\newtheorem{thm}{Theorem}[section]
\newtheorem{cor}[thm]{Corollary}
\newtheorem{lem}[thm]{Lemma}
\newtheorem{prop}[thm]{Proposition}
\theoremstyle{definition}
\newtheorem{defn}[thm]{Definition}
\theoremstyle{remark}
\newtheorem{rem}[thm]{Remark}
\newtheorem{ntn}[thm]{Notation}
\newtheorem{hyp}[thm]{Hypothesis}
\newtheorem{num}[thm]{}
\def\vp{{\varphi}}
\def\fc{{\scriptstyle \circ}}
\def\Aut{{\rm Aut}}
\def\Inj{{\rm Inj}}
\def\Syl{{\rm Syl}}
\def\Img{{\rm Im}}
\def\Inn{{\rm Inn}}
\def\Out{{\rm Out}}
\def\Hom{{\rm Hom}}
\def\Rep{{\rm Rep}}
\def\Gs{{\mathscr G}}
\def\Ms{{\mathscr M}}
\def\Ps{{\mathscr P}}
\def\Aa{{\mathcal A}}
\def\Bb{{\mathcal B}}
\def\Cc{{\mathcal C}}
\def\Dd{{\mathcal D}}
\def\Ee{{\mathcal E}}
\def\Ff{{\mathcal F}}
\def\Gg{{\mathcal G}}
\def\Pp{{\mathcal P}}
\def\wBb{\widehat{\mathcal B}}
\def\wCc{\widehat{\mathcal C}}
\def\wFf{\widehat{\mathcal F}}
\def\wG{\widehat{G}}
\def\tG{\widetilde{G}}
\begin{document}

\title[Saturated fusion systems with parabolic families]{Saturated fusion systems with parabolic families}

\author{Silvia Onofrei}
\address{Department of Mathematics, The Ohio State University, 100 Mathematics Tower, 231 West $18^{\rm th}$ Avenue, Columbus, Ohio 43210, USA}
\email{onofrei@math.ohio-state.edu}
\date{Wednesday, 31 August 2011}
\subjclass{20J15, 20E42}
\thanks{This project was partially supported by the National Security Agency under grant number H98230-10-1-0174.}

\maketitle

\begin{abstract}
Let $G$ be group; a finite $p$-subgroup $S$ of $G$ is a Sylow $p$-subgroup if every finite $p$-subgroup of $G$ is conjugate to a subgroup of $S$. In this paper, we examine the relations between the fusion system over $S$ which is given by conjugation in $G$ and a certain chamber system $\Cc$, on which $G$ acts chamber transitively with chamber stabilizer $N_G(S)$.

Next, we introduce the notion of a fusion system with a parabolic family and we show that a chamber system can be associated to such a fusion system. We determine some conditions the chamber system has to fulfill in order to assure the saturation of the underlying fusion system. We give an application to fusion systems with parabolic families of classical type.
\end{abstract}

\section{Introduction}

In the present paper we elaborate on the connections between fusion systems, chamber systems and parabolic systems.

A fusion system $\Ff$ is a category whose objects are the subgroups of a finite $p$-group $S$; the morphisms are monomorphisms between these subgroups, such that all monomorphisms induced by conjugation in $S$ are included. The saturated fusion systems satisfy extra conditions which model properties of the fusion in a finite group related to the Sylow $p$-subgroups. The first thorough study of fusion systems and saturated fusion systems is due to Puig; see the more recent \cite{puig06}, for example. Applications to algebraic topology came from the introduction of the notion of $p$-local finite groups, having centric linking systems, by Broto, Levi and Oliver \cite{blo3}. The name saturated fusion system is also due to Broto, Levi and Oliver.

Chamber systems were introduced by Tits \cite{titslocal} in the study of local properties of buildings. All chamber systems used in this paper can be considered as simplicial complexes, with the chambers being simplices of maximal dimension. The codimension one faces are assigned types labeled by elements in an index set $I$, and correspond to the panels of the chamber system. Each chamber has $|I|$ faces; two chambers are $i$-adjacent if they have a face of type $i$ in common.

Let $G$ be a chamber transitive group of automorphisms of a chamber system, and let $B$ denote the stabilizer of a chamber and the $G_i, i \in I$ denote its panel stabilizers. If the chamber system is a building, then $B$ is the Borel subgroup of $G$ and $G_i$ are the minimal parabolic subgroups. The family $(B, G_i \;; i \in I)$ forms a parabolic system of rank $|I|$ if $G = \langle G_i\;; i \in I \rangle$ and no proper subset $\{ G_j\;; j \in J \varsubsetneq I \}$ generates $G$.

We now describe the approach and the results contained in this paper. Let $\Aa$ be a diagram of finite groups $(B, G_i, G_{ij}; i, j \in I)$ together with injective group homomorphisms $B \hookrightarrow G_i$ and $G_i \hookrightarrow G_{ij}$, for all $i, j \in I$, such that all squares commute. Assume that $G$ is a faithful completion of $\Aa$. Our first result Theorem \ref{satfs} asserts that, under certain assumptions, the group $G$ has a finite Sylow $p$-subgroup $S$ and the fusion system $\Ff_S(G)$ associated to $G$ is saturated. This theorem is in some sense a generalization of a result due to Broto, Levi and Oliver \cite[Theorem 4.2]{blo4} regarding the fusion system associated to the colimit of a finite tree of finite groups. Although our proof is written in terms of chamber systems, the line of thought closely follows the one given in \cite{blo4}.

We introduce the notion of a parabolic family for a fusion system $\Ff$ over a finite $p$-group $S$. Roughly speaking it consists of a collection of saturated constrained fusion subsystems $\{ \Ff_i, i \in I\}$ which all contain $\Bb$, the normalizer fusion subsystem $N_{\Ff}(S)$, and with the additional property that each subsystem $\Ff_{ij} = \langle \Ff_i, \Ff_j \rangle$ is also saturated and constrained. We construct a certain group $G$, determined by $p'$-reduced $p$-constrained finite groups that realize the constrained fusion systems $\Bb, \Ff_i$ and $\Ff_{ij}$. A chamber system $\Cc$ over $I$ can be associated to $\Ff$ and $G$. Our second main result is Theorem \ref{mythm} which states that under a couple of assumptions, $\Ff$ is the saturated fusion system of $G$ over $S$. For example, we assume that $\Cc^P$, the fixed point set under the action of a $p$-subgroup $P$ of $G$, is connected.

The third achievement of the paper is Theorem \ref{cchat}. This is a reduction result which says that a fusion system $\Ff$ with a parabolic family contains a certain normal fusion subsystem denoted $\wFf$, which also has a parabolic family. If the hypotheses from Theorem \ref{mythm} hold in $\wFf$, then this fusion system is saturated and realized by a normal subgroup of $G$. Also, there is a $2$-covering $\wCc \rightarrow \Cc$ between the associated chamber systems.

Assume now that the subsystems $\Ff_i$ and $\Ff_{ij}$ are realized by suitably chosen $p'$-reduced $p$-constrained extensions of finite groups of Lie type $G_i$ and $G_{ij}$ in characteristic $p$, of rank one and two respectively. In this case, a type (or diagram) $\Ms$ can be associated to $\Ff$. This is a graph whose vertices are labeled by the elements of $I$, the restriction to the nodes $i$ and $j$ is the Coxeter diagram corresponding to $G_{ij}$. Proposition \ref{mtype} generalizes to fusion systems a well known result of Timmesfeld \cite[3.1]{timm85rev}. Specifically, if the type $\Ms$ is classical, $\Ff$ is the fusion system of a group of Lie type extended by diagram and field automorphisms.

\subsection*{Outline of the paper}
We start with a review of some basic results on fusion systems. In Section 3, we recall the standard terminology on chamber systems and parabolic systems. In Section 4 we prove Theorem \ref{satfs}, while in Section 5 we prove Theorem \ref{mythm}. Section 6 is dedicated to the proof of Theorem \ref{cchat}. We finish our paper with an application to chamber systems of type $\Ms$ and their generalization to fusion systems.

\subsection*{Acknowledgements}
We would like to thank the anonymous referee for a very thorough reading of the paper and for numerous suggestions leading to significant improvements in the paper.

\section{Recollections on fusion systems}

For an introduction to fusion systems see \cite{blo2} and \cite{link07}. We review here the basic definitions and a few results needed in the paper.

\subsection*{Background and terminology}

Given two groups $P$ and $Q$, let $\Hom(P,Q)$ denote the set of group homomorphisms from $P$ to $Q$ and let $\Inj(P,Q)$ denote the subset of monomorphisms.

If $P$ and $Q$ are subgroups of a group $G$, then $c_g:P \rightarrow Q$ denotes the map $x \mapsto gxg^{-1}$ induced by conjugation by $g \in G$. We write $^g P = gPg^{-1}$ and $P^g = g^{-1}Pg$. The transporter set of $P$ into $Q$ is $N_G(P,Q) = \lbrace g \in G |\; ^gP \leq Q \rbrace$. Also let $\Hom_G(P,Q) = N_G(P,Q)/C_G(P)$
denote the set of group homomorphisms from $P$ into $Q$ induced by conjugation in $G$. Set $\Aut_G(P) =\ N_G(P)/C_G(P)$ and $\Out_G(P) = \Aut_G(P)/\Inn(P)$.

\begin{defn}\label{dfs}
A {\it fusion system} $\Ff$ over a finite $p$-group $S$ is a category whose objects are the subgroups of $S$ and whose morphism sets $\Hom_{\Ff}(P,Q)$ satisfy the following two conditions:
\vspace{-.2cm}
\begin{list}{\upshape\bfseries}
{\setlength{\leftmargin}{1cm}
\setlength{\labelwidth}{1cm}
\setlength{\labelsep}{0.2cm}
\setlength{\parsep}{0cm}
\setlength{\itemsep}{0cm}}
\item[${\rm(i).}$] $\Hom_S(P,Q) \subseteq \Hom_{\Ff}(P,Q) \subseteq \Inj(P,Q)$;
\item[${\rm(ii).}$] Every $\Ff$-morphism factors as an $\Ff$-isomorphism followed by an inclusion.
\end{list}
\end{defn}

\begin{defn}\label{fsubgroups}
Let $\Ff$ be a fusion system over a finite $p$-group $S$. A subgroup $P$ of $S$ is
\vspace{-.2cm}
\begin{list}{\upshape\bfseries}
{\setlength{\leftmargin}{1cm}
\setlength{\labelwidth}{1cm}
\setlength{\labelsep}{0.2cm}
\setlength{\parsep}{0cm}
\setlength{\itemsep}{0cm}}
\item[${\rm(i).}$] {\it fully $\Ff$-centralized} if $|C_S( P)|\ge|C_ S({\varphi(P)})|$, for all $\varphi \in \Hom_\Ff(P,S)$;
\item[${\rm(ii).}$] {\it fully $\Ff$-normalized} if $|N_S(P)|\ge|N_ S({\varphi(P)})|$, for all $\varphi \in \Hom_\Ff(P,S)$;
\item[${\rm(iii).}$] {\it $\Ff$-centric} if $C_S(\varphi(P)) = Z(\varphi(P))$ for all $\varphi\in\Hom_\Ff(P,S)$;
\item[${\rm(iv).}$] {\it $\Ff$-radical} if $\Inn(P)=O_p(\Aut_{\Ff}(P))$;
\item[${\rm(v).}$] {\it $\Ff$-essential} if $Q$ is $\Ff$-centric and $\Out_\Ff(P)$
has a strongly $p$-embedded\footnote{A proper subgroup $M$ of $\Out_{\Ff}(P)$ is strongly $p$-embedded if $M$ contains a Sylow $p$-subgroup $Q$ of $\Out_\Ff(P)$ such that
$Q \ne 1$ and $^\varphi Q\cap Q=\{1\}$ for every $\varphi\in\,\Out_\Ff(P)\setminus M$.} subgroup.
\end{list}
\end{defn}

\begin{num}\label{nphi}
For $P \leq S$ and $\varphi \in \Hom_{\Ff}(P, S)$ define: $N_\varphi=\{ x \in N_ S (P) \, | \, \varphi \fc c_x \fc \varphi^{-1}\in \Aut _S(\varphi(P)) \} \,$. It is always the case that $P C_ S(P)\le N_\varphi \leq N_S(P)$.
\end{num}

\begin{defn}\cite{blo4}\label{dsfs}
The fusion system $\Ff$ over a finite $p$-group $S$ is {\it saturated} if the following two conditions hold.
\vspace*{-.2cm}
\begin{list}{\upshape\bfseries}
{\setlength{\leftmargin}{1cm}
\setlength{\labelwidth}{1cm}
\setlength{\labelsep}{0.2cm}
\setlength{\parsep}{0cm}
\setlength{\itemsep}{0cm}}
\item[${\rm(I).}$] For all $P \le S$ which are fully $\Ff$-normalized, $P$ is fully $\Ff$-centralized and $\Aut_S(P)$ is a Sylow $p$-subgroup of $\Aut _{\Ff}(P)$.
\item[${\rm(II).}$] If $P \le S$ and $\varphi \in \Hom_{\Ff}(P,S)$ are such that $\varphi(P)$ is fully $\Ff$-centralized, then there is a morphism $\widehat{\varphi} \in \Hom_{\Ff}(N_{\varphi}, S)$ such that $\widehat{\varphi}|_P=\varphi$.
\end{list}
\end{defn}

\begin{num}
Let $\Ff_i$, $i=1,2$ be fusion systems over subgroups $S_i$ of a $p$-group $S$. We say $\Ff_1$ is a {\it fusion subsystem} of $\Ff_2$ and write $\Ff_1 \subseteq \Ff_2$ if $S_1 \leq S_2$ and $\Hom_{\Ff_1}(P,Q) \subseteq \Hom_{\Ff_2}(P,Q)$ for all subgroups $P$ and $Q$ of $S$.
\end{num}

\begin{num}\label{nmlfs}
Let $\Ee$ and $\Ff$ be fusion systems over $S$ with $\Ee \subseteq \Ff$. We say that {\it $\Ee$ is normal in $\Ff$} and write $\Ee \trianglelefteq \Ff$, if for every isomorphism $\vp: P \rightarrow P'$ in $\Ff$ and subgroups $Q, Q'$ of $P$ we have $\vp_{|Q'} \fc \Hom_{\Ee}(Q, Q') \fc \vp^{-1}_{|\vp(Q)} \subseteq \Hom_{\Ee}(\vp(Q), \vp(Q'))$.
\end{num}

\begin{num}
Assume that $S$ is a finite $p$-group and for each $i = 1, \ldots n$ we are given subgroups $S_i \le S$ and fusion systems $\Ff_i$ over $S_i$. Define $\Ff = \langle \Ff_i\;;  i \in I \rangle$ to be the fusion system generated by $\{\Ff_i\;; i \in I \}$, which is the smallest fusion system over $S$ containing each member of the given collection. The fusion system generated by two saturated fusion systems need not be saturated.
\end{num}

\begin{defn}\label{norcenfs}
Let $\Ff$ be a fusion system over a finite $p$-group $S$ and let $P$ be a subgroup of $S$. The {\it normalizer of} $P$ {\it in} $\Ff$ is the fusion system $N_{\Ff}( P)$ on $N_S(P)$ having as morphisms all group homomorphisms $\varphi: Q \rightarrow R$, for $Q, R$ subgroups of $N_S(P)$, for which there exists a morphism $\widehat{\varphi}: PQ \rightarrow PR$ in $\Ff$ satisfying $\widehat{\varphi}(P)=P$ and $\widehat{\varphi}|_Q=\varphi$. If $\Ff = N_{\Ff}(P)$ then we say that $P$ is normal in $\Ff$ and we write $P \trianglelefteq \Ff$. In a saturated fusion system $\Ff$, if $P$ is
fully $\Ff$-normalized then $N_{\Ff}( P)$ is a saturated fusion system on $N_ S(P)$; for a proof of this statement see \cite[Theorem 3.2]{link07} for example.
\end{defn}

\begin{ntn}\label{op}
Let $O_p(\Ff)$ denote the largest normal $p$-subgroup in $\Ff$. It is a standard result that $O_p(\Ff)$ is contained in every $\Ff$-centric $\Ff$-radical subgroup of $S$.
\end{ntn}

\begin{num}\label{alperin}
Alperin's fusion theorem and its refinement Alperin-Goldschmidt theorem hold for saturated fusion systems. The latter is the statement that every morphism in $\Ff$ is a composite of restrictions of automorphisms of $S$, and of automorphisms of fully $\Ff$-normalized $\Ff$-essential subgroups of $S$; see \cite[Theorem 2.8]{stancu06} for a proof. Note that an $\Ff$-essential subgroup has to be a proper subgroup of $S$ because $\Out _{\Ff} (S) = \Aut_{\Ff}(S)/\Aut_S(S)$ is a $p'$-group.
\end{num}

\subsection*{Fusion systems realized by finite groups}\label{fsfingrp}

It is known, due to the work of Leary and Stancu \cite{ls} and Robinson \cite{rob07} that every fusion system on a finite $p$-group $S$ is equal to the fusion system associated to a group $G$ with Sylow $p$-subgroup $S$. In what follows we discuss the relationship between fusion systems and their associated groups.

\begin{num}
We say that a group {\it $G$ has a (finite) Sylow $p$-subgroup} $S$, if $S$ is a finite $p$-subgroup of $G$ and if every finite $p$-subgroup of $G$ is conjugate to a subgroup of $S$. To a group $G$ with Sylow $p$-subgroup $S$ we can associate a category $\Ff_S(G)$ whose objects are the subgroups of $S$ and whose morphisms are $\Hom_{\Ff}(P, Q) = \Hom_G(P,Q)$ whenever $P$ and $Q$ are subgroups of $S$.
\end{num}

\begin{num}
Notice that in general if $G$ has a Sylow $p$-subgroup, it does not follow that a subgroup $H$ of $G$ also has a Sylow $p$-subgroup, even if $H$ is normal in $G$. However, every subgroup $H$ of $G$ contains a maximal normal $p$-subgroup, which we shall denote $O_p(H)$.
\end{num}

\begin{defn}\label{drealised}
A fusion system $\Ff$ is {\it realized} by a group $G$ (not necessarily finite) if $G$ contains $S$ as a Sylow $p$-subgroup and $\Ff=\Ff_S(G)$.
\end{defn}

\begin{num}\label{fgsatfs}
The classical examples of saturated fusion systems are the ones coming from finite groups. If $G$ is a finite group and $S$ is a Sylow $p$-subgroup of $G$, then $\Ff _S(G)$ is a saturated fusion system. A subgroup $P \leq S$ is fully $\Ff_S(G)$-centralized if and only if $C_S(P) \in \Syl_p(C_G(P))$, while $P$ is fully $\Ff_S(G)$-normalized if and only if $N_S(P) \in \Syl_p(N_G(P))$. For proofs see \cite[Proposition 1.3]{blo2}. Further, a subgroup $P \leq S$ is $\Ff$-centric if and only if $P$ is $p$-centric in $G$, and $P$ is $\Ff$-radical if and only if $O_p(N_G(P)/PC_G(P))=1$.
\end{num}

There are examples of fusion systems that cannot be realized by a finite group, they are called exotic. On the other side, there are examples where one can always construct a finite group with $p$-local structure equivalent to the given fusion system. This is the case for constrained fusion systems. The fusion system $\Ff$ is said to be {\it constrained} if $O_p(\Ff)$ is $\Ff$-centric. We first mention a useful property of the saturated constrained fusion systems.

\begin{prop}\label{constrcen}
Let $\Ff$ be a saturated fusion system over a $p$-group $S$, and let $P$ be a fully $\Ff$-normalized subgroup of $S$. If $N_{ \Ff}( P)$ is constrained then $O_p ( N_{ \Ff}( P))$ is $\Ff$-centric.
\end{prop}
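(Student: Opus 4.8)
The plan is to reduce the $\Ff$-centricity of $Q := O_p(N_\Ff(P))$ to its centricity inside the normalizer system $\Es := N_\Ff(P)$ (which is exactly what the constrained hypothesis gives), and then to transport that information into $\Ff$ by means of an extension supplied by the assumption that $P$ is fully $\Ff$-normalized. Write $T = N_S(P)$; then $\Es$ is a saturated fusion system over $T$, and ``$\Es$ constrained'' means precisely that $Q = O_p(\Es)$ is $\Es$-centric, i.e. $C_T(\vp(Q)) = Z(\vp(Q))$ for every $\vp \in \Hom_\Es(Q,T)$. Since $P \trianglelefteq \Es$ we have $P \le Q$, and because $P \trianglelefteq T$ this yields $P \trianglelefteq Q$; this containment will be used throughout.

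First I would record the standard reformulation that $Q$ is $\Ff$-centric precisely when $C_S(Q') = Z(Q')$ for every $\Ff$-conjugate $Q'$ of $Q$, together with the observation that it suffices to check this identity at a \emph{single} fully $\Ff$-centralized conjugate $R$. Indeed, if $R$ is fully $\Ff$-centralized and $C_S(R) = Z(R)$, then for any conjugate $Q'$ one has $|Z(Q')| = |Z(R)| = |C_S(R)| \ge |C_S(Q')| \ge |Z(Q')|$, forcing $C_S(Q') = Z(Q')$. Thus the whole task reduces to exhibiting one fully $\Ff$-centralized conjugate $R$ of $Q$ (such an $R$ exists in any $\Ff$-conjugacy class) and verifying the centric identity there.

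Now I would fix such an $R$ together with an $\Ff$-isomorphism $\alpha \colon Q \to R$, and set $P^* = \alpha(P)$. From $P \trianglelefteq Q$ we get $P^* \trianglelefteq R$, so that $R \le N_S(P^*)$ and $C_S(R) \le C_S(P^*) \le N_S(P^*)$. As $P^*$ is $\Ff$-conjugate to the fully $\Ff$-normalized subgroup $P$, the saturation of $\Ff$ furnishes (by the standard extension result for fully normalized subgroups) an $\Ff$-morphism $\beta \colon N_S(P^*) \to N_S(P) = T$ with $\beta(P^*) = P$. The pivotal point is that the composite $\beta\alpha \colon Q \to R'$, where $R' := \beta(R) \le T$, is in fact a morphism of $\Es = N_\Ff(P)$: it takes values in $T$, it satisfies $\beta\alpha(P) = \beta(P^*) = P$, and since $P \le Q$ and $P \le R'$ one may take the extension required in Definition \ref{norcenfs} to be $\beta\alpha$ itself. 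Hence $R'$ is $\Es$-conjugate to $Q$, therefore $\Es$-centric, and in particular $C_T(R') = Z(R')$.

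Finally I would descend from $R'$ back to $R$. Because $\beta$ is injective and carries $C_S(R) \le N_S(P^*)$ into elements of $T$ centralizing $R' = \beta(R)$, we obtain $\beta(C_S(R)) \le C_T(R') = Z(R')$, so $|C_S(R)| \le |Z(R')| = |Z(R)|$; together with $Z(R) \le C_S(R)$ this gives $C_S(R) = Z(R)$, and the reduction of the second paragraph then finishes the argument. I expect the genuine obstacle to be the identification of $\beta\alpha$ as an $\Es$-morphism: everything hinges on arranging the conjugating data so that $P$ is literally fixed rather than merely sent to a conjugate, which is exactly what full $\Ff$-normalization of $P$ provides through $\beta$. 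Once this is in place, checking $P \le R' \le T$ and the extension clause of Definition \ref{norcenfs} is routine.
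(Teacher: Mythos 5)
Your proof is correct and follows essentially the same route as the paper's: both hinge on the extension morphism $N_S(\vp(P))\to N_S(P)$ supplied by full $\Ff$-normalization of $P$, the recognition of the composite as a morphism of $N_\Ff(P)$ fixing $P$, and the constrained hypothesis giving $C_{N_S(P)}(Q)\le Q$. Your preliminary reduction to a fully $\Ff$-centralized representative is harmless but superfluous, since your final descent step ($\beta(C_S(R))\le Z(R')$, hence $C_S(R)=Z(R)$) already works at an arbitrary $\Ff$-conjugate of $Q$, which is exactly how the paper argues.
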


\begin{proof}
Let $P$ be a fully $\Ff$-normalized subgroup of $S$. Set $Q=O_p ( N_{ \Ff}( P))$  and let $\vp \in \Hom_{\Ff}(Q,S)$. As $P$ is fully $\Ff$-normalized, there exists an $\Ff$-morphism $\phi: N_S(\vp(P)) \rightarrow N_S(P)$; see \cite[Lemma 2.6]{link07}. Then $\phi \fc \vp$ maps $P$ to $P$ and maps $Q$ into $N_S(P)$, since $\vp(Q) \le N_S(\vp(P))$. But $Q=O_p(N_{\Ff}(P))$ and $\phi \fc \vp$ fixes $Q$. Since $C_S(\vp(Q)) \le N_S(\vp(P))$, we get $\phi(C_S(\vp(Q))) \le C_S(Q)$. As $N_{\Ff}(P)$ is constrained, $Q$ is centric in $N_{\Ff}(P)$, so $\phi(C_S(\vp(Q))) \le Q$. Then $C_S(\vp(Q)) \le \phi^{-1}(Q) = \vp(Q)$.
\end{proof}

Any constrained fusion system was proven to come from a finite group by Broto, Castellana, Grodal, Levi and Oliver.

\begin{thm}\cite[Theorem 4.3]{bcglo1}\label{thmconstraint}
Let $\Ff$ be a saturated constrained fusion system on a finite $p$-group $S$ and set $Q = O_p(\Ff)$. Then there exists a unique up to isomorphism, finite group $G$ having $S$ as a Sylow $p$-subgroup and such that $Q\trianglelefteq G$, $C_G(Q)=Z(Q)$ and $\Ff= \Ff_S (G)$. Furthermore $G/Z(Q)\simeq\Aut_{\Ff}(Q)$.
\end{thm}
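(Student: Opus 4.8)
The plan is to prove this as the standard model theorem for constrained fusion systems, via the Eilenberg--MacLane obstruction theory for group extensions with non-abelian kernel. First I would record the setup. Since $\Ff$ is constrained, $Q = O_p(\Ff)$ is $\Ff$-centric, and being normal in $\Ff$ it is normal in $S$ and fully $\Ff$-normalized; centricity gives $C_S(Q) = Z(Q)$. Write $A = \Aut_{\Ff}(Q)$ and $\bar A = \Out_{\Ff}(Q) = A/\Inn(Q)$. Conjugation embeds $A \hookrightarrow \Aut(Q)$, hence $\bar A \hookrightarrow \Out(Q)$, so the induced outer action is faithful. By saturation (Definition \ref{dsfs}(I)) applied to the fully normalized subgroup $Q$, the group $\Aut_S(Q) = S/Z(Q)$ is a Sylow $p$-subgroup of $A$; passing to the quotient by $\Inn(Q) = Q/Z(Q)$ shows $S/Q$ is a Sylow $p$-subgroup of $\bar A$.

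The heart of the argument is existence. I would look for a finite group $G$ fitting in an extension $1 \to Q \to G \to \bar A \to 1$ that induces the given outer action $\psi \colon \bar A \to \Out(Q)$. By the classical theory, such an extension with self-centralizing kernel $Q$ exists if and only if a canonical class $\omega \in H^3(\bar A; Z(Q))$ vanishes, and when it does the set of such extensions is a torsor under $H^2(\bar A; Z(Q))$. The key reduction is that $Z(Q)$ is a finite abelian $p$-group, so that for each $n$ the restriction $H^n(\bar A; Z(Q)) \to H^n(S/Q; Z(Q))$ is split injective (transfer: $\mathrm{cor} \fc \mathrm{res}$ is multiplication by the prime-to-$p$ index $[\bar A : S/Q]$). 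The restriction of $\omega$ to the Sylow subgroup $S/Q$ is the obstruction of the extension $1 \to Q \to S \to S/Q \to 1$, which vanishes because $S$ itself realizes $\psi|_{S/Q}$; hence $\omega = 0$. Within the $H^2(\bar A; Z(Q))$-torsor I would then single out, using the same injectivity on $H^2$, the unique extension whose restriction over $S/Q$ is the class of $S$. For this $G$ the preimage of $S/Q$ is isomorphic to $S$, so $S \in \Syl_p(G)$; faithfulness of $\psi$ forces $C_G(Q) = Z(Q)$, and therefore $G/Z(Q) = G/C_G(Q) \cong \Aut_G(Q) = A = \Aut_{\Ff}(Q)$, which is also the final assertion.

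It remains to identify $\Ff$ with $\Ff_S(G)$ and to prove uniqueness. Because $Q \trianglelefteq G$ with $C_G(Q) = Z(Q)$, the system $\Ff_S(G)$ is constrained with $O_p(\Ff_S(G)) = Q$ and $\Aut_{\Ff_S(G)}(Q) = \Aut_G(Q) = A$, matching $\Aut_{\Ff}(Q)$ as subgroups of $\Aut(Q)$. For a constrained system with normal centric $Q$, every morphism extends over $Q$ (since $Q$ centric and normal forces $Q \le N_{\vp}$, so the extension axiom applies), and hence by the Alperin--Goldschmidt theorem \ref{alperin} the whole system is determined by the subgroup $A \le \Aut(Q)$ together with the embedding $S/Z(Q) \le A$; this gives $\Ff = \Ff_S(G)$. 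For uniqueness, any group $G'$ with the stated properties is an extension of $\bar A$ by $Q$ realizing the same faithful outer action with $C_{G'}(Q) = Z(Q)$, and since $S \in \Syl_p(G')$ its class restricts to that of $S$ over $S/Q$; injectivity of $H^2(\bar A; Z(Q)) \to H^2(S/Q; Z(Q))$ forces it to coincide with the class selected above, so $G' \cong G$.

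The main obstacle I anticipate is the cohomological bookkeeping of the second and third paragraphs: making the obstruction class $\omega$ and the extension-classifying torsor precise for a non-abelian kernel, and verifying that the reduction to the Sylow $p$-subgroup $S/Q$ is compatible with all the chosen embeddings, so that the extension picked out over $S/Q$ really is $S$ rather than some other extension inducing the same outer action. Everything else is formal once this identification of the Sylow-level data is pinned down.
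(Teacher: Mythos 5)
First, a point of reference: the paper does not prove this statement at all --- it is quoted from \cite[Theorem 4.3]{bcglo1} and used as a black box, so there is no internal proof to compare against. Your sketch does follow the strategy of the cited source (Eilenberg--MacLane obstruction theory for extensions with non-abelian kernel $Q$, reduced to the Sylow subgroup by a transfer argument), and your treatment of the $H^3$ obstruction is correct: restriction to $S/Q$ is injective because $Z(Q)$ is a finite abelian $p$-group and $[\overline{A}:S/Q]$ is prime to $p$, and the restricted obstruction vanishes because $S$ realizes $\psi|_{S/Q}$.

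The genuine gap is in the next step. The set of extensions of $\overline{A}$ by $Q$ realizing $\psi$ is a torsor under $H^2(\overline{A};Z(Q))$, and restriction to $S/Q$ is an injective map of torsors, equivariant over $\mathrm{res}$; injectivity therefore yields \emph{at most one} extension whose restriction over $S/Q$ is the class of $S$, but it does not yield that such an extension \emph{exists}. Concretely, if $G_0$ is any extension and $E_0$ is the preimage of $S/Q$ in $G_0$, you must show that the difference class $[S]-[E_0]\in H^2(S/Q;Z(Q))$ lies in the image of $\mathrm{res}$ from $H^2(\overline{A};Z(Q))$. The transfer does not do this for you, since $\mathrm{res}\fc\mathrm{cor}$ is governed by the double coset formula and is not multiplication by the index (only $\mathrm{cor}\fc\mathrm{res}$ is); by the Cartan--Eilenberg stable element theorem one must instead verify that $[S]-[E_0]$ is stable under $\overline{A}$-fusion, and it is exactly here that the saturation of $\Ff$ --- in particular the extension axiom (II) of Definition \ref{dsfs}, applied to elements of $\Aut_{\Ff}(Q)$ --- has to be invoked. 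Your argument never uses axiom (II) for the existence step, and without it the construction does not close. Two smaller inaccuracies: the justification that ``$Q$ centric and normal forces $Q\le N_{\vp}$'' is wrong, since $N_{\vp}\le N_S(P)$ and $Q$ need not normalize $P$; the extension of a morphism over $Q$ comes instead from the definition of $Q\trianglelefteq\Ff$, i.e.\ $\Ff=N_{\Ff}(Q)$. And the assertion that Alperin--Goldschmidt reduces $\Ff$ to the data of $A\le\Aut(Q)$ together with $S/Z(Q)\le A$ conceals the nontrivial lemma that for $Q\le P\le S$ the restriction $\Aut_{\Ff}(P)\to\Aut_{\Ff}(Q)$ is injective with identifiable image, which again requires centricity of $Q$ and a Sylow argument rather than being formal.
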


A stronger uniqueness property was shown in \cite[2.5]{afs1}. It asserts that if $G_1$ and $G_2$, two $p'$-reduced $p$-constrained finite groups, realize a saturated constrained fusion system $\Ff$, then there is an isomorphism $\vp: G_1 \rightarrow G_2$ with $\vp_{|S} = {\rm Id}_S$.

A result from elementary group theory which will be useful later.

\begin{lem}\label{pprimep}
Suppose $G$ is $p'$-reduced $p$-constrained finite group and let $S$ be a Sylow $p$-subgroup of $G$. If $H$ is an overgroup of $S$ in $G$ then $H$ is $p'$-reduced $p$-constrained.
\end{lem}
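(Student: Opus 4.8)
The plan is to collapse both hypotheses on $G$ into the single inclusion $C_G(O_p(G)) \le O_p(G)$, and then to push the characteristic subgroup $Q := O_p(G)$ down into $H$, where it will govern both $O_{p'}(H)$ and $C_H(O_p(H))$. Recall that being $p'$-reduced means $O_{p'}(G) = 1$, so that $O_{p',p}(G) = O_p(G) = Q$; then $p$-constraint reads $C_G(Q) \le Q$, i.e. $C_G(Q) = Z(Q)$. This is the only property of $G$ I will invoke. The first step is to observe that $Q$ descends to a normal $p$-subgroup of $H$: since $S \in \Syl_p(G)$ and $Q \trianglelefteq G$ is a $p$-group, we have $Q \le S \le H$; and $Q$, being normal in $G$, is in particular normalized by $H$, so $Q \trianglelefteq H$ and hence $Q \le O_p(H)$.

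Next, to see that $H$ is $p'$-reduced, put $K = O_{p'}(H)$. As $K$ and $Q$ are normal subgroups of $H$ of coprime order, $[K,Q] \le K \cap Q = 1$, so $K \le C_H(Q) \le C_G(Q) = Z(Q) \le Q$; a $p'$-group sitting inside the $p$-group $Q$ must be trivial, whence $O_{p'}(H) = 1$.

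Finally, write $R = O_p(H)$. Since $O_{p'}(H) = 1$ we have $O_{p',p}(H) = R$, so it remains only to verify $C_H(R) \le R$. Because $Q \le R$, any element centralizing $R$ centralizes $Q$, whence $C_H(R) \le C_H(Q) \le C_G(Q) = Z(Q) \le Q \le R$; thus $C_H(R) = Z(R)$ and $H$ is $p$-constrained.

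The argument is essentially mechanical once $C_G(O_p(G)) \le O_p(G)$ has been isolated. The only step demanding any care is the first one, namely confirming that $O_p(G)$ really is a normal $p$-subgroup of $H$ (so that it lands inside $O_p(H)$), after which the monotonicity $C_H(R) \le C_H(Q)$ and the coprime commutator identity $[K,Q] \le K \cap Q$ do all the work. I therefore anticipate no genuine obstacle, consistent with the lemma being flagged as elementary group theory.
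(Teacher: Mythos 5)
Your proof is correct and rests on exactly the same key fact the paper invokes, namely that being $p'$-reduced $p$-constrained is equivalent to $C_G(O_p(G)) \le O_p(G)$; you simply write out in full the routine verification (that $O_p(G)$ descends into $O_p(H)$ and forces both $O_{p'}(H)=1$ and $C_H(O_p(H)) \le O_p(H)$) which the paper leaves as a "direct consequence."
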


\begin{proof} The Lemma is a direct consequence of the fact that a group $G$ is $p'$-reduced $p$-constrained if and only if $C_G(O_p(G)) \le O_p(G)$.
\end{proof}

In our construction, we will also use the following result due to Aschbacher.

\begin{prop}\cite[1.1]{afs3}\label{subconstr}
Let $\Ff$ be a saturated constrained fusion system on a finite $p$-group $S$ and let $G$ be a finite group that realizes $\Ff$. Assume that $\Ee$ is a saturated constrained subsystem of $\Ff$ on $S$. Then there exists an overgroup $H$ of $S$ in $G$ with $\Ee=\Ff_S(H)$.
\end{prop}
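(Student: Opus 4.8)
The plan is to reduce to the normalizer of $R:=O_p(\Ee)$ and then realize $\Ee$ as the preimage of $\Aut_\Ee(R)$ in a suitable finite group, identifying the result via the uniqueness of models for constrained systems.

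First I would record the basic properties of $R=O_p(\Ee)$. Since $R$ is normal in $S$ it is fully $\Ff$-normalized, so by Definition \ref{norcenfs} the system $N_\Ff(R)$ is saturated on $N_S(R)=S$. As $\Ee$ is constrained, $R$ is $\Ee$-centric, whence $C_S(R)=Z(R)$; because every morphism of $N_\Ff(R)$ fixes $R$, this shows $R$ is $N_\Ff(R)$-centric, i.e.\ $N_\Ff(R)$ is constrained with $O_p(N_\Ff(R))=R$. Proposition \ref{constrcen} then gives that $R$ is $\Ff$-centric. Since $R$ is fully $\Ff$-normalized, \ref{fgsatfs} yields $S=N_S(R)\in\Syl_p(N_G(R))$, so $N_G(R)$ realizes $N_\Ff(R)$ over $S$; and as $R\trianglelefteq\Ee$, every $\Ee$-morphism extends inside $\Ee$ to one fixing $R$, so $\Ee=N_\Ee(R)\subseteq N_\Ff(R)$. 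Replacing $(\Ff,G)$ by $(N_\Ff(R),N_G(R))$, I may thus assume $R=O_p(\Ff)=O_p(\Ee)$ is $\Ff$-centric and $R\trianglelefteq G$.

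Next I would exhibit the candidate subgroup. With $R$ $\Ff$-centric we have $C_G(R)=Z(R)\times O_{p'}(C_G(R))$, and one checks $O_{p'}(C_G(R))=O_{p'}(G)$. The essential point is to kill this $p'$-part: in the setting where the result is applied $G$ is $p'$-reduced $p$-constrained, and then Lemma \ref{pprimep}, applied to the overgroup $S\le N_G(R)=G$, shows $G$ is again $p'$-reduced $p$-constrained, so $O_{p'}(C_G(R))=O_{p'}(G)=1$ and $C_G(R)=Z(R)$. Now put $A=\Aut_\Ee(R)\le\Aut_\Ff(R)=G/C_G(R)=G/Z(R)$; since $\Ee$ is a system over $S$ we have $\Aut_S(R)\le A$, and saturation of $\Ee$ gives $\Aut_S(R)\in\Syl_p(A)$. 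Let $H$ be the preimage of $A$ in $G$. Then $S\le H$, $C_H(R)=Z(R)$, $R\trianglelefteq H$ with $H/Z(R)\cong A$, and (using that $R=O_p(\Ee)$ is $\Ee$-radical, so $O_p(A)=\Inn(R)$) one gets $O_p(H)=R$; hence $H$ is $p'$-reduced $p$-constrained with Sylow $p$-subgroup $S$.

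It remains to prove $\Ff_S(H)=\Ee$, which is the crux. The inclusion $\Ee\subseteq\Ff_S(H)$ is direct: any $\Ee$-morphism extends, inside $\Ee\subseteq\Ff=\Ff_S(G)$, to one fixing $R$, hence is induced by an element of $G$ normalizing $R$ and inducing an element of $A$, i.e.\ by an element of $H$. For equality I would \emph{not} compute fusion directly (exactly where a nontrivial $O_{p'}(C_G(R))$ would inject spurious $p'$-morphisms and break the statement); instead I invoke uniqueness of models. The group $H$ carries the same data as the model $L$ of $\Ee$ from Theorem \ref{thmconstraint}, namely Sylow $S$, normal subgroup $R$ with $C(R)=Z(R)$, and quotient $H/Z(R)\cong A$ acting as $\Aut_\Ee(R)$ on $R$; since this model is built canonically from $(S,R)$ and the $A$-action, the uniqueness in Theorem \ref{thmconstraint} together with its strengthening \cite[2.5]{afs1} furnishes an isomorphism $H\to L$ restricting to the identity on $S$, whence $\Ff_S(H)=\Ff_S(L)=\Ee$. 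The main obstacle is therefore the control of $O_{p'}(C_G(R))$: it is harmless once $G$ is $p'$-reduced $p$-constrained, which Lemma \ref{pprimep} guarantees after the reduction; for a general realizer one first passes to the model $G/O_{p'}(C_G(R))$ and runs the construction there, the only delicate step being to descend the resulting subgroup back into the original $G$, i.e.\ to split off the normal $p'$-subgroup $O_{p'}(C_G(R))$.
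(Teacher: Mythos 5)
The paper gives no proof of this statement: it is quoted from Aschbacher \cite[1.1]{afs3}, and the only trace of the intended argument appears in the proof of Proposition \ref{diagfus}, where the subgroup of $G_{ij}$ realizing $\Ff_i$ is taken to be $q_{ij}^{-1}(\Aut_{\Ff_i}(U_i))$ for the quotient map $q_{ij}\colon N_{G_{ij}}(U_i)\twoheadrightarrow\Aut_{\Ff_{ij}}(U_i)$ with $U_i=O_p(\Ff_i)$. Your construction --- pass to $N_G(R)$ for $R=O_p(\Ee)$ and take the preimage of $\Aut_\Ee(R)$ --- is exactly this, so you have reconstructed the intended proof. Two small slips in the reduction: Proposition \ref{constrcen} gives that $O_p(N_\Ff(R))$ is $\Ff$-centric, not that $R$ is (the two need not coincide); $R$ is nevertheless $\Ff$-centric by the same standard argument, being fully $\Ff$-normalized and self-centralizing in $S$. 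Likewise the reduction does not give $R=O_p(\Ff)$, only $R\trianglelefteq N_G(R)$, which is all you actually use.

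The one genuine gap is the justification of $\Ff_S(H)=\Ee$. Both uniqueness statements you invoke --- Theorem \ref{thmconstraint} and \cite[2.5]{afs1} --- compare two groups \emph{already known} to realize the same saturated constrained fusion system; using them to manufacture an isomorphism $H\to L$ onto the model $L$ of $\Ee$ presupposes $\Ff_S(H)=\Ee$, which is the thing being proved. What you need is the stronger rigidity clause of the model theorem (a finite group $H$ with $S\in\Syl_p(H)$, $R\trianglelefteq H$, $C_H(R)\le R$ and $\Aut_H(R)=\Aut_\Ee(R)$ is automatically a model for $\Ee$), which is true and standard but is not what the quoted results say. Alternatively, the inclusion $\Ff_S(H)\subseteq\Ee$ can be checked directly and non-circularly: for $g\in H$ with $^gP\le S$, set $\varphi=c_g|_R\in\Aut_\Ee(R)$ and note $P\le N_\varphi$, so the extension axiom of the saturated system $\Ee$ produces $\psi\in\Hom_\Ee(RP,S)$ with $\psi|_R=\varphi$; since $\Ee\subseteq\Ff_S(H)$ has already been established, $\psi=c_h|_{RP}$ for some $h\in H$, whence $h^{-1}g\in C_H(R)=Z(R)$ and $c_g|_P$ factors through $\psi$ and conjugation by an element of $Z(R)\le S$, so it lies in $\Ee$. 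Finally, your closing remark on a general (non-model) realizer $G$ remains a gesture rather than an argument; as stated the proposition allows such $G$, but every application in the paper takes $G$ to be $p'$-reduced $p$-constrained, so this is a defect of the statement's generality rather than of your proof of the case that matters.
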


\section{Chamber Systems}

Chamber systems were introduced by Tits \cite{titslocal}; the present treatment follows Scharlau \cite[Sections 1, 6]{scharev} and Ronan \cite[Chapters 1, 4]{ronanbook}. A comprehensive treatment of the theory of coverings of chamber systems can be found in \cite{ronancov}.

\subsection*{Basic notions}

A {\it chamber system} over a set $I$ is a nonempty set $\Cc$ (whose elements are called {\it chambers}) together with a family of equivalence relations $(\sim_i; i\in I)$ on $\Cc$ indexed by $I$. The equivalence classes with respect to $\sim_i$ are called $i$-{\it panels}. Two distinct chambers $c$ and $d$ are called $i$-{\it adjacent} if they are contained in the same $i$-panel; we write $c \sim_i d$. A {\it gallery} of length $n$ connecting two chambers $c_0$ and $c_n$ is a sequence $\gamma: c_0, \ldots, c_n$ of $n+1$ chambers such that $c_{i-1}$ and $c_i$ are $i_j$-adjacent with $i_j \in I$, for all $1 \le j\le n$. The sequence $(i_1, \ldots , i_n)$ is called the {\it type} of the gallery $\gamma$. If each $i_j$ belongs to some subset $J$ of $I$, then $\gamma$ is called a $J$-{\it gallery}.

The chamber system $\Cc$ is {\it connected} (or $J$-{\it connected}) if any two chambers can be joined by a gallery (or $J$-gallery). The $J$-connected components of $\Cc$ are called $J$-{\it residues}. Every $J$-residue is a connected chamber system over the set $J$. The cardinality of the set $I$ is the {\it rank} of the chamber system. The $i$-panels are rank $1$ residues while the chambers are rank $0$ residues.

\begin{num}\label{ccs}
We shall assume from now on that all chamber systems under consideration are connected and of finite rank.
\end{num}

A {\it morphism} $\vp: \Cc \rightarrow \Dd$ between two chamber systems over $I$ is a map defined on the chambers that preserves $i$-adjacency, i.e. if $c, d \in \Cc$ and $c \sim_i d$ then $\vp(c) \sim_i \vp(d)$ in $\Dd$. We denote by $\Aut(\Cc)$ the group of all automorphisms of $\Cc$, where the term automorphism has the obvious meaning. If $G$ is a group of automorphisms of $\Cc$ then $\Cc/G$ inherits a natural structure from $\Cc$, and there is a chamber system $\Cc/G$ over $I$.

\subsection*{Coverings of chamber systems}

A type preserving morphism of chamber systems $\vp: \wCc \rightarrow \Cc$ is called an $m$-{\it covering} if it is surjective and if it maps each rank $m$ residue of $\wCc$ isomorphically onto a rank $m$ residue of $\Cc$. Most of the properties discussed below can be formulated for $m$-coverings; however we shall restrict ourselves to the case when $m=2$, which has greater relevance to the study of buildings as it transpares from \cite{titslocal}.

An {\it elementary $2$-homotopy of galleries} is an alteration from a gallery of the form $\gamma \omega \delta$ to a gallery $\gamma \omega ' \delta$ where $\omega$ and $\omega '$ are galleries (with the same extremities) in a rank $2$ residue. Two galleries are {\it $2$-homotopic} if one can be transformed into the other by a sequence of elementary homotopies. The $2$-homotopy relation is an equivalence relation on the set of galleries.

If $c$ is a chamber in a connected chamber system $\Cc$, a {\it closed gallery based at} $c$ will mean a gallery starting and ending at $c$. The {\it fundamental group} $\pi^2(\Cc, c)$ is the set of $2$-homotopy classes $[\gamma ]$ of closed galleries $\gamma$ based at $c$, together with the binary operation $[ \gamma ] \cdot [ \gamma ' ] = [\gamma \gamma']$ where $\gamma \gamma'$ means $\gamma$ followed by $\gamma'$; using $\gamma^{-1}$ to denote the reversal of $\gamma$, one has $[\gamma]^{-1}=[\gamma^{-1}]$. We call $\Cc$ {\it simply $2$-connected} if it is connected and $\pi^2 (\Cc, c)=1$. If $b$ is another chamber in $\Cc$, $\delta$ is a gallery from $c$ to $b$ and $\gamma$ is a closed gallery based at $c$, the correspondence $[\gamma] \mapsto [\delta ^{-1} \gamma \delta ]$ gives an isomorphism from $\pi^2 (\Cc, c)$ to $\pi^2 (\Cc, b)$. Given a morphism $\vp : (\Cc, c) \rightarrow (\Dd, d)$ with $\vp(c)=d$ there is an induced map $\vp_*: \pi^2(\Cc, c) \rightarrow \pi^2(\Dd, d)$ via $[\gamma] \rightarrow [\vp(\gamma)]$, which is a group homomorphism, and if $\vp$ is a $2$-covering then $\vp_*$ is injective; see \cite[Exercise 1, Chp. 4]{ronanbook}.

To put it briefly, $2$-coverings of chamber systems have similar properties to the topological covers, with the appropriate adjustments on homotopy and simple connectivity; for details see \cite{ronancov} and \cite[4.2]{ronanbook}. In particular we mention a few useful results, which are known from the covering theory of topological spaces.

\begin{lem}\cite[Lemma 4.4]{ronanbook}\label{ulp}
Let $\vp: \wCc \rightarrow \Cc$ be a $2$-covering. Given a gallery $\gamma$ in $\Cc$ starting at some chamber $c$, and given a chamber $\widehat{c} \in \vp^{-1}(c)$, there is a unique gallery $\widehat{\gamma}$ in $\wCc$ starting at $\widehat{c}$ with $\vp(\widehat{\gamma}) = \gamma$.
\end{lem}

\begin{prop}\cite[Theorem 4.8]{ronancov}\label{lifthom}
Let $\vp: (\wCc, \hat{c}) \rightarrow (\Cc, c)$ be a $2$-covering and let $f: (\Dd, d) \rightarrow (\Cc, c)$ be a morphism of chamber systems. The group $f_*(\pi^2(\Dd, d))$ is a subgroup of $\vp_*(\pi^2(\wCc, \hat{c}))$ if and only if there is a unique chamber systems morphism $\widehat{f}: (\Dd, d) \rightarrow (\wCc, \hat{c})$ such that $\vp \fc \widehat{f} = f$.
\end{prop}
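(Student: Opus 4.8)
The plan is to reproduce, in the combinatorial language of chamber systems, the classical lifting criterion for topological covering spaces, replacing paths by galleries, path-homotopy by $2$-homotopy, and the path-lifting and homotopy-lifting properties by their chamber-system analogues. The unique lifting of galleries is already available as Lemma \ref{ulp}; the ingredient I would first isolate is a \emph{homotopy lifting property}: if $\alpha$ and $\beta$ are $2$-homotopic galleries in $\Cc$ with the same initial chamber $c$, then their unique lifts $\hat\alpha, \hat\beta$ starting at a fixed $\hat c \in \vp^{-1}(c)$ are again $2$-homotopic, and in particular end at the same chamber of $\wCc$. Since any $2$-homotopy is a finite chain of elementary ones, it suffices to lift a single elementary $2$-homotopy; and an elementary $2$-homotopy alters a gallery only inside a rank $2$ residue $R$. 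Because $\vp$ is a $2$-covering it restricts to an isomorphism from the rank $2$ residue of $\wCc$ containing the relevant lifted chamber onto $R$, so the two sub-galleries $\omega, \omega'$ (which share both extremities in $R$) lift to sub-galleries sharing both extremities upstairs; splicing these into the lift of the unchanged parts yields the required $2$-homotopy between the lifts. This step, leaning on the defining property of a $2$-covering, is the technical heart of the argument.

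One implication is immediate from functoriality. If a morphism $\hat f$ with $\vp \fc \hat f = f$ exists, then $f_* = \vp_* \fc \hat f_*$ on $\pi^2$, whence $f_*(\pi^2(\Dd,d)) = \vp_*\bigl(\hat f_*(\pi^2(\Dd,d))\bigr) \subseteq \vp_*(\pi^2(\wCc, \hat c))$.

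For the converse I would construct $\hat f$ directly. Given a chamber $e$ of $\Dd$, choose (using connectedness of $\Dd$) a gallery $\delta$ from $d$ to $e$; then $f(\delta)$ is a gallery in $\Cc$ starting at $c$, which I lift uniquely to a gallery starting at $\hat c$ via Lemma \ref{ulp}, and I set $\hat f(e)$ to be its terminal chamber. The crux is well-definedness: if $\delta'$ is a second gallery from $d$ to $e$, the closed gallery $\delta (\delta')^{-1}$ represents a class in $\pi^2(\Dd,d)$, so $[f(\delta (\delta')^{-1})] = f_*[\delta(\delta')^{-1}]$ lies, by hypothesis, in $\vp_*(\pi^2(\wCc,\hat c))$; thus $f(\delta)f(\delta')^{-1}$ is $2$-homotopic to $\vp(\hat\mu)$ for some closed gallery $\hat\mu$ based at $\hat c$. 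Applying the homotopy lifting property together with uniqueness of lifts, the lift of $f(\delta)f(\delta')^{-1}$ at $\hat c$ is $2$-homotopic to $\hat\mu$ and hence closed; reading off its two halves shows that the lifts of $f(\delta)$ and of $f(\delta')$ starting at $\hat c$ have a common terminal chamber, so $\hat f(e)$ is independent of the chosen gallery.

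It then remains to check that $\hat f$ is a morphism of chamber systems with the stated properties. Taking $\delta$ trivial gives $\hat f(d) = \hat c$, and $\vp \fc \hat f = f$ holds because $\vp$ sends the terminal chamber of the lift back to the terminal chamber of $f(\delta)$. If $e \sim_i e'$ in $\Dd$, I would compute $\hat f(e')$ using a gallery from $d$ to $e$ extended by the single $i$-step to $e'$: the corresponding one-step extension upstairs shows $\hat f(e) \sim_i \hat f(e')$, so $\hat f$ preserves $i$-adjacency. Finally, for uniqueness I would argue by connectedness rather than by re-lifting: if $\hat g$ is another lift fixing $\hat c$, the set $\{ e \in \Dd : \hat f(e) = \hat g(e)\}$ contains $d$ and is closed under $i$-adjacency for every $i$, because $\vp$ is injective on each $i$-panel (an $i$-panel lies in a rank $2$ residue, on which $\vp$ is an isomorphism) while $\hat f(e')$ and $\hat g(e')$ lie in the same panel and have the same image under $\vp$; connectedness of $\Dd$ then forces $\hat f = \hat g$.
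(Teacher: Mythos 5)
Your proposal is correct and follows essentially the same route as the paper: the forward implication by functoriality, the construction of $\widehat{f}$ by lifting $f$-images of galleries from $d$ and checking well-definedness via the subgroup hypothesis, the morphism property from the fact that $\vp$ is an isomorphism on low-rank residues, and uniqueness by connectedness of $\Dd$. The only difference is that you explicitly isolate and justify the $2$-homotopy lifting property (reducing to an elementary $2$-homotopy inside a rank $2$ residue), a step the paper's proof uses implicitly when it asserts that $f(\gamma_1\gamma_2^{-1})$ lifts to a closed gallery; making it explicit is a genuine, if small, improvement in completeness.
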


Since the results of Proposition \ref{lifthom} will be used several times in the remainder of the section, we provide a proof.

\begin{proof}
If a morphism $\widehat{f}: (\Dd, d) \rightarrow (\wCc, \hat{c})$ exists then for any $[\gamma] \in \pi^2 (\Dd, d)$ we have
$$f_*([\gamma]) = (\vp \fc \widehat{f})_*([\gamma]) = [(\vp \fc \widehat{f}) (\gamma)] = [\vp(\widehat{f}(\gamma))] =\vp_* [\widehat{f}(\gamma)] \in \vp_*(\pi^2(\wCc, \hat{c}))$$
and clearly $f_* (\pi^2(\Dd, d)) \le \vp_*(\pi^2(\wCc, \hat{c}))$.

Conversely, we may assume that $f_*(\pi^2(\Dd, d)) \le \vp_*(\pi^2(\wCc, \hat{c}))$.

{\it Step 1:} we construct a map $\widehat{f}: (\Dd, d) \rightarrow (\wCc, \hat{c})$ with $\vp \fc \widehat{f} = f$.\\
Let $d'$ be a chamber in $\Dd$, distinct from $d$. Let $\gamma_1$ and $\gamma_2$ be two galleries from $d$ to $d'$ in $\Dd$. By Lemma \ref{ulp}, the galleries $f(\gamma_1)$ and $f(\gamma_2)$ from $c = f(d)$ to $f(d')$ have unique lifts to galleries $\widehat{\gamma}_1$ and $\widehat{\gamma}_2$ in $\wCc$ starting at $\hat{c}$, where $\vp(\widehat{\gamma}_i) = f(\gamma_i)$ for $i=1,2$. Next consider the closed gallery $\gamma_1 \gamma^{-1}_2$ based at $d$. Then $f(\gamma_1\gamma^{-1}_2)$ is a closed gallery in $\Cc$ based at $f(d) =c$, whose $2$-homotopy class lies in $\vp_*(\pi^2(\wCc, \hat{c}))$ according to our assumption. Hence $f(\gamma_1 \gamma^{-1}_2)$ lifts uniquely to a closed gallery based at $\hat{c}$, and therefore $\widehat{\gamma}_1$ and $\widehat{\gamma}_2$ have the same endchamber, say $\hat{c}'$. Define $\widehat{f}(d') = \hat{c}'$. The preceding argument shows that $\widehat{f}: \Dd \rightarrow \wCc$ is well-defined.

{\it Step 2:} we show that $\widehat{f}$ is a morphism of chamber systems.\\
Assume that $d$ and $d'$ are $i$-adjacent in $\Dd$. Then $f(d)$ and $f(d')$ are $i$-adjacent and since $\vp$ is isomorphic on rank-$1$ residues it follows that $\vp^{-1}(f(d))=\widehat{f}(d)$ and $\vp^{-1}(f(d')) = \widehat{f}(d')$ are $i$-adjacent, thus $\widehat{f}$ is indeed a chamber systems morphism.

{\it Step 3:} we prove that the morphism $\widehat{f}$ is unique.\\
Assume that $\widehat{f}_1$ and $\widehat{f}_2$ are two chamber systems morphisms, constructed as in {\it Step 1}. Let $d'$ be a chamber in $\Dd$ that is $i$-adjacent to $d$. Then $\widehat{f}_1(d') \sim_i \widehat{f}_1(d) = \widehat{f}_2(d) \sim_i \widehat{f}_2(d')$ in $\wCc$, but $f(d)$ and $f(d')$ are $i$-adjacent in $\Cc$ and since $\vp$ is an isomorphism on rank $1$ residues it follows that $\widehat{f}_1(d')=\widehat{f}_2(d')$. Since $\Dd$ is connected (recall \ref{ccs}) the result follows.
\end{proof}

We record the following direct consequence of Proposition \ref{lifthom}.

\begin{cor}\cite[Proposition 6.1.7]{scharev}\label{liftcrit}
Let $f$ be an automorphism of the chamber system $\Cc$ and let $\vp: (\wCc, \hat{c}) \rightarrow (\Cc, c)$ be a $2$-covering. Given $\hat{c}' \in \vp^{-1}(f(c))$ there exists a unique automorphism $g$ of $\wCc$ satisfying $g(\widehat{c}) = \hat{c}'$ with $\vp \fc g = f \fc \vp$ if and only if $(f \fc \vp)_* (\pi^2(\wCc, \hat{c})) = \vp_* (\pi^2(\wCc, \hat{c}'))$.
\end{cor}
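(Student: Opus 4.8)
The plan is to realize $g$ as a lift of the morphism $f \fc \vp$ through the covering $\vp$, so that the whole statement reduces to the lifting criterion of Proposition \ref{lifthom}. First I would note that $(f \fc \vp)(\hat{c}) = f(c)$, so $f \fc \vp$ is a morphism $(\wCc, \hat{c}) \rightarrow (\Cc, f(c))$, while $\vp$ based at $\hat{c}'$ is a $2$-covering $(\wCc, \hat{c}') \rightarrow (\Cc, f(c))$, since $\vp(\hat{c}') = f(c)$. An automorphism $g$ of $\wCc$ with $g(\hat{c}) = \hat{c}'$ and $\vp \fc g = f \fc \vp$ is then exactly a base-chamber preserving lift of $f \fc \vp$ along this covering. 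Throughout I will write $f_*$ for the isomorphism $\pi^2(\Cc, c) \rightarrow \pi^2(\Cc, f(c))$ induced by $f$ and use $(f \fc \vp)_* = f_* \fc \vp_*$.

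For the forward implication I would suppose such a $g$ exists. Being an automorphism with $g(\hat{c}) = \hat{c}'$, it induces an isomorphism $g_* : \pi^2(\wCc, \hat{c}) \rightarrow \pi^2(\wCc, \hat{c}')$, so $g_*(\pi^2(\wCc, \hat{c})) = \pi^2(\wCc, \hat{c}')$. Applying $\vp_*$ and using $\vp_* \fc g_* = (f \fc \vp)_*$ then gives $(f \fc \vp)_*(\pi^2(\wCc, \hat{c})) = \vp_*(\pi^2(\wCc, \hat{c}'))$, which is the asserted equality.

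For the converse I would assume the equality and first extract the single morphism. The inclusion $(f \fc \vp)_*(\pi^2(\wCc, \hat{c})) \le \vp_*(\pi^2(\wCc, \hat{c}'))$ is part of it, so Proposition \ref{lifthom} furnishes a unique morphism $g : (\wCc, \hat{c}) \rightarrow (\wCc, \hat{c}')$ with $\vp \fc g = f \fc \vp$; since any automorphism with the required properties is in particular such a lift, uniqueness of $g$ is immediate. The real content, and the step I expect to be the main obstacle, is to promote this morphism to an automorphism, and this is exactly where the full equality (rather than a single inclusion) is used.

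To produce an inverse I would run the same construction with $f^{-1}$ in place of $f$: the morphism $f^{-1} \fc \vp$ based at $\hat{c}'$ lands in $(\Cc, c)$, and since $(f^{-1} \fc \vp)_*(\pi^2(\wCc, \hat{c}')) = (f^{-1})_*\bigl(\vp_*(\pi^2(\wCc, \hat{c}'))\bigr)$, applying $(f^{-1})_*$ to the hypothesised equality shows $(f^{-1} \fc \vp)_*(\pi^2(\wCc, \hat{c}')) = \vp_*(\pi^2(\wCc, \hat{c}))$. In particular the inclusion needed by Proposition \ref{lifthom} holds, so there is a morphism $g' : (\wCc, \hat{c}') \rightarrow (\wCc, \hat{c})$ with $\vp \fc g' = f^{-1} \fc \vp$. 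Then $\vp \fc (g' \fc g) = (f^{-1} \fc \vp) \fc g = f^{-1} \fc (\vp \fc g) = f^{-1} \fc f \fc \vp = \vp$, so $g' \fc g$ is a base-chamber preserving lift of $\vp$ along $\vp$; as ${\rm Id}_{\wCc}$ is another such lift, the uniqueness clause of Proposition \ref{lifthom} forces $g' \fc g = {\rm Id}_{\wCc}$, and symmetrically $g \fc g' = {\rm Id}_{\wCc}$. Hence $g$ is an automorphism. The injectivity of $\vp_*$ recorded earlier ensures no information is lost in passing between fundamental groups upstairs and downstairs, but the genuine crux is the symmetric treatment of $f$ and $f^{-1}$, which is available only because equality, not mere inclusion, is assumed.
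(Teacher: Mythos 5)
Your proof is correct and follows exactly the route the paper intends: the paper records this corollary without proof as a ``direct consequence of Proposition \ref{lifthom}'', and your argument supplies precisely that derivation, using the inclusion criterion of Proposition \ref{lifthom} to build the lift $g$ and the symmetric application to $f^{-1}$ (together with uniqueness of lifts) to invert it. The forward direction via $\vp_* \fc g_* = (f \fc \vp)_*$ and the observation that only the full equality, not a one-sided inclusion, makes $g$ invertible are both handled correctly.
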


\begin{ntn}\label{deck}
We let $\Aut(\vp)$ denote the group of deck transformation of the $2$-covering $\vp: (\wCc, \hat{c}) \rightarrow (\Cc, c)$; thus the automorphism $g: \wCc \rightarrow \wCc$ is an element of $\Aut(\vp)$ if $\vp \fc g = \vp$.
\end{ntn}

The following result characterizes $2$-coverings of $\Cc$ which correspond to normal subgroups of the fundamental group $\pi^2(\Cc, c)$; such $2$-coverings are called normal in topology.

\begin{prop}\label{norcov}
Let $\vp: (\wCc, \hat{c}) \rightarrow (\Cc, c)$ be a $2$-covering.
\begin{list}{\upshape\bfseries}
{\setlength{\leftmargin}{1cm}
\setlength{\labelwidth}{1cm}
\setlength{\labelsep}{0.2cm}
\setlength{\parsep}{0.5ex plus 0.2ex minus 0.1ex}
\setlength{\itemsep}{0cm}}
\item[${\rm(i).}$] $\vp_*(\pi^2 (\wCc, \hat{c})) \trianglelefteq \pi^2(\Cc, c)$ if and only if for each chamber $\hat{c}' \in \vp^{-1}(c)$ there is a deck transformation $g \in \Aut(\vp)$ with the property $g(\hat{c})=\hat{c}'$.
\item[${\rm(ii).}$] $\Aut(\vp) \simeq N_{\pi^2(\Cc, c)}(\vp_*(\pi^2(\wCc, \hat{c})))/\vp_*(\pi^2(\wCc, \hat{c}))$.
\item[${\rm(iii).}$] $\Cc \simeq  \wCc/\Aut(\vp)$ if and only if $\vp_*(\pi^2(\wCc, \hat{c})) \trianglelefteq \pi^2(\Cc, c)$.
\end{list}
\end{prop}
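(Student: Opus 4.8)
The plan is to prove the three statements by exploiting the formal analogy with the covering theory of topological spaces, using the lifting criterion of Proposition \ref{lifthom} and Corollary \ref{liftcrit} as the main engines. For part (iii) I would also need the standard fact that for a $2$-covering $\vp$ the induced map $\vp_*$ is injective (stated in the excerpt, via \cite[Exercise 1, Chp.\ 4]{ronanbook}), so that $\pi^2(\wCc,\hat c)$ is identified with its image $\vp_*(\pi^2(\wCc,\hat c))\le\pi^2(\Cc,c)$.

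For part (i), I would argue as follows. Suppose first that $\vp_*(\pi^2(\wCc,\hat c))\trianglelefteq\pi^2(\Cc,c)$, and fix $\hat c'\in\vp^{-1}(c)$. Applying Corollary \ref{liftcrit} with $f=\mathrm{Id}_{\Cc}$ (so $f(c)=c$), a deck transformation $g$ with $g(\hat c)=\hat c'$ and $\vp\fc g=\vp$ exists if and only if $\vp_*(\pi^2(\wCc,\hat c))=\vp_*(\pi^2(\wCc,\hat c'))$. Now $\vp_*(\pi^2(\wCc,\hat c'))$ is the image under $\vp_*$ of the fundamental group based at the other chamber $\hat c'$, and choosing any gallery $\hat\delta$ from $\hat c$ to $\hat c'$ in $\wCc$ (connectedness, recall \ref{ccs}), with image $\delta=\vp(\hat\delta)$ a closed gallery at $c$, one computes that $\vp_*(\pi^2(\wCc,\hat c'))=[\delta]^{-1}\,\vp_*(\pi^2(\wCc,\hat c))\,[\delta]$ via the base-point change isomorphism described in the excerpt. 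Since every $\hat c'\in\vp^{-1}(c)$ is reached by such a $\delta$ and, conversely, every class $[\delta]\in\pi^2(\Cc,c)$ lifts (Lemma \ref{ulp}) to a gallery ending at some $\hat c'\in\vp^{-1}(c)$, the condition ``$g$ exists for all $\hat c'$'' is exactly the condition that all conjugates $[\delta]^{-1}\vp_*(\pi^2(\wCc,\hat c))[\delta]$ equal $\vp_*(\pi^2(\wCc,\hat c))$, i.e.\ normality. This gives both implications of (i).

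For part (ii), I would make the conjugation computation of (i) the core of the argument. Define a map $\Phi\colon N_{\pi^2(\Cc,c)}(\vp_*(\pi^2(\wCc,\hat c)))\to\Aut(\vp)$ by sending the class of a closed gallery $\delta$ based at $c$ to the unique deck transformation $g_\delta$ with $g_\delta(\hat c)=\hat c'$, where $\hat c'$ is the endchamber of the lift of $\delta$ starting at $\hat c$; the normalizer condition on $[\delta]$ is precisely what guarantees, via Corollary \ref{liftcrit} and the base-point computation above, that such a $g_\delta$ exists. I would check that $\Phi$ is a homomorphism (composing lifts of $\delta$ and $\delta'$), that it is surjective since any deck transformation is determined by the image $g(\hat c)\in\vp^{-1}(c)$ and (i) supplies the corresponding $[\delta]$, and that its kernel consists of those $[\delta]$ whose lift returns to $\hat c$, which by uniqueness of lifts and injectivity of $\vp_*$ is exactly $\vp_*(\pi^2(\wCc,\hat c))$. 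The first isomorphism theorem then yields the stated isomorphism.

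For part (iii), assuming normality I would show $\wCc/\Aut(\vp)\simeq\Cc$. The $2$-covering $\vp$ is constant on $\Aut(\vp)$-orbits, so it factors through a type-preserving morphism $\bar\vp\colon\wCc/\Aut(\vp)\to\Cc$; surjectivity is inherited from $\vp$, and I would check that $\bar\vp$ is injective by using part (i): two chambers of $\wCc$ in the same fiber $\vp^{-1}(c')$ are interchanged by some deck transformation (transitivity of $\Aut(\vp)$ on fibers, which follows from (i) applied after transporting to the fiber over $c$), hence lie in the same $\Aut(\vp)$-orbit. The main obstacle, and the step I would treat most carefully, is this transitivity of $\Aut(\vp)$ on the fibers together with the base-point-change conjugation formula $\vp_*(\pi^2(\wCc,\hat c'))=[\delta]^{-1}\vp_*(\pi^2(\wCc,\hat c))[\delta]$; everything else is a formal transcription of covering-space theory, but one must verify that galleries, lifts and $2$-homotopy interact correctly (Lemma \ref{ulp} guarantees existence and uniqueness of lifts, and the injectivity of $\vp_*$ is what converts ``the lifted gallery is closed'' into ``$[\delta]\in\vp_*(\pi^2(\wCc,\hat c))$''). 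For the converse in (iii), if $\wCc/\Aut(\vp)\simeq\Cc$ then $\Aut(\vp)$ acts transitively on each fiber, so by (i) the subgroup $\vp_*(\pi^2(\wCc,\hat c))$ is normal.
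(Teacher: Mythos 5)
Your proposal is correct and follows essentially the same route as the paper: parts (i) and (ii) rest on Corollary \ref{liftcrit} together with the observation that changing the base chamber within the fibre $\vp^{-1}(c)$ conjugates $\vp_*(\pi^2(\wCc,\hat{c}))$ by the class of the projected connecting gallery, and your map $\Phi$ in (ii), with its surjectivity via connectedness of $\wCc$ and its kernel identified as the classes lifting to closed galleries, is exactly the paper's. Your treatment of (iii) is in fact slightly more complete than the paper's, which only sketches the forward direction by invoking fibre-transitivity of $\Aut(\vp)$ from part (i); your factorization of $\vp$ through $\wCc/\Aut(\vp)$ and the converse argument are correct and supply the details the paper omits.
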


\begin{proof}
(i). Set $N(H) = N_{\pi^2(\Cc, c)}(\vp_*(\pi^2(\wCc, \hat{c})))$ with $H=\vp_*(\pi^2(\wCc, \hat{c}))$. Observe that changing the base point $\hat{c} \in \vp^{-1}(c)$ to $\hat{c}' \in \vp^{-1}(c)$ corresponds to conjugating $H$ by an element $[\gamma] \in \pi^2(\Cc, c)$, where $\gamma$ is a closed gallery based at $c$ that lifts to a gallery $\widehat{\gamma}$ from $\hat{c}$ to $\hat{c}'$. Thus $[\gamma]$ is in $N(H)$ if and only if $\vp_*(\pi^2(\wCc, \hat{c})) = \vp_*(\pi^2(\wCc, \hat{c}'))$, which by Corollary \ref{liftcrit} is equivalent to the existence of a (unique) deck transformation $g \in \Aut(\vp)$ with $g(\hat{c})=\hat{c}'$.

(ii). Define the morphism $\Phi: N(H) \rightarrow \Aut(\vp)$ given by $\Phi([\gamma]) = g$ where $[\gamma]$ is sent to the deck transformation $g$ which takes $\hat{c}$ to $\hat{c}'$ ($\gamma$ is as in part (i), which means that the unique lift $\widehat{\gamma}$ of $\gamma$ at $\hat{c}$ is a gallery from $\hat{c}$ to $\hat{c}'$). There exists a unique such deck transformation $g$ by Proposition \ref{lifthom} and Corollary \ref{liftcrit}.  Observe that $\Phi$ is a group homomorphism, for if $\gamma'$ is another closed gallery based at $c$, with $\Phi([\gamma'])=g'$ and such that $g'(\hat{c})=\hat{c}''$ then $\gamma  \gamma'$ lifts to $\widehat{\gamma} g(\widehat{\gamma}')$ a gallery from $\hat{c}$ to $g(\hat{c}'')=g(g'(\hat{c}))$, given that $\widehat{\gamma}'$ is the unique lift of $\gamma'$ at $\hat{c}$. Hence $g\fc g'$ is the deck transformation corresponding to $[\gamma] \cdot[\gamma']$. For $g \in \Aut(\vp)$, let $\widehat{\gamma}$ be the gallery from $\hat{c}$ to $\hat{c}' = g(\hat{c})$. (Note here that $\wCc$ is assumed to be connected, see \ref{ccs}). Then for $\gamma = \vp(\widehat{\gamma})$, $[\gamma]$ lies in the preimage of $g$. So $\Phi$ is surjective. The kernel of $\Phi$ consists of those $2$-homotopy classes $[\gamma]$ which lift to closed galleries in $\wCc$, that is the elements of $H$.

(iii). Since $\vp_*(\pi^2(\wCc, \hat{c})) \trianglelefteq \pi^2(\Cc, c)$, it follows by part (i) of this Proposition, that the group of deck transformations $\Aut(\vp)$ acts chamber transitively on $\wCc$.
\end{proof}

\begin{num}
Let $\vp: (\wCc, \hat{c}) \rightarrow (\Cc, c)$ be a $2$-covering of chamber systems. A {\it lift} of $f \in \Aut(\Cc)$ corresponding to $\vp$, is a morphism $\widehat{f} \in \Aut(\wCc)$ with $\vp \fc \widehat{f} = f \fc \vp$.
\end{num}

\begin{defn}\label{univcov}
A $2$-covering $\vp: (\wCc, \hat{c}) \rightarrow (\Cc, c)$ is called {\it universal} if whenever $\psi: (\Cc',c') \rightarrow (\Cc, c)$ is a $2$-covering, there exists some $2$-covering $\alpha: (\wCc, \hat{c}) \rightarrow (\Cc', c')$ such that $\psi \fc \alpha = \vp$. It was shown by Tits \cite[5.1]{titslocal} that universal $2$-coverings of chamber systems always exist; they are unique up to isomorphism. A $2$-covering $\vp: (\wCc, \hat{c}) \rightarrow (\Cc, c)$ is universal if and only if $\wCc$ is simply $2$-connected; see \cite[Proposition 4.6]{ronanbook}.
\end{defn}

In the special case when $\wCc$ is simply $2$-connected, an application of Proposition \ref{norcov} and Corollary \ref{liftcrit} gives the following, cf. \cite[Proposition 6.1.8]{scharev}.

\begin{prop}\label{12.13}
Let $\vp: (\wCc, \hat{c}) \rightarrow (\Cc, c)$ be a universal $2$-covering.
\begin{list}{\upshape\bfseries}
{\setlength{\leftmargin}{1cm}
\setlength{\labelwidth}{1cm}
\setlength{\labelsep}{0.2cm}
\setlength{\parsep}{0.5ex plus 0.2ex minus 0.1ex}
\setlength{\itemsep}{0cm}}
\item[${\rm(i).}$]  $\Aut(\vp) \simeq \pi^2(\Cc, c)$ and $\Cc \simeq \wCc /\Aut(\vp)$.
\item[${\rm(ii).}$] For any subgroup $G \le \Aut(\Cc)$ the set of liftings:
$$\wG: = \lbrace \widehat{g} \in \Aut(\wCc): \vp \fc \widehat{g} = g \fc \vp \; \; {\rm for}\;\;{\rm some}\; \; g \in G \rbrace$$
is a subgroup of $\Aut(\wCc)$, and the map $\wG \rightarrow G,\; \widehat{g} \mapsto g$, is a surjective group homomorphism with kernel $\Aut(\vp)$. For any chamber $\hat{c} \in \wCc$, the stabilizer ${\rm Stab}_{\wG}(\hat{c})$ is mapped isomorphically onto ${\rm Stab}_{G}(\vp(\hat{c}))$.
\end{list}
\end{prop}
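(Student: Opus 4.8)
The plan is to leverage the single fact that makes a universal $2$-covering special: by Definition \ref{univcov}, $\wCc$ is simply $2$-connected, so $\vp_*(\pi^2(\wCc, \hat{c})) = 1$. Every homotopy-theoretic hypothesis we will need to invoke---the normality condition in Proposition \ref{norcov} and the equality of pushed-forward fundamental groups in Corollary \ref{liftcrit}---then holds automatically, since the relevant subgroups of $\pi^2(\Cc, -)$ are all trivial. Part (i) follows immediately: taking $\vp_*(\pi^2(\wCc, \hat{c})) = 1$, which is normal in $\pi^2(\Cc, c)$ with full normalizer, Proposition \ref{norcov}(ii) gives $\Aut(\vp) \simeq \pi^2(\Cc, c)/1 \simeq \pi^2(\Cc, c)$, and Proposition \ref{norcov}(iii) gives $\Cc \simeq \wCc/\Aut(\vp)$.

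For part (ii) I would first record that every $g \in G$ lifts: given any $\hat{c}' \in \vp^{-1}(g(c))$, the criterion $(g \fc \vp)_*(\pi^2(\wCc, \hat{c})) = \vp_*(\pi^2(\wCc, \hat{c}'))$ of Corollary \ref{liftcrit} is satisfied because both sides are trivial, producing a lift $\widehat{g}$. That $\wG$ is a subgroup, that $\widehat{g} \mapsto g$ is a well-defined homomorphism $\rho$ (well-defined since $\vp$ is surjective), and that $\rho$ is surjective, all follow from the defining relation $\vp \fc \widehat{g} = g \fc \vp$: composing two such relations gives $\vp \fc (\widehat{g}_1 \fc \widehat{g}_2) = (g_1 \fc g_2) \fc \vp$ with $g_1 \fc g_2 \in G$, and inverting gives $\vp \fc \widehat{g}^{-1} = g^{-1} \fc \vp$. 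The kernel is $\{\widehat{g} \in \Aut(\wCc) : \vp \fc \widehat{g} = \vp\}$, which is exactly $\Aut(\vp)$ by Notation \ref{deck}.

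It remains to treat the stabilizer assertion, which I expect to be the delicate point. Fix a chamber $\hat{c} \in \wCc$ and put $c = \vp(\hat{c})$. Evaluating $\vp \fc \widehat{g} = g \fc \vp$ at $\hat{c}$ shows at once that $\rho$ carries ${\rm Stab}_{\wG}(\hat{c})$ into ${\rm Stab}_G(c)$. Surjectivity onto ${\rm Stab}_G(c)$ I would obtain from Corollary \ref{liftcrit} with $f = g$ and target chamber $\hat{c}' = \hat{c} \in \vp^{-1}(g(c)) = \vp^{-1}(c)$: this yields a lift of $g$ fixing $\hat{c}$, hence an element of ${\rm Stab}_{\wG}(\hat{c})$ mapping to $g$. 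The real work is injectivity, i.e.\ showing that an element of $\Aut(\vp)$ fixing the single chamber $\hat{c}$ must be ${\rm id}_{\wCc}$; here I would again appeal to Corollary \ref{liftcrit}, now through its uniqueness clause with base point $\hat{c}$, to conclude that any such deck transformation and ${\rm id}_{\wCc}$ are the same lift of ${\rm id}_{\Cc}$, using the connectivity of $\wCc$ (recall \ref{ccs}). This rigidity of lifts under a single-chamber normalization is the one step that genuinely uses uniqueness rather than mere existence, and is the main obstacle; once it is in place, $\rho$ restricts to a bijective homomorphism ${\rm Stab}_{\wG}(\hat{c}) \to {\rm Stab}_G(c)$, completing the proof.
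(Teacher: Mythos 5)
Your proposal is correct and follows exactly the route the paper indicates: the paper gives no written-out proof, only the remark that the result is ``an application of Proposition \ref{norcov} and Corollary \ref{liftcrit}'' in the simply $2$-connected case, and your argument is a faithful and complete expansion of precisely that --- using $\pi^2(\wCc,\hat c)=1$ to trivialize the normality and lifting-criterion hypotheses, and the uniqueness clause of Corollary \ref{liftcrit} for the rigidity of deck transformations fixing a chamber. No gaps.
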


\subsection*{Parabolic systems and chamber systems}
Next, we shall discuss the relation between chamber systems and chamber transitive groups of automorphisms.

\begin{num}\label{parsys}
Let $G$ be a group and let $I$ be a finite index set of cardinality $n$. Let $B, G_i, i \in I$ be a collection of subgroups of $G$ satisfying the following properties:
\begin{list}{\upshape\bfseries}
{\setlength{\leftmargin}{2cm}
\setlength{\labelwidth}{1cm}
\setlength{\labelsep}{0.2cm}
\setlength{\parsep}{0cm}
\setlength{\itemsep}{0cm}}
\item[$(P_1)$.] $G  = \langle G_i\;; i \in I \rangle  \ne \langle G_j\;; j \in J \varsubsetneq I \rangle$;
\item[$(P_2)$.] $G_i \cap G_j  = B$, for any distinct $i, j \in I$;
\item[$(P_3)$.] $B \ne G_i$ for all $i \in I$;
\item[$(P_4)$.] $\cap_{g \in G}B^g = 1$.
\end{list}
The family $\Ps (G) =(B, G_i\;; i \in I)$ is called a {\it parabolic system of rank $n$ in $G$}.
\end{num}

\begin{num}\label{associatedchamber}
To $\Ps (G)$ we can associate a chamber system $\Cc(\Ps(G))$ as follows. The left cosets of $B$ in $G$ correspond to the chambers. Two chambers $gB$ and $hB$ are $i$-adjacent if $gG_i = hG_i$ where $g, h \in G$ and $i \in I$, whence $h^{-1}g \in G_i$. Since $G = \langle G_i\;; i \in I \rangle$ the chamber system $\Cc$ is connected. The group $G$ acts on $\Cc$ via left multiplication, an element $x \in G$ takes a chamber $gB$ into a chamber $xgB$; if $gB \sim_i hB$ is an $i$-panel, then $gG_i = hG_i$, which implies $xgG_i=xhG_i$ and therefore $xgB \sim_i xhB$. The action of $G$ on $\Cc$ is chamber transitive; it is also faithful because of ($P_4$). Henceforth the group $G$ will be identified with the automorphisms of $\Cc$ induced by $G$.
\end{num}

\begin{ntn}
Set $G_J = \langle G_j\;; j \in J \rangle$ for all nonempty subsets $J \subseteq I$, with the convention that $G_{\emptyset} := B$. In particular $G_I=G$, $G_i =G_{\{i\}}$ and we write $G_{ij}$ for $G_{\{ i, j \}}$. It is a consequence of $(P_2)$ that $G_J$ is the stabilizer in $G$ of the $J$-residue which contains the chamber $B$.
\end{ntn}

\begin{num}
Given $\Ps(G)=(B, G_i\;; i \in I)$ and $\Ps(G') = (B',G'_i\;; i \in I)$, two parabolic systems of rank $n$ over the same set of types; a group homomorphism $\vp:G \rightarrow G'$ is called a {\it 2-morphism of parabolic systems}, if $\vp(B)=B'$, $\vp(G_i)=G_i'$ and $\vp(G_{ij})=G'_{ij}$ for all $i, j \in I$ with $i \ne j$. In particular $\vp(G)=G'$, thus $\vp$ is surjective.
\end{num}

\begin{lem}\label{morparb}
Let $\vp: \Ps (G) \rightarrow \Ps(G')$ be a 2-morphism of parabolic systems with the property that the restriction of $\vp$ to $G_{ij}$ is a group isomorphism for every distinct $i, j \in I$. Then there is an induced $2$-covering $\vp_*: \Cc(\Ps(G)) \rightarrow \Cc(\Ps(G'))$ between the associated chamber systems. Furthermore $\Aut(\vp_*) \cap G= {\rm Ker}(\vp)$ and $\Cc(\Ps(G')) \simeq \Cc(\Ps(G))/{\rm Ker}(\vp)$.
\end{lem}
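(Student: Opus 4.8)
The plan is to define the covering on chambers by $gB \mapsto \vp(g)B'$, verify that it is a type-preserving surjective morphism, promote it to a $2$-covering using the hypothesis on the rank two parabolics, and finally extract the two group-theoretic assertions from condition $(P_4)$ and the normality of ${\rm Ker}(\vp)$.

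First I would check that the rule $\vp_*(gB) = \vp(g)B'$ is well defined and type preserving. If $gB = hB$ then $h^{-1}g \in B$, so $\vp(h)^{-1}\vp(g) = \vp(h^{-1}g) \in \vp(B) = B'$; hence $\vp_*$ does not depend on the choice of coset representative. Running the same computation with $G_i$ in place of $B$ shows that $gB \sim_i hB$ forces $\vp_*(gB) \sim_i \vp_*(hB)$, so $\vp_*$ is a morphism of chamber systems over $I$. It is surjective because $\vp$ is: indeed $G' = \langle G'_i \rangle = \langle \vp(G_i)\rangle = \vp(G)$.

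The heart of the matter, and the step where the hypothesis is genuinely used, is to show that $\vp_*$ restricts to an isomorphism on every rank two residue. Recall that the $\{i,j\}$-residue through a chamber $xB$ is $\{ xgB : g \in G_{ij}\}$, which I would identify with the coset geometry $G_{ij}/B$ via $xgB \leftrightarrow gB$; the analogous statement holds in $\Cc(\Ps(G'))$. On this residue $\vp_*$ becomes the induced coset map $gB \mapsto \vp(g)B'$ attached to the restriction $\vp|_{G_{ij}} : G_{ij} \to G'_{ij}$. By assumption this restriction is a group isomorphism, and as a $2$-morphism it carries $B, G_i, G_j$ onto $B', G'_i, G'_j$; consequently its inverse carries the primed subgroups back, so both $\vp|_{G_{ij}}$ and its inverse send cosets to cosets preserving $i$- and $j$-adjacency. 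Thus $\vp_*$ maps the $\{i,j\}$-residue through $xB$ isomorphically onto the $\{i,j\}$-residue through $\vp(x)B'$, which is precisely the defining property of a $2$-covering. I expect this residue analysis to be the main obstacle, though only in the sense of bookkeeping: once the residues are identified with $G_{ij}/B$ the claim is immediate from the isomorphism hypothesis.

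For the remaining assertions, consider first an element $x \in G$ acting by left translation. It is a deck transformation precisely when $\vp(x)\vp(g)B' = \vp(g)B'$ for every $g$, i.e. when $\vp(g)^{-1}\vp(x)\vp(g) \in B'$ for all $g$. Since $\vp$ is surjective, $\vp(g)$ runs over all of $G'$, so this says that $\vp(x)$ lies in every conjugate of $B'$; by condition $(P_4)$ applied to $\Ps(G')$ this intersection is trivial, whence $\vp(x) = 1$. The converse inclusion is clear, so $\Aut(\vp_*) \cap G = {\rm Ker}(\vp)$. Finally, write $K = {\rm Ker}(\vp)$, a normal subgroup of $G$. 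The map $\vp_*$ is constant on $K$-orbits, so it factors through a morphism $\overline{\vp}_* : \Cc(\Ps(G))/K \to \Cc(\Ps(G'))$. Using normality of $K$ (so that $KgB = gBK$) the $K$-orbits on $G/B$ are in bijection with the cosets $G/BK$, and these in turn correspond to the chambers $G'/B'$ of $\Cc(\Ps(G'))$, giving a bijection on chambers. For adjacency I would verify directly that two orbits $KgB$ and $KhB$ are $i$-adjacent in the quotient if and only if $h^{-1}g \in G_iK = \vp^{-1}(G'_i)$, which is equivalent to $\vp_*(gB) \sim_i \vp_*(hB)$. Hence $\overline{\vp}_*$ and its inverse both preserve adjacency, yielding the isomorphism $\Cc(\Ps(G')) \simeq \Cc(\Ps(G))/K$.
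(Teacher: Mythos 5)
Your proposal is correct and follows essentially the same route as the paper: the same coset formula for $\vp_*$, the same use of $(P_4)$ to identify $\Aut(\vp_*)\cap G$ with ${\rm Ker}(\vp)$, and the same identification of the fibres of $\vp_*$ with the ${\rm Ker}(\vp)$-orbits. The only difference is that you spell out the residue-by-residue verification that $\vp_*$ is a $2$-covering (identifying each $\{i,j\}$-residue with $G_{ij}/B$) and the adjacency check in the quotient, both of which the paper leaves as asserted or ``straightforward''; your added details are accurate.
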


\begin{proof}
For $gB$ a chamber in $\Cc(\Ps(G))$, define $\vp_*(gB)=\vp(g)B'$, and for an $i$-panel $gG_i$ set $\vp_*(gG_i)=\vp(g)G_i'$. It is straightforward to check that $\vp_*$ is a morphism of chamber systems. Since $\vp$ is surjective it follows that $\vp_*$ is also surjective, and a $2$-covering of chamber systems. Clearly ${\rm Ker}(\vp) \le \Aut(\vp_*) \cap G$, so it remains to show the opposite inclusion. Let $h \in \Aut(\vp_*) \cap G$. Then, according to \ref{deck}, $\vp(h)\vp(g)B'=\vp(g)B'$, for every $g \in G$. It follows that $\vp(h) \in xB'x^{-1}$ with $x \in G'$. Thus $\vp(h) \in \cap_{x \in G'}x B' x^{-1} =1$ by $(P_4)$, and therefore ${\rm Ker}\vp=\Aut(\vp_*) \cap G$. Observe that, for all $g, h \in G$, $\vp_*(gB) = \vp_*(hB)$ if and only if $g^{-1}h \in \vp^{-1}(B') = {\rm Ker}(\vp) B$. Hence the orbits of ${\rm Ker}(\vp)$ in $\Cc(\Ps (G))$ are just the fibres of $\vp_*$ and $\Cc(\Ps(G')) \simeq \Cc(\Ps(G))/{\rm Ker}(\vp)$.
\end{proof}

\subsection*{Colimits and covers}

\begin{num}\label{amalgam}
Let $I$ be a finite index set. A {\it diagram of (finite) groups} is a pair $\Aa = (\Gs, \Upsilon)$ which consists of a collection of finite groups $\Gs = \{ B, G_i, G_{ij}\;; \; i, j \in I, i\ne j \}$ and a family of injective group homomorphisms $\Upsilon= \{ \tau_B^{G_i}: B \rightarrow G_i, \; \tau_{G_i}^{G_{ij}}: G_i \rightarrow G_{ij}\;; \; i, j \in I\}$ such that $\tau_{G_j}^{G_{ij}} \fc \tau_B^{G_j} = \tau_{G_i}^{G_{ij}} \fc \tau_B^{G_i}$, for all $i, j \in I$. A {\it completion of $\Aa$} is a pair $(\tG, \Phi)$ where $\tG$ is a group endowed with a family $\Phi$ of group homomorphisms $\phi: B \rightarrow \tG$, $\phi_i: G_i \rightarrow \tG$ and $\phi_{ij}:G_{ij} \rightarrow \tG$ which commute with the morphisms in $\Upsilon$. The {\it colimit} $(\tG, \Phi)$ of $\Aa$ (also known as the {\it universal completion} of $\Aa$) has the property that if $(\tG', \Phi')$ is another completion, there is exactly one group homomorphism $f: \tG \rightarrow \tG'$ such that $f \fc \phi = \phi'$, $f \fc \phi_i = \phi_i'$ and $f \fc \phi_{ij} = \phi_{ij}'$, for all $i, j \in I$. The group $\tG$ always exists (although it might be the trivial group) and can be regarded as a group whose set of generating symbols is the disjoint union of elements of $G_i$ and with relations given by the disjoint union of those relations holding in $G_{ij}$ that involve only generators from $G_i$ and $G_j$. Thus $\tG$ is the largest group realizing the diagram of groups, any other completion of $\Aa$ is a quotient of $\tG$. A completion for which the morphisms in $\Phi$ are injective is called {\it faithful}. The pair $\mathcal{A}= (\Gs, \Upsilon)$ admits a faithful completion if and only if its universal completion is faithful.
\end{num}

\begin{num}\label{bcovcs}
If $\Ps (G)= (B, G_i\;; i \in I)$ is a parabolic system in $G$ there is a corresponding diagram of groups $\Ps =  \{ B, G_i, G_{ij}\;; \; i, j \in I, i\ne j \}$ with $G_{ij}:=\langle G_i, G_j \rangle$. Let $\tG$ be the universal completion of $\Ps$. For every $i \in I$, the subgroup $G_i$ of $G$ lifts to a subgroup $\tG_i$ of $\tG$, and similarly $B$ lifts to $\widetilde{B}$, thus $\Ps(\tG) = (\widetilde{B}, \tG_i\;; i \in I)$ is a parabolic system in $\tG$.
\end{num}

\begin{thm}\cite[Proposition 6.5.2]{scharev}\label{covcs}
The chamber system $\Cc(\Ps(\tG))$ is the universal $2$-covering of the chamber system $\Cc(\Ps(G))$. In particular, the natural homomorphism $\tG \rightarrow G$ is an isomorphism if and only if $\Cc(\Ps(G))$ is simply $2$-connected.
\end{thm}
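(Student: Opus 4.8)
The plan is to produce the $2$-covering directly from the colimit and then to verify that its total chamber system is simply $2$-connected, so that Definition \ref{univcov} identifies it as the universal $2$-covering. First I would exhibit the natural homomorphism $\pi: \tG \rightarrow G$: since $G$ is a completion of the diagram $\Ps$, the universal property of the colimit (\ref{amalgam}) supplies a unique $\pi$ compatible with all the structure maps, so $\pi(\widetilde{B}) = B$, $\pi(\tG_i) = G_i$ and $\pi(\tG_{ij}) = G_{ij}$; thus $\pi$ is a $2$-morphism of parabolic systems. The one point needing care is that $\pi$ restricts to an isomorphism $\tG_{ij} \rightarrow G_{ij}$ on each rank-$2$ parabolic. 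This follows from the presentation of $\tG$ in \ref{amalgam}: the rank-$2$ subdiagram on $\{i,j\}$ already has $G_{ij}$ as its own universal completion, so the induced map $G_{ij} \rightarrow \tG_{ij}$ is split injective (with $\pi$ as retraction) and surjective since $G_{ij} = \langle G_i, G_j \rangle$. With this in hand, Lemma \ref{morparb} applies verbatim and yields an induced $2$-covering $\pi_*: \Cc(\Ps(\tG)) \rightarrow \Cc(\Ps(G))$ together with the identifications ${\rm Ker}(\pi) = \Aut(\pi_*) \cap \tG$ and $\Cc(\Ps(G)) \simeq \Cc(\Ps(\tG))/{\rm Ker}(\pi)$.

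By Definition \ref{univcov}, it now suffices to prove that $\Cc(\Ps(\tG))$ is simply $2$-connected. Here I would set up the dictionary between galleries based at the chamber $\widetilde{B}$ and words in the generators of $\tG$: a gallery $\widetilde{B}, x_1\widetilde{B}, x_1 x_2 \widetilde{B}, \ldots$ of type $(i_1, \ldots, i_n)$ is recorded by a word $x_1 \cdots x_n$ with $x_k \in \tG_{i_k}$, and the gallery is closed precisely when $x_1 \cdots x_n \in \widetilde{B}$. Connectivity of $\Cc(\Ps(\tG))$ is immediate because the $\tG_i$ generate $\tG$.

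The main step, and the principal obstacle, is to match the $2$-homotopy relation with the defining relations of $\tG$. A rank-$2$ residue of type $\{i,j\}$ through a chamber is a single $\tG_{ij}$-coset, and by definition any two subgalleries inside it with the same extremities are related by one elementary $2$-homotopy; hence a $2$-homotopy of closed galleries corresponds exactly to rewriting the associated word using a relation that holds inside some $\tG_{ij} \simeq G_{ij}$. Since $\tG$ is the universal completion, its only relations are the disjoint union of the relations of the groups $G_{ij}$ (\ref{amalgam}), so any word representing an element of $\widetilde{B}$ can be reduced to the trivial word using precisely such rank-$2$ rewrites. Translating back, every closed gallery based at $\widetilde{B}$ is $2$-homotopic to the constant gallery, that is $\pi^2(\Cc(\Ps(\tG)), \widetilde{B}) = 1$. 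I would take care here with the degenerate steps (letters lying in $\widetilde{B}$, or consecutive letters in the same $\tG_i$), which are absorbed in the standard way. By Definition \ref{univcov} this proves that $\pi_*$ is the universal $2$-covering.

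Finally, for the ``in particular'' clause: because $\pi$ is injective on $\widetilde{B} \simeq B$, the fibre of $\pi_*$ over each chamber is an orbit of ${\rm Ker}(\pi)$ of cardinality $|{\rm Ker}(\pi)|$, so $\pi_*$ is an isomorphism of chamber systems if and only if ${\rm Ker}(\pi) = 1$, i.e. if and only if $\tG \rightarrow G$ is an isomorphism. On the other hand, since $\pi_*$ is the universal $2$-covering, it is an isomorphism exactly when $\Cc(\Ps(G)) \simeq \Cc(\Ps(\tG))$ is simply $2$-connected, by uniqueness of the universal cover (Definition \ref{univcov}). Combining the two equivalences shows that $\tG \rightarrow G$ is an isomorphism if and only if $\Cc(\Ps(G))$ is simply $2$-connected.
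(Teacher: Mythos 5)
Your argument is correct, and it reaches the conclusion by a genuinely different route from the paper. You set up the $2$-covering $\pi_*:\Cc(\Ps(\tG))\rightarrow\Cc(\Ps(G))$ exactly as the paper does (universal property of the colimit plus Lemma \ref{morparb}, with the correct observation that $\pi$ restricts to isomorphisms on the $\tG_{ij}$), but you then prove universality \emph{directly}, by verifying that $\Cc(\Ps(\tG))$ is simply $2$-connected: closed galleries based at $\widetilde{B}$ are encoded as words $x_1\cdots x_n$ with $x_k\in\tG_{i_k}$, elementary $2$-homotopies are matched with rewrites supported in a single $\tG_{ij}$, and triviality of $\pi^2$ falls out of the presentation of the colimit, all of whose relators live in rank at most two. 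The paper never touches this word combinatorics: it invokes Tits' abstract existence of a universal $2$-covering $\vp:\wCc\rightarrow\Cc$, lifts the chamber-transitive $G$-action to $\wG\le\Aut(\wCc)$ via Proposition \ref{12.13}, observes that $\wG$ is then another faithful completion of the diagram $\Ps$ (with $\widehat{B},\wG_i,\wG_{ij}$ identified with $B,G_i,G_{ij}$), and uses the colimit property to obtain a surjection $\zeta:\tG\rightarrow\wG$ inducing a $2$-covering $\widetilde{\Cc}\rightarrow\wCc$, which must be an isomorphism because $\wCc$ is universal. Your version is more self-contained and makes the mechanism visible (relations of $\tG$ correspond to $2$-homotopies), but the step you dismiss as ``absorbed in the standard way'' is where the real work of that proof lies: one must convert an expression of $x_1\cdots x_n b^{-1}$ as a product of conjugates of rank-two relators into an explicit chain of elementary $2$-homotopies, and deal with stuttering galleries arising from letters in $\widetilde{B}$; that argument should be written out rather than asserted. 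The paper's route outsources exactly this difficulty to the existence theorem for universal $2$-covers and the lifting machinery of Section 3. A small point in your favour: you make the ``in particular'' clause explicit (freeness of the $\mathrm{Ker}(\pi)$-action on chambers, using $\mathrm{Ker}(\pi)\cap\widetilde{B}=1$ and normality of $\mathrm{Ker}(\pi)$), whereas the paper leaves it implicit.
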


\begin{proof}
Set $\widetilde{\Cc} = \Cc(\Ps(\tG))$ and $\Cc = \Cc(\Ps(G))$. Let $f: \widetilde{\Cc} \rightarrow \Cc$ be the $2$-covering defined as in Lemma \ref{morparb}. Let $\vp : \wCc \rightarrow \Cc$ be the universal $2$-covering of $\Cc$, which exists by \cite[5.1]{titslocal}. According to Proposition \ref{12.13}, the group $G \le \Aut(\Cc)$ lifts through $\vp$ to a subgroup $\wG$ of $\Aut(\wCc)$. The action of $\wG$ on $\wCc$ is chamber transitive; this is a direct consequence of Proposition \ref{12.13}(ii) and of Corollary \ref{liftcrit} (or Proposition \ref{norcov}(i)). Let $\hat{c}$ be a chamber of $\wCc$ with $\vp(\hat{c}) =c$, where $c$ is the chamber corresponding to $B$ in $\Ps(G)$. Let $\Ps(\wG)=(\widehat{B}, \widehat{G}_i\;; i \in I)$ be the parabolic system defined by $\hat{c}$ in $\wG$, where $\widehat{B}={\rm Stab}_{\wG}(\hat{c})$ and $\wG_i$ are the stabilizers in $\wG$ of the $i$-panels containing $\hat{c}$. Another application of Proposition \ref{12.13} gives a chamber system isomorphism $\Cc \simeq \wCc /\Aut(\vp)$ and a group isomorphism $\Aut( \vp) \simeq \pi^2 (\Cc, c)$.

The canonical projection $\psi: \wG \rightarrow G$ induces a $2$-morphism of parabolic systems $\Ps(\wG) \rightarrow \Ps(G)$. Moreover, $\psi$ is an isomorphism when restricted to $\widehat{B}, \wG_i, \wG_{ij}$ with $i, j \in I$; see also \cite[Exercise 8, Chp. 4]{ronanbook}. We can identify the subgroups $\widehat{B}, \wG_i$ and $\wG_{ij}$ of $\Ps(\wG)$ with their counterparts in $\Ps(G)$ and we can regard $\Ps(G)$ as a family of subgroups of $\wG$. As $\tG$ is the universal completion of the family $\Ps$, there is a unique surjective homomorphism $\zeta: \tG \rightarrow \wG$ inducing the identity isomorphism on each of the subgroups $B, G_i, G_{ij}$ with $i, j \in I, i \ne j$. This group homomorphism induces a $2$-covering $\zeta_*: \widetilde{\Cc} \rightarrow \wCc$, and since $\wCc$ is a universal $2$-covering, it follows that $\widetilde{\Cc}$ and $\wCc$ are isomorphic.
\end{proof}

\section{Complexes of groups and saturated fusion systems}

In this section, we consider discrete groups that contain finite Sylow $p$-subgroups. In particular, we investigate the case when $G$ is obtained as a completion of a diagram of finite groups and using the properties of an associated chamber system we identify conditions under which the fusion system $\Ff_S(G)$ is saturated.

\begin{hyp}\label{chamber}
Let $\Aa = (\Gs, \Upsilon)$ be a diagram of groups with $\Gs = \{ B, G_i, G_{ij}\;; \; i, j \in I, i\ne j \}$, a collection of finite groups, $\Upsilon= \{ \tau_B^{G_i}: B \rightarrow G_i, \; \tau_{G_i}^{G_{ij}}: G_i \rightarrow G_{ij}\;; \; i, j \in I\}$, a family of inclusion maps, and $I$ a finite index set. Let $G$ denote a completion of $\Aa$, as defined in \ref{amalgam}. We shall assume that this completion is faithful, so $B, G_i, G_{ij}$, with $i, j \in I$ can be regarded as subgroups of $G$. To $\Gs$ and $G$ we associate a chamber system $\Cc$ in the standard way. The chambers of $\Cc$ are the left cosets of $B$ in $G$, two chambers $gB$ and $hB$ are $i$-adjacent if $gG_i = hG_i$. The group $G$ acts chamber transitively, by left multiplication on $\Cc$. Further, we assume that $\Cc^P$ is connected for every finite $p$-subgroup of $G$. Since the empty set is disconnected, it follows that $\Cc^P \ne \emptyset$.
\end{hyp}

\begin{num}\label{repin}
For any pair of groups $H, G$ let $\Rep(H, G) := \Inn(G) \setminus \Inj(H, G)$ and let $[ \alpha ] \in \Rep(H, G)$ denote the class of $\alpha \in \Inj(H, G)$.
\end{num}

\begin{num}
If $H$ is a finite group, we denote by $\Rep(H, \Cc)$ the chamber system whose chambers are the elements of $\Rep(H,B)$. The $i$-panels are simply represented by the elements of
$$\Rep(H,B,G_i) : = \lbrace [\gamma] \in \Rep(H,G_i): \gamma \in \Inj (H,G_i) \;\; {\rm with} \;\; \gamma(H) \le B \rbrace.$$
Two chambers $[\alpha]$ and $[\beta ]$ are $i$-adjacent if $[\tau_B^{G_i} \fc \alpha ] = [\tau_B^{G_i} \fc \beta ]$ in $\Rep(H, G_i)$ which means that $\tau_B^{G_i} \fc \alpha = c_g \fc \tau_B^{G_i} \fc \beta$ for some element $g \in G_i$. In particular, the $i$-panel $[\gamma ]$ contains the chamber $[\alpha]$ if $[\tau_B^{G_i} \fc \alpha ] = [\gamma ]$ in $\Rep(H, G_i)$. Observe that $\Aut(H)$ acts on $\Rep(H, \Cc)$ via ${\vp}\cdot [\alpha]=[\alpha \fc \vp^{-1}]$, and if $H \le G$, then $N_G(H)$ acts on $\Rep(H, \Cc)$ via $g \cdot [\alpha] = [\alpha \fc c_{g^{-1}}]$.
\end{num}

The next result is Lemma $4.1$ from \cite{blo4} written in terms of chamber systems. We shall use the customary notation $\pi_0(X)$ from topology to denote the set of connected components of a space $X$.

\begin{lem}\label{4.1blo}
Let $G$ and $\Cc$ be as in \ref{chamber} with $P$ a $p$-subgroup of $B$. The following hold.
\begin{list}{\upshape\bfseries}
{\setlength{\leftmargin}{1cm}
\setlength{\labelwidth}{1cm}
\setlength{\labelsep}{0.2cm}
\setlength{\parsep}{0.5ex plus 0.2ex minus 0.1ex}
\setlength{\itemsep}{0cm}}
\item[(a).] There is an $N_G(P)$-equivariant isomorphism of chamber systems between $\Cc^P /C_G(P)$ and $\Rep(P, \Cc)_0$, the connected component of $\Rep(P, \Cc)$ which contains $[\tau_P^B]$, where $\tau_P^B$ denotes the inclusion map of $P$ into $B$.
\item[(b).] The natural map $\Phi_P: \pi_0 (\Rep(P, \Cc)) \rightarrow \Rep(P,B,G)$ is a bijection. In particular, $[\alpha]$ lies in the connected component of $[\tau_P ^B]$ if and only if $\alpha \in \Hom_G(P, B)$.
\end{list}
\end{lem}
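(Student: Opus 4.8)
The plan is to establish both parts through explicit morphisms of chamber systems obtained by restricting inner automorphisms of $G$, the connectivity of the fixed-point sets supplied by Hypothesis~\ref{chamber} being the essential input. Throughout I use that $gB \in \Cc^P$ if and only if $g^{-1}Pg \le B$, so that conjugation by $g^{-1}$ restricts to a monomorphism $c_{g^{-1}}|_P \colon P \to B$, together with the elementary fact that for $a, b \in G$ one has $c_a|_P = c_b|_P$ exactly when $b^{-1}a \in C_G(P)$. For part (a) I would define $\Theta \colon \Cc^P \to \Rep(P, \Cc)$ by $\Theta(gB) = [c_{g^{-1}}|_P]$. Replacing $g$ by $gb$ with $b \in B$ alters $c_{g^{-1}}|_P$ by an inner automorphism of $B$, so $\Theta$ is well defined on chambers; since $c_{z^{-1}}|_P = {\rm id}_P$ for $z \in C_G(P)$, it is constant on $C_G(P)$-orbits and descends to $\overline{\Theta}\colon \Cc^P/C_G(P) \to \Rep(P,\Cc)$. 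A direct check shows $\Theta$ is $N_G(P)$-equivariant for the action $n\cdot[\alpha]=[\alpha\fc c_{n^{-1}}]$ and sends the base chamber $B$ to $[\tau_P^B]$. To see $\Theta$ preserves $i$-adjacency, note that if $g^{-1}h \in G_i$ then with $t = g^{-1}h \in G_i$ one has $c_{g^{-1}}|_P = c_t \fc c_{h^{-1}}|_P$ as maps into $G_i$, which is exactly $i$-adjacency of $[c_{g^{-1}}|_P]$ and $[c_{h^{-1}}|_P]$. As $\Cc^P$ is connected, the image of $\Theta$ is connected and contains $[\tau_P^B]$, hence lies in $\Rep(P,\Cc)_0$.

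It then remains to show $\overline{\Theta}$ is an isomorphism onto $\Rep(P,\Cc)_0$. For injectivity, $\Theta(gB)=\Theta(hB)$ gives $c_{g^{-1}}|_P = c_{bh^{-1}}|_P$ for some $b\in B$, whence $hb^{-1}g^{-1}\in C_G(P)$ and $gB$, $hB$ lie in one $C_G(P)$-orbit; the same identity, applied with $t \in G_i$, shows $\overline{\Theta}$ reflects $i$-adjacency. For surjectivity I would verify that $\Theta$ carries the $i$-panel of each $gB \in \Cc^P$ onto the full $i$-panel of $\Theta(gB)$: a class $[c_{sg^{-1}}|_P]$ with $s\in G_i$ is realized by the chamber $gs^{-1}B$, which is $i$-adjacent to $gB$ and lies in $\Cc^P$. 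This panel-surjectivity lets one lift any gallery of $\Rep(P,\Cc)_0$ issuing from $[\tau_P^B]$ to a gallery of $\Cc^P$ issuing from $B$, and since $\Rep(P,\Cc)_0$ is connected, $\overline{\Theta}$ is onto. Together with the $N_G(P)$-equivariance this proves (a).

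For part (b), the map $\Phi_P$ sends the component of a chamber $[\alpha]$, with $\alpha\in\Inj(P,B)$, to the class $[\tau_B^G\fc\alpha]\in\Rep(P,B,G)$; it is well defined because $i$-adjacent chambers differ by conjugation in $G_i \le G$, and it is surjective since any $\gamma\in\Inj(P,G)$ with $\gamma(P)\le B$ already represents a chamber. The content is injectivity: suppose $\alpha,\beta\in\Inj(P,B)$ satisfy $\alpha=c_g\fc\beta$ in $G$ for some $g\in G$, and I must connect $[\alpha]$ and $[\beta]$ by a gallery in $\Rep(P,\Cc)$. Here I would pass to the $p$-subgroup $Q=\beta(P)\le B$, noting that $g^{-1}Qg=\alpha(P)\le B$ so that both $B$ and $g^{-1}B$ lie in $\Cc^Q$. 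Define $\Psi\colon \Cc^Q\to\Rep(P,\Cc)$ by $\Psi(xB)=[c_{x^{-1}}|_Q\fc\beta]$; exactly as for $\Theta$ this is a well-defined chamber morphism, and one computes $\Psi(B)=[\beta]$ and $\Psi(g^{-1}B)=[c_g|_Q\fc\beta]=[\alpha]$. Since $\Cc^Q$ is connected by Hypothesis~\ref{chamber}, the images $[\alpha]$ and $[\beta]$ lie in a single component, giving injectivity of $\Phi_P$. The final ``in particular'' then follows, because $[\alpha]\in\Rep(P,\Cc)_0$ if and only if $\Phi_P$ sends its component to $[\tau_P^G]$, i.e. if and only if $\alpha\in\Hom_G(P,B)$.

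The bookkeeping with conjugation conventions (left versus right cosets and the inner-automorphism identities above) must be handled carefully, but the genuine obstacle is the injectivity step of (b). The crucial idea is to realize the two $G$-conjugate monomorphisms $\alpha$ and $\beta$ as the images under $\Psi$ of two specific chambers of the fixed-point set of $Q=\beta(P)$ rather than of $P$ itself; this is precisely where the full strength of Hypothesis~\ref{chamber}---connectivity of $\Cc^Q$ for \emph{every} $p$-subgroup $Q$, not merely for $P$---is what forces $[\alpha]$ and $[\beta]$ into the same component.
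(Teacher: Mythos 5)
Your proposal is correct and follows essentially the same route as the paper: the map $\Theta$ is exactly the paper's $f_P(gB)=[c_{g^{-1}}]$, your panel-surjectivity step is the paper's observation that $\Img(f_P)$ is a union of connected components, and your injectivity argument for $\Phi_P$ via $\Psi$ on $\Cc^{\beta(P)}$ is the paper's reduction to $\Img(f_{\alpha(P)})$ pre-composed with the isomorphism $P\to\alpha(P)$. The only blemish is the harmless conjugation slip ``$g^{-1}Qg=\alpha(P)$'' (it should read $gQg^{-1}=\alpha(P)$), which your subsequent computation $\Psi(g^{-1}B)=[\alpha]$ already uses in the corrected form.
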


\begin{proof}
Note that if $K \le G$ and $gK \in (G/K)^P$ then $P \le gKg^{-1}$. Hence the class $[c_{g^{-1}}]$ in $\Rep(P,K)$ of $c_{g^{-1}}: P \rightarrow K$ given by $x \mapsto g^{-1} x g$, depends on the coset $gK$ only. Define the following map
$$f_P: \Cc^P \rightarrow \Rep (P, \Cc)$$
given by $f_P(gB)=[c_{g^{-1}}]$ where $[c_{g^{-1}}] \in \Rep (P, B)$ and $g \in G$.

{\it Step 1}: $f_P$ is a morphism of chamber systems.\\
We have to show that if $gB$ and $hB$ are $i$-adjacent chambers in $\Cc^P$, then $f_P(gB)=[c_{g^{-1}}]$ and $f_P(hB)=[c_{h^{-1}}]$ are $i$-adjacent in $\Rep(P, \Cc)$. Since $h^{-1}g \in G_i$ it follows that $h^{-1}=pg^{-1}$ for some $p \in G_i$ and that $gG_i g^{-1} = hG_i h^{-1}$. Hence $\tau_B^{G_i} \fc c_{h^{-1}} = \tau_B^{G_i} \fc c_p \fc c_{g^{-1}}$ which says that $[c_{h^{-1}}]=[c_{g^{-1}}]$ in $\Rep(P, B,  G_i)$ and therefore $f_P(gB)$ and $f_P(hB)$ are $i$-adjacent in $\Rep(P, \Cc)$.

{\it Step 2}: $\Img(f_P)$ is $\Rep(P, \Cc)_0$, the connected component of $[\tau _P^B]$ in $\Rep(P, \Cc)$.\\
Let $[\alpha] \in \Rep (P, B)$ be $i$-adjacent to some chamber $[c_{g^{-1}}] \in \Img(f_P)$. Then there exists $h \in G_i$ such that $\alpha = c_{h^{-1}} \fc c_{g^{-1}} = c_{(gh)^{-1}}$ and so $[\alpha] \in \Img(f_P)$. This shows that an $i$-panel lies in $\Img(f_P)$ if one of its chambers lies in $\Img(f_P)$ and therefore $\Img(f_P)$ is a union of connected components of $\Rep(P, \Cc)$. But since $\Cc^P$ is assumed to be connected, we obtain that $\Img(f_P)$ is connected and hence a connected component of $\Rep(P, \Cc)$.

{\it Step 3}: $f_P$ is an $N_G(P)$-equivariant map. \\
The group $N_G(P)$ acts on $\Rep (P, \Cc)$ via $g \cdot [\alpha] = [\alpha \fc c_{g^{-1}}]$ and it is easy to see that this action preserves $i$-adjacency. Since $G$ acts on $\Cc$, $N_G(P)$ acts on the chamber subsystem $\Cc^P$ fixed by the action of $P$. If $n \in N_G(P)$ and $gB$ is a chamber in $\Cc^P$ then
$$f_P(n(gB)) = f_P(ngB) = [c_{(ng)^{-1}}] = [c_{g^{-1}} \fc c_{n^{-1}}]= n \cdot [c_{g^{-1}}] = n \cdot f_P(gB)$$
Hence the chamber systems morphism $f_P$ is $N_G(P)$-equivariant. It also follows that $f_P$ is $C_G(P)$-equivariant and there is an induced chamber systems morphism between the quotients $\Cc^P / C_G(P) \longrightarrow \Rep (P, \Cc)/C_G(P).$ But $C_G(P)$-acts trivially on $\Rep(P, \Cc)$ by definition and therefore there is a morphism $\Cc^P / C_G(P) \longrightarrow \Rep (P, \Cc).$

{\it Step 4}: $f_P$ induces an isomorphism of chamber systems $\Cc^P/C_G(P) \simeq \Img(f_P)$.\\
Two chambers $gB$ and $hB$ of $\Cc^P$ have the same image $f_P(gB)=f_P(hB)$ if and only if $[c_{g^{-1}}] = [c_{h^{-1}}]$ in $\Rep(P, B)$. Then $c_{h^{-1}} = c_{y^{-1}} \fc c_{g^{-1}}$ with $y \in B$ and $h^{-1} = (gy)^{-1}z^{-1}$ where $z \in C_G(P)$. Thus $h=zgy$ which shows that $h \in C_G(P) gB$ and therefore $gB$ and $hB$ are in the same $C_G(P)$-orbit of $\Cc$.

Part $(a)$ of the Lemma follows from {\it Steps 1-4}. It remains to show that the map $\Phi_P$ is bijective. Surjectivity is clear. Let $\alpha, \beta \in \Inj(P,B)$ be such that $\Phi_P([\alpha]) = \Phi_P([\beta])$. Then $\beta \fc \alpha ^{-1} = c_g$ for some $g \in G$. So $[\beta \fc \alpha^{-1}] \in \Img (f_{\alpha (P)})$ and thus by {\it Step 2}, $[\beta \fc \alpha^{-1}]$ lies in the same connected component as $[\tau_{\alpha (P)}^B]$ in $\Rep (\alpha (P), \Cc)$. Therefore $[\alpha]$ and $[\beta]$ are in the same connected component in $\Rep (P, \Cc)$. It follows that $\Phi_P$ is injective.
\end{proof}

\begin{lem}\label{step1}
Let $G$ and $\Cc$ be as in \ref{chamber}. If $S$ is a Sylow $p$-subgroup of $B$ then $S$ is a Sylow $p$-subgroup of $G$.
\end{lem}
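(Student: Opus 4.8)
The plan is to verify directly the two conditions in the definition of ``$G$ has a finite Sylow $p$-subgroup $S$'' recalled earlier in the paper: that $S$ is a finite $p$-subgroup of $G$, and that every finite $p$-subgroup of $G$ is $G$-conjugate to a subgroup of $S$. The first condition is immediate, since $S \le B \le G$ and $S$ is a finite $p$-group by hypothesis; all the content lies in the second condition, and the whole argument hinges on the assumption in \ref{chamber} that $\Cc^P$ is nonempty for every finite $p$-subgroup $P$ of $G$.

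The key step is to translate nonemptiness of the fixed-point set into a conjugacy statement. First I would let $P$ be an arbitrary finite $p$-subgroup of $G$. By Hypothesis \ref{chamber} the chamber system $\Cc^P$ is connected, and since the empty set is disconnected, $\Cc^P \ne \emptyset$; so I may pick a chamber $gB$ fixed by $P$. Recalling that $G$ acts on $\Cc$ by left multiplication, $gB \in \Cc^P$ means $pgB = gB$ for all $p \in P$, that is $g^{-1}Pg = P^g \le B$. Thus $P$ is already $G$-conjugate into the finite group $B$. Now I would invoke ordinary Sylow theory inside the \emph{finite} group $B$: since $S \in \Syl_p(B)$ and $P^g$ is a $p$-subgroup of $B$, there is $b \in B$ with $(P^g)^b = b^{-1}g^{-1}Pgb \le S$. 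Hence $(gb)^{-1}P(gb) \le S$, so $P$ is $G$-conjugate to a subgroup of $S$, as required.

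I do not expect a genuine obstacle here: the force of the result is entirely carried by the hypothesis that the fixed-point chamber subsystem $\Cc^P$ is connected (hence nonempty), which is precisely what guarantees that every finite $p$-subgroup of the completion $G$ can be conjugated into the finite ``Borel'' $B$, after which finite Sylow theory finishes the job. The only point deserving care is the elementary verification that a $P$-fixed chamber $gB$ encodes the inclusion $P^g \le B$; once this dictionary between the $G$-action on $\Cc$ and conjugation into $B$ is set up, the conclusion that $S \in \Syl_p(B)$ forces $S$ to be a Sylow $p$-subgroup of the possibly infinite group $G$ follows formally.
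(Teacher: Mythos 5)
Your proof is correct and follows essentially the same route as the paper's: both extract a $P$-fixed chamber $gB$ from the nonemptiness of $\Cc^P$, deduce $P \le gBg^{-1}$, and finish with Sylow's theorem in the finite group $B$. The only cosmetic difference is that the paper conjugates $S$ into $gBg^{-1}$ rather than conjugating $P^g$ inside $B$.
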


\begin{proof}
Let $P$ be a finite $p$-subgroup of $G$. Since $\Cc^P$ is nonempty and $G$ is chamber transitive, there is an element $g \in G$ such that $gB \in \Cc^P$. Hence $P \le gBg^{-1}$ and since $gSg^{-1} \in \Syl_p(gBg^{-1})$ this shows that $P$ is $G$-conjugate to a subgroup of $S$.
\end{proof}

\begin{rem}
Henceforth it makes sense to consider the fusion system $\Ff_S(G)$ over $S$ realized by $G$, as defined in \ref{drealised}.
\end{rem}

\begin{lem}\label{step02}
Let $G$ and $\Cc$ be as in \ref{chamber}. Assume that $S$ is a Sylow $p$-subgroup of $B$ and set $\Ff = \Ff_S(G)$. Then every morphism in $\Ff$ is the composite of morphisms $\vp_1, \ldots, \vp_n$ with $\vp_i \in \Ff_S(G_{j_i})$ for some $j_i \in I$.
\end{lem}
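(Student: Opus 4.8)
The plan is to reduce the statement to the case of an $\Ff$-isomorphism and then to convert a gallery in $\Cc^P$ into a composite of conjugations, each of which is supported inside a single $G_j$. First I would reduce. By Definition \ref{dfs}(ii) every $\Ff$-morphism factors as an $\Ff$-isomorphism followed by an inclusion, and an inclusion $P' \hookrightarrow Q$ with $P' \le Q \le S$ is conjugation by $1 \in B \le G_j$, hence lies in $\Ff_S(B) \subseteq \Ff_S(G_j)$ for any $j \in I$. Thus it suffices to express an $\Ff$-isomorphism $\vp = c_g \colon P \to P'$, with $P, P' \le S$ and $P' = gPg^{-1}$, as a composite of the required form.

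Next I would produce a gallery. Since $P \le S \le B$ the base chamber $B$ lies in $\Cc^P$, and since $gPg^{-1} = P' \le B$ we have $P \le g^{-1}Bg$, so $g^{-1}B \in \Cc^P$ as well. By Hypothesis \ref{chamber} the fixed subsystem $\Cc^P$ is nonempty and connected, so I may choose a gallery $B = a_0B, a_1B, \ldots, a_nB = g^{-1}B$ lying entirely in $\Cc^P$, with $a_0 = 1$, $a_n = g^{-1}$, and $a_{k-1}B \sim_{i_k} a_kB$. Setting $x_k := a_{k-1}^{-1}a_k \in G_{i_k}$ gives $a_k = x_1 \cdots x_k$, and in particular $g = (x_1 \cdots x_n)^{-1}$.

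Then I would perform the Sylow adjustment and assemble the factors. For each $k$ the subgroup $P_k := a_k^{-1}Pa_k$ is a $p$-subgroup of $B$ (because $a_kB \in \Cc^P$), so by $S \in \Syl_p(B)$ there is $b_k \in B$ with $Q_k := b_kP_kb_k^{-1} \le S$; at the endpoints I take $b_0 = b_n = 1$, which is legitimate since $P_0 = P$ and $P_n = P'$ already lie in $S$. Define $\psi_k := c_{b_k} \fc c_{x_k^{-1}} \fc c_{b_{k-1}^{-1}} = c_{b_k x_k^{-1} b_{k-1}^{-1}} \colon Q_{k-1} \to Q_k$. Because $b_k, b_{k-1} \in B \le G_{i_k}$ and $x_k \in G_{i_k}$, each $\psi_k$ lies in $\Hom_{\Ff_S(G_{i_k})}(Q_{k-1}, Q_k)$. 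The conjugating element of $\psi_n \fc \cdots \fc \psi_1$ is $b_n x_n^{-1} b_{n-1}^{-1} \cdots b_1 x_1^{-1} b_0^{-1}$, which telescopes to $b_n (x_1 \cdots x_n)^{-1} b_0^{-1} = (x_1 \cdots x_n)^{-1} = g$, so $\psi_n \fc \cdots \fc \psi_1 = c_g = \vp$, as required.

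The main obstacle I anticipate is precisely the point handled in the third paragraph: the intermediate fixed subgroups $P_k$ land in $B$ but need not lie in $S$, so they cannot be used directly as source and target of $\Ff_S(G_{i_k})$-morphisms. This forces the Sylow corrections $b_k \in B$, and one must check that these corrections are absorbed by the $G_{i_k}$-conjugations (using $B \le G_{i_k}$) and that they cancel in the composite, leaving exactly $c_g$. Pinning $b_0 = b_n = 1$ at the two endpoints, where the subgroups are already in $S$, is what guarantees that no spurious factor survives. Everything else is the formal translation of the connectivity of $\Cc^P$, which is the essential input from Hypothesis \ref{chamber}; conceptually this is the same mechanism recorded in Lemma \ref{4.1blo}, viewed through the identification of $\Cc^P/C_G(P)$ with the component of $[\tau_P^B]$ in $\Rep(P,\Cc)$.
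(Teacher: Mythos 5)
Your proof is correct and follows essentially the same route as the paper's: both take a gallery in $\Cc^P$ from $B$ to $g^{-1}B$, factor $g^{-1}$ as a product of elements $x_k \in G_{i_k}$ read off the gallery, insert Sylow corrections $b_k \in B$ (pinned to $1$ at the endpoints) so that the intermediate conjugates land in $S$, and observe that the corrections telescope away in the composite. The only cosmetic differences are your explicit initial reduction to $\Ff$-isomorphisms via Definition \ref{dfs}(ii) (which the paper leaves implicit) and an inverse-versus-direct convention on the correcting elements $b_k$.
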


\begin{proof}
Let $\vp = c_g \in \Hom_{\Ff}(P, Q)$ for $P, Q \le S$ and $g \in G$. Then $P,\; ^gP \le S \le B$ and thus $P$ fixes the chambers $B$ and $g^{-1}B$ in $\Cc$. So since $\Cc^P$ is connected, there is a gallery $\gamma$ in $\Cc^P$ from $B$ to $g^{-1}B$. Recall that two chambers $xB$ and $yB$ are $i$-adjacent if and only if $x = yg_i$ for some $g_i \in G_i$. Hence we can write $\gamma = (g_0B, g_0g_1B, \ldots, g_0g_1\cdots g_nB)$ with $g_0=1$, $g_0g_1 \cdots g_n = g^{-1}$ and $g_i \in G_{j_i}$ for some $j_i \in I$. Set $\tilde{g}_i:= g_0\cdots g_i$. As $P$ stabilizes the gallery $\gamma$, $P^{\tilde{g}_i} \le B$. Since $S \in \Syl_p(B)$ we can choose $b_i \in B$ such that $P^{\tilde{g}_i b_i} \le S$ for every $1 \le i \le n$. As $P^{\tilde{g}_0} = P \le S$ and $P^{\tilde{g}_n} = P^{g^{-1}} \le S$ we may take $b_0 = 1 = b_n$. Set $h_i:= b_{i-1}^{-1} g_i b_i \in G_{j_i}$ for $i = 1, \ldots n$. Then:
$$h_1 \cdots h_i = (g_1b_1)\cdot (b_1^{-1}g_2b_2) \cdots (b_{i-1}^{-1} g_i b_i) = g_1 \cdots g_ib_i = \tilde{g}_i b_i$$
Hence $P^{h_1 \cdots h_i} \le S$ for all $i \le n$, and $g^{-1} = \tilde{g}_n = h_1 \ldots h_n$. Thus $c_g$ factors as $c_g = c_{h^{-1}_n} \fc \ldots \fc c_{h^{-1}_1}$ with $c_{h^{-1}_i}: P^{h_1 \cdots h_{i-1}} \rightarrow P^{h_1 \cdots h_i}$ a morphism in $\Ff_S(G_{j_i})$. This completes the proof.
\end{proof}

\begin{lem}\label{step2}
Let $G$ and $\Cc$ be as in \ref{chamber}. Let $S$ be a Sylow $p$-subgroup of $B$. Set $\Ff = \Ff_S(G)$. Assume that every subgroup $P$ of $S$ that is essential in $\Ff_S(G_i)$, for $i \in I$, is $\Ff$-centric. Then every morphism in $\Ff$ is a composite of restrictions of morphisms between $\Ff$-centric subgroups.
\end{lem}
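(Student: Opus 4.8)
The plan is to build this Alperin-type reduction by feeding Lemma \ref{step02} into the Alperin--Goldschmidt theorem for each local system. First I would take a morphism $\vp \in \Hom_\Ff(P,Q)$ and apply Lemma \ref{step02} to factor it as $\vp = \vp_n \fc \cdots \fc \vp_1$, where each $\vp_k \in \Ff_S(G_{j_k})$ for some $j_k \in I$. Since every $G_i$ is a finite group having $S$ as a Sylow $p$-subgroup, each $\Ff_S(G_{j_k})$ is a saturated fusion system by \ref{fgsatfs}, so the Alperin--Goldschmidt theorem (\ref{alperin}) applies inside it. I would then rewrite each $\vp_k$ as a composite of restrictions of automorphisms of $S$ and of automorphisms of fully $\Ff_S(G_{j_k})$-normalized $\Ff_S(G_{j_k})$-essential subgroups of $S$.

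The heart of the argument is to reinterpret each of these elementary pieces as (a restriction of) a morphism between subgroups that are $\Ff$-\emph{centric}, i.e.\ centric measured in the ambient system $\Ff$ rather than in the small system $\Ff_S(G_{j_k})$. For an automorphism of $S$ this is immediate: any $\vp \in \Hom_\Ff(S,S)$ is injective and hence satisfies $\vp(S)=S$ since $S$ is the full Sylow subgroup, so $C_S(\vp(S))=C_S(S)=Z(S)=Z(\vp(S))$, showing $S$ is $\Ff$-centric. For an automorphism $\psi \in \Aut_{\Ff_S(G_{j_k})}(R) \subseteq \Aut_\Ff(R)$ of an $\Ff_S(G_{j_k})$-essential subgroup $R$, the standing hypothesis of the lemma delivers exactly what is needed, namely that $R$ is $\Ff$-centric; thus $\psi$ is a morphism $R \to R$ in $\Ff$ between $\Ff$-centric subgroups.

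Assembling the pieces, each factor $\vp_k$ is a composite of restrictions of morphisms between $\Ff$-centric subgroups, and therefore so is the composite $\vp = \vp_n \fc \cdots \fc \vp_1$, which is the assertion. The only conceptual subtlety — and the step I would flag as the crux rather than an obstacle — is the mismatch of frames: essentiality is detected in the saturated subsystem $\Ff_S(G_{j_k})$, while centricity is required in $\Ff$, and the hypothesis is precisely the bridge translating the former into the latter. Everything else is bookkeeping: confirming that the automorphisms of $S$ produced by \ref{alperin} already have $\Ff$-centric source and target, which reduces to the elementary fact recorded above that $S$ itself is $\Ff$-centric, and observing that restricting a morphism between $\Ff$-centric subgroups keeps it of the required form.
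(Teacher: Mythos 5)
Your proposal is correct and follows essentially the same route as the paper: factor the morphism via Lemma \ref{step02}, apply the Alperin--Goldschmidt theorem in each saturated system $\Ff_S(G_{j_k})$, and then use the hypothesis to convert $\Ff_S(G_{j_k})$-essentiality into $\Ff$-centricity, together with the observation that $S$ itself is $\Ff$-centric. No substantive differences.
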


\begin{proof}
The $\Ff$-morphism $\vp = c_g \in \Hom_{\Ff} (P, Q)$ is a composite of morphisms $\vp_1, \ldots, \vp_n$ with $\vp_i \in \Ff_S(G_{j_i})$, for $j_i \in I$; see Lemma \ref{step02}. Each group $G_{j_i}$ is finite, $S$ is a Sylow $p$-subgroup of $G_{j_i}$ and therefore the fusion systems $\Ff_i=\Ff_S(G_{j_i})$ are all saturated. By an application of Alperin-Goldschmidt theorem for fusion systems, see \ref{alperin}, we obtain that each $\vp_i$ can be written as a composite of restrictions of automorphisms of $S$ and of automorphisms of fully $\Ff_S(G_{j_i})$-normalized $\Ff_S(G_{j_i})$-essential subgroups of $S$. But $S$ itself is $\Ff$-centric while $\Ff_S(G_{j_i})$-essential subgroups are $\Ff$-centric for all $j_i \in I$, by assumption. Therefore the Lemma is proved.
\end{proof}

\begin{prop}\label{step3}
Let $G$ and $\Cc$ be as in \ref{chamber}. Assume that $S$ is a Sylow $p$-subgroup of $B$ and set $\Ff = \Ff_S(G)$. Let $P$ be any subgroup of $S$ that is $\Ff$-centric and fully $\Ff$-normalized. Suppose that:
\begin{list}{\upshape\bfseries}
{\setlength{\leftmargin}{1cm}
\setlength{\labelwidth}{1cm}
\setlength{\labelsep}{0.2cm}
\setlength{\parsep}{0.5ex plus 0.2ex minus 0.1ex}
\setlength{\itemsep}{0cm}}
\item[${\rm(i).}$]  the group $\Aut_G(P)$ acts chamber transitively on $\Cc^P/C_G(P)$;
\item[${\rm(ii).}$] given any $p$-subgroup $R$ of $\Aut_G(P)$, the chamber subsystem of $\Cc^P/C_G(P)$ fixed by the action of $R$ is connected.
\end{list}
Then $\Aut_S(P)$ is a Sylow $p$-subgroup of $\Aut_{\Ff}(P)$.
\end{prop}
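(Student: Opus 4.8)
The plan is to reduce everything to a computation inside the \emph{finite} group $K := \Aut_G(P) = \Aut_{\Ff}(P) = N_G(P)/C_G(P)$, which is finite because it embeds into $\Aut(P)$ and $P$ is finite. Here $\Aut_S(P) = N_S(P)C_G(P)/C_G(P)$ is a $p$-subgroup of $K$, so it suffices to prove $|\Aut_S(P)| = |K|_p$. The group $K$ acts on $\Cc^P/C_G(P)$, and the first step is to identify the stabilizer in $K$ of the base chamber $\bar c$, the image of the coset $B$. A short coset calculation shows that for $n \in N_G(P)$ the element $nC_G(P)$ fixes $\bar c$ exactly when $n \in C_G(P)N_B(P)$; since $C_G(P)N_B(P) \subseteq N_G(P)$, this gives ${\rm Stab}_K(\bar c) = N_B(P)C_G(P)/C_G(P) \cong \Aut_B(P)$.

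The second step locates $\Aut_S(P)$ inside this stabilizer. As $B$ is finite with $S \in \Syl_p(B)$, the fusion system $\Ff_S(B)$ is saturated, and since $\Hom_{\Ff_S(B)}(P,S) \subseteq \Hom_{\Ff}(P,S)$, the hypothesis that $P$ is fully $\Ff$-normalized forces $P$ to be fully $\Ff_S(B)$-normalized. Applying condition (I) of Definition \ref{dsfs} to the saturated system $\Ff_S(B)$ then yields $\Aut_S(P) \in \Syl_p(\Aut_B(P))$, that is $|\Aut_S(P)| = |\Aut_B(P)|_p$.

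The heart of the argument is to feed the two hypotheses through this picture. I would pick a Sylow $p$-subgroup $T$ of $K$ with $\Aut_S(P) \le T$. By hypothesis (ii) applied to $R = T$, the fixed subsystem $(\Cc^P/C_G(P))^T$ is connected, hence nonempty (the empty chamber system being disconnected, as recorded in \ref{chamber}); so $T$ fixes some chamber $\bar d$. Chamber transitivity in hypothesis (i) supplies $k \in K$ with $k\bar d = \bar c$, whence $kTk^{-1} \le {\rm Stab}_K(\bar c) = \Aut_B(P)$. Thus $\Aut_B(P)$ contains a $p$-subgroup of order $|T| = |K|_p$, forcing $|K|_p \le |\Aut_B(P)|_p = |\Aut_S(P)|$; combined with $|\Aut_S(P)| \le |T| = |K|_p$ this gives $\Aut_S(P) = T \in \Syl_p(K)$, as desired.

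I expect the main obstacle to be the combined use of hypotheses (i) and (ii): connectivity of the fixed-point subsystem is what guarantees that an arbitrary Sylow $p$-subgroup of $K$ fixes a chamber at all, and chamber transitivity is what then conjugates that chamber to the base chamber and hence the Sylow subgroup into $\Aut_B(P)$, where $\Aut_S(P)$ is already known to be Sylow. The remaining ingredients---the stabilizer identification and the passage from ``fully $\Ff$-normalized'' to ``fully $\Ff_S(B)$-normalized''---are routine, and the $\Ff$-centricity of $P$ does not intervene in this particular step beyond being part of the standing hypotheses.
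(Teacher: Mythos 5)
Your proposal is correct and follows essentially the same route as the paper: identify the stabilizer of the base chamber of $\Cc^P/C_G(P)$ as $\Aut_B(P)$ (the paper does this in the equivariantly isomorphic model $\Rep(P,\Cc)_0$ from Lemma \ref{4.1blo}), use saturation of $\Ff_S(B)$ to place $\Aut_S(P)$ as a Sylow $p$-subgroup of $\Aut_B(P)$, and then run the Sylow-type argument of Lemma \ref{step1} with hypotheses (i) and (ii) to conjugate an arbitrary $p$-subgroup of $\Aut_G(P)$ into that stabilizer. Your explicit coset computation of the stabilizer and the final order-counting with a chosen Sylow subgroup $T \supseteq \Aut_S(P)$ are just spelled-out versions of steps the paper leaves implicit.
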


\begin{proof}
Let $P$ be a $\Ff$-centric fully $\Ff$-normalized subgroup of $S$. The group $\Aut_G(P)$ acts on $\Cc^P/C_G(P)$ and according to {\it Steps 3} and {\it 4} from the proof of Lemma \ref{4.1blo}, it also acts on $\Rep (P, \Cc)_0$. Given $\vp \in \Aut_G(P)$, observe that $\vp \cdot [\tau_P^B ] = [\tau_P^B \fc \vp^{-1}] = [\tau_P^B]$ if and only if $\tau_P^B \fc \vp = c_b \fc \tau_P^B$ for some $b \in B$. Hence $\vp = c_{b\;|P}$ and the stabilizer of the chamber $[\tau_P^B]$ is $\Aut_B(P)$. As $P$ is fully $\Ff$-normalized, it is fully $\Ff_B(S)$-normalized and $\Aut_S(P)$ is a Sylow $p$-subgroup of $\Aut_{\Ff_S(B)}(P) = \Aut_B(P)$. To obtain the conclusion, apply the argument from the proof of Lemma \ref{step1}, with $\Cc^P/C_G(P)$ in place of $\Cc$ and with $\Aut_G(P)$ in place of $G$.
\end{proof}

\begin{prop}\label{step45}
Let $G$ and $\Cc$ be as in \ref{chamber}. Assume that $S$ is a Sylow $p$-subgroup of $B$ and set $\Ff = \Ff_S(G)$. Let $P$ be a $\Ff$-centric subgroup of $S$. Assume that if $R$ is a $p$-subgroup of $\Aut_G(P)$ then the chamber subsystem fixed by the action of $R$ on $\Cc^P/C_G(P)$ is connected. Then for any $\vp \in \Hom_{\Ff}(P, S)$ there is a morphism $\overline{\vp} \in \Hom_{\Ff}(N_{\vp}, S)$ with the property that $\overline{\vp}_{|P} = \vp$.
\end{prop}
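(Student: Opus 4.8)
The statement to be proven is exactly the extension axiom (II) of Definition \ref{dsfs}, restricted to $\Ff$-centric $P$ (note that an $\Ff$-centric subgroup is automatically fully $\Ff$-centralized, so the hypothesis of (II) is met). Since $\Ff=\Ff_S(G)$, I would write $\vp=c_g$ for some $g\in G$ and set $P'=\vp(P)={}^gP\le S$. Because $c_{\overline g}|_P=\vp$ holds precisely when $\overline g\in g\,C_G(P)$, the whole problem reduces to producing some $\overline g\in g\,C_G(P)$ with ${}^{\overline g}N_\vp\le S$; then $\overline\vp:=c_{\overline g}$ does the job. Before starting I would record one consequence of centricity, obtained by the argument of Lemma \ref{step1}: since $P$ (hence every $G$-conjugate of it) is $\Ff$-centric, i.e. $p$-centric in $G$ by \ref{fgsatfs}, every finite $p$-subgroup of $C_G(P)$ is already contained in $Z(P)$. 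This will be the device that replaces ordinary Sylow theory, which is unavailable because $C_G(P)$ and the transporter sets are infinite.

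The chamber-theoretic heart is to locate a single chamber fixed by $N_\vp$ in the right $C_G(P)$-orbit. Using Lemma \ref{4.1blo} I identify $\Cc^P/C_G(P)$ with $\Rep(P,\Cc)_0$, and I put $R=\Aut_{N_\vp}(P)$, a $p$-subgroup of $\Aut_G(P)$. A short computation from the definition of $N_\vp$ in \ref{nphi} shows that both the base chamber $[\tau_P^B]$ and the chamber $[c_g]$ representing the class of $g^{-1}B$ are fixed by $R$: for $x\in N_\vp$ one has $\vp\fc c_x\fc\vp^{-1}=c_s|_{P'}$ with $s\in N_S(P')$, and this is exactly what is needed to fix $[c_g]$. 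The hypothesis then guarantees that $(\Cc^P/C_G(P))^R$ is connected, so there is a gallery joining these two $R$-fixed chambers. The plan is to transport this gallery to $\Cc^{N_\vp}$: I would show that the set of $R$-fixed chambers possessing an $N_\vp$-fixed preimage is closed under $i$-adjacency inside $(\Cc^P/C_G(P))^R$, hence is a union of connected components; as it contains $[\tau_P^B]$ and the system is connected, it is everything. In particular $g^{-1}B$ then has an $N_\vp$-fixed chamber in its $C_G(P)$-orbit, which translates into an $\overline g_1\in g\,C_G(P)$ with ${}^{\overline g_1}N_\vp\le B$.

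I expect the lifting step to be the main obstacle, precisely because $C_G(P)$ is infinite and an $R$-fixed chamber in the quotient need not lift to an $N_\vp$-fixed chamber. The escape is that every panel is finite, of size $[G_i:B]$, so at each adjacency the action of $N_\vp$ really occurs on a finite set inside one panel. For the inductive step, given an $N_\vp$-fixed lift $d=hB$ of $\bar d$ and an $R$-fixed $\bar d'\sim_i\bar d$, I would conjugate everything into the finite group $G_i$ by $h^{-1}$, writing $P_1={}^{h^{-1}}P$, $N_1={}^{h^{-1}}N_\vp$ and $B_2={}^{g_i}B$ for the relevant $i$-adjacent coset. The $R$-fixedness of $\bar d'$ yields $N_1\le C_{G_i}(P_1)\,B_2$, and since $N_1$ normalizes $P_1$, Dedekind's modular law refines this to $N_1\le C_{G_i}(P_1)\,N_{B_2}(P_1)$. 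Here the centricity device enters: $Z(P_1)\in\Syl_p(C_{G_i}(P_1))$, and $Z(P_1)\le P_1\le B_2$ forces $N_{B_2}(P_1)$ to contain a Sylow $p$-subgroup of $C_{G_i}(P_1)\,N_{B_2}(P_1)$. Sylow's theorem in this finite group then conjugates the $p$-group $N_1$ into $B_2$, and the standard remark that a conjugator may be adjusted by a factor from $N_{B_2}(P_1)$ lets me take the conjugating element inside $C_{G_i}(P_1)$, producing the required $i$-adjacent $N_\vp$-fixed lift of $\bar d'$.

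Finally I would upgrade $B$ to $S$. From ${}^{\overline g_1}N_\vp=:M'\le B$ with $P'\le M'$, I pass to the finite group $H_B=N_S(P')\,C_B(P')$, inside which the centricity device again gives that $N_S(P')$ is a Sylow $p$-subgroup (because $Z(P')$ exhausts the $p$-part of $C_B(P')$, so $|N_S(P')|=|H_B|_p$). The image of $M'$ in $\Aut_G(P')$ lies in $\Aut_S(P')$, so $M'\le H_B$; Sylow's theorem conjugates $M'$ into $N_S(P')$, and adjusting the conjugator into $C_B(P')$ keeps $\vp$ undisturbed. Composing this element with $\overline g_1$ gives the desired $\overline g\in g\,C_G(P)$ with ${}^{\overline g}N_\vp\le S$, whence $\overline\vp=c_{\overline g}\in\Hom_{\Ff}(N_\vp,S)$ extends $\vp$.
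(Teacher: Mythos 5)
Your proposal is correct and follows essentially the same route as the paper: both arguments propagate an $N_{\vp}$-fixed lift along a gallery in the connected chamber system $(\Cc^P/C_G(P))^R$ with $R=\Aut_{N_\vp}(P)$, and at each panel use that $Z$ of a conjugate of $P$ exhausts the $p$-part of its centralizer (centricity) together with a Sylow argument inside the finite group $G_i$ to adjust the lift by a centralizing element. The only differences are cosmetic: you phrase the induction in terms of cosets in $\Cc^{N_\vp}$ rather than the restriction map $\Rep(N_\vp,\Cc)_0\rightarrow\Rep(P,\Cc)_0$, and you finish with a direct Sylow computation in $N_S(P')C_B(P')$ where the paper invokes the extension axiom of the saturated system $\Ff_S(B)$.
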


\begin{proof}
The proof of the present Proposition will be achieved in two steps. We start with some necessary notation. Let $\vp \in \Hom_{\Ff}(P, S)$ where $P$ is $\Ff$-centric and let $N_{\vp}$ be as in \ref{nphi}. Set $K = \Aut_{N_{\vp}}(P) = N_{\vp}/Z(P)$ and observe that since $P \le N_{\vp}$, the subgroup $N_{\vp}$ is also $\Ff$-centric. Consider the map
$$\Gamma  : \Rep(N_{\vp}, \Cc)_0 \longrightarrow \Rep(P, \Cc)_0$$
induced by the restriction $N_{\vp} \rightarrow P$, between the connected components of the inclusion maps $\tau_{N_{\vp}}^B$ and $\tau_P^B$. The map is well-defined by Lemma \ref{4.1blo}(b). Let $\Rep(P, \Cc)^K_0$ denote the chamber subsystem of $\Rep(P, \Cc)_0$ fixed by $K$.

{\it Step 1}: $\Img(\Gamma) = \Rep(P, \Cc)^K_0$.\\
Let $[\psi] \in \Rep(N_{\vp}, \Cc)_0$ and let $h \in N_{\vp}$ and consider the following commutative diagram
\begin{center}
$\begin{CD}
P  @ >{\tau_P^{N_{\vp}}}>>{N_{\vp}} @> {\psi}>> {B}\\
 @V{c_h}VV      @V{c_h}VV   @V{c_{\psi(h)}}VV    \\
P  @ >{\tau_P^{N_{\vp}}}>> {N_{\vp}} @> {\psi}>> {B}\\
\end{CD}$
\end{center}
from which we see that $c_{\psi(h)} = \psi_{|P} \fc c_h \fc \psi^{-1}_{|P}$. Thus the $\Gamma$-image of $[\psi]$ lies in $\Rep(P, \Cc)^K_0$.

We will show that given any chamber $[\alpha]$ in $\Rep(P, \Cc)_0^K$ and any $i$-panel $[\beta]$ in $\Rep(N_{\vp}, \Cc)_0$ with the property that $[\beta_{|P}]$ is a panel of $[\alpha]$, the chamber $[\alpha]$ lies in $\Img (\Gamma)$. This will finish the proof of this Step since $(\Cc^P/C_G(P))^K$ which is isomorphic as a chamber system to $\Rep(P, \Cc)^K_0$, by Lemma \ref{4.1blo}, is assumed to be connected.

Denote $K' = \alpha K \alpha^{-1} \le \Aut(P')$ with $P'=\alpha(P)$. Set $N' = \{ a \in N_B(P') \; | \; c_a \in K' \}$ and observe that $\Aut_{N'}(P') \le K'$. We shall prove that $\Aut_{N'}(P') = K'$. Since $[\alpha]$ is fixed by $K$, for any $h \in N_{\vp}$, $[\alpha \fc c_h] = [\alpha]$ in $\Rep(P,B)$, thus there is an element $b \in B$ such that $c_b = \alpha \fc c_h \fc \alpha^{-1}:P' \rightarrow P'$, using that $c_h \in K$. Therefore $K' \le \Aut_B(P')$ and in fact $K' \le \Aut_{N_B(P')}(P')$. Let $f \in K'$ so there exists $a \in N_B(P')$ with $f = c_a$. This implies $a \in N'$ and $c_a \in \Aut_{N'}(P')$ and therefore $K' \le \Aut_{N'}(P')$. Next we note that $K' = \Aut_Q(P')$, where $Q \in \Syl_p(N')$ which is true since $K'$ is a $p$-group.

Recall that $\beta : N_{\vp} \rightarrow G_i$, and since $[\alpha]$ lies in the $i$-panel $[\beta_{|P}]$, it follows that $\beta_{|P} = c_g \fc \alpha$, for some $g \in G_i$. Given any $z \in Q$, the morphism $c_g \fc c_z \fc c_{g^{-1}} : \beta (P) \rightarrow \beta(P)$ corresponds to conjugation by an element in $gQg^{-1}$. It follows from the definition of $K'$ that for any $z \in Q$ there is an element $y \in N_{\vp}$ such that $c_z = \alpha \fc c_y \fc \alpha^{-1}$. Therefore
$c_g \fc c_z \fc c_{g^{-1}} = c_g \fc \alpha \fc c_y \fc \alpha^{-1} \fc c_{g^{-1}} = \beta \fc c_y \fc \beta^{-1} = c_{\beta(y)}$ with $\beta (y) \in \beta(N_{\vp})$ and it follows that $g Q g^{-1} \le \beta (N_{\vp}) \cdot C_{G_i}(\beta (P)): = H$.

Observe that if $P$ is $\Ff$-centric then $P$ is $\Ff_S(G_i)$-centric and therefore $P$ is $p$-centric in $G_i$. Therefore $\beta(P)$ is $p$-centric in $G_i$. We claim that both $gQg^{-1}$ and $\beta(N_{\vp})$ are Sylow $p$-subgroups of $H$. Notice that $Z(\beta(P)) = \beta(Z(P)) \in \Syl_p(C_{G_i}(\beta(P)))$ so the unique Sylow $p$-subgroup of $C_{G_i}(\beta(P))$ is already in $\beta(N_{\vp})$. Next note that $K = \Aut_{N_{\vp}}(P) = N_{\vp}/Z(P) \simeq K' = \Aut_Q(P') = Q /Z(P')$ so $|Q| = |N_{\vp}| = |\beta(N_{\vp})|$. The claim is proved; $gQg^{-1}$ and $\beta(N_{\vp})$ are Sylow $p$-subgroups of $H$.

It follows that there is an element $h \in C_{G_i}(\beta(P))$ with the property that $h(gQg^{-1})h^{-1} = \beta(N_{\vp})$ so $Q = c_{(hg)^{-1}}(\beta(N_{\vp}))$. Set $\overline{\alpha} = c_{(hg)^{-1}} \fc \beta: N_{\vp} \rightarrow Q$ with $hg \in G_i$. Finally observe that $\overline{\alpha}_{|P} = c_{(hg)^{-1}} \fc \beta_{|P} = c_{g^{-1}} \fc \beta_{|P} = \alpha$ since $h$ centralizes $\beta(P)$. Thus $[\overline{\alpha}] = [\beta]$ in $\Rep(N_{\vp}, B, G_i)$ showing that $[\alpha]$ is in the image of the restriction map.

{\it Step 2}: Given $\vp \in \Hom_{\Ff}(P, S)$ with $P$ is $\Ff$-centric, then there is a morphism $\overline{\vp} \in \Hom_{\Ff}(N_{\vp}, S)$ with $\overline{\vp}_{|P} = \vp$.\\
Consider the chamber $[\vp]$ which lies in $\Rep(P, \Cc)_0^K$ since the automorphisms in $K$ are induced by elements of $N_{\vp}$. By {\it Step 1} there is a morphism $\psi: N_{\vp} \rightarrow B$ with $[\psi] \in \Rep(N_{\vp}, \Cc)_0$ and $\vp = c_{g^{-1}} \fc \psi_{|P}$ for some $g \in B$. We will use the properties of the saturated fusion system $\Ff_S(B)$ and the fact that $c_{g^{-1}}$ is a morphism in this fusion system. We may assume that $\psi(N_{\vp}) \le S$; if this is not the case choose an element $h \in B$ with $h \psi(N_{\vp})h^{-1} \le S$ and replace $\psi$ by $\psi' = c_h \fc \psi$. This is possible since $[\psi] = [\psi']$ in $\Rep(N_{\vp}, B)$.

Next we prove that $\psi(N_{\vp}) \le N_{\vp \fc \psi^{-1}} = N_{c_{g^{-1}}}$ with $g \in B$. Let $z \in \psi(N_{\vp})$ so $z = \psi (y)$ for $y \in N_{\vp}$ and consider $\vp \fc \psi^{-1} \fc c_z \fc \psi \fc \vp^{-1} = \vp \fc c_y \fc \vp^{-1} \in \Aut_S(\vp(P))$ since $y \in N_{\vp}$. Hence $z \in N_{\vp \fc \psi^{-1}}$.

Thus $\vp \fc \psi^{-1} = c_{g^{-1}}$ extends to a map $\chi: \psi(N_{\vp}) \rightarrow S$ in $\Ff_S(B)$ and therefore $\overline{\vp}:= \chi \fc \psi : N_{\vp} \rightarrow S$ is such that $\chi \fc \psi (y) = \vp (y)$ for all $y \in P$. This ends the proof of the second saturation condition for $\Ff_S(G)$.
\end{proof}

After assembling these results we obtain the following generalization to chamber systems of \cite[Theorem 4.2]{blo4}. Broto, Levi and Oliver considered in their construction the case when $\Cc$ was a finite tree of finite groups.

\begin{thm}\label{satfs}
Let $G$ and $\Cc$ be as in \ref{chamber}. Assume that $S$ is a Sylow $p$-subgroup of $B$ and set $\Ff = \Ff_S(G)$. Suppose the following hold.
\begin{list}{\upshape\bfseries}
{\setlength{\leftmargin}{1cm}
\setlength{\labelwidth}{1cm}
\setlength{\labelsep}{0.2cm}
\setlength{\parsep}{0.5ex plus 0.2ex minus 0.1ex}
\setlength{\itemsep}{0cm}}
\item[(a).] If $P$ is a subgroup of $S$ that is $\Ff$-centric and fully $\Ff$-normalized then $\Aut_G(P)$ acts chamber transitively on $\Cc^P/C_G(P)$.
\item[(b).] If $P$ is a subgroup of $S$ that is $\Ff$-centric and if $R$ is a $p$-subgroup of $\Aut_G(P)$ then $\left ( \Cc^P/C_G(P)\right )^R$ is connected.
\item[(c).] If $P \le S$ is an essential $p$-subgroup of $\Ff_S(G_i)$, for $i \in I$, then $P$ is $\Ff$-centric.
\end{list}
Then $\Ff$ is a saturated fusion system over $S$.
\end{thm}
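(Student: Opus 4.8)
The plan is to assemble the lemmas and propositions of this section, which between them have done essentially all of the work; the proof of the theorem is then a bookkeeping exercise that verifies the two saturation axioms of Definition \ref{dsfs} by reducing them to the class of $\Ff$-centric subgroups. First I would invoke Lemma \ref{step1} to guarantee that $S$ is a Sylow $p$-subgroup of $G$, so that $\Ff = \Ff_S(G)$ is genuinely a fusion system over $S$ and the notions of fully $\Ff$-normalized subgroup, $\Ff$-centric subgroup, and so on are available.

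The structural input is the factorization property. Combining hypothesis (c) with Lemma \ref{step2}, every morphism of $\Ff$ is a composite of restrictions of morphisms between $\Ff$-centric subgroups of $S$. This is precisely the condition that permits saturation to be tested on the full subcategory of $\Ff$-centric subgroups rather than on all subgroups of $S$.

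Next I would check that both saturation axioms of Definition \ref{dsfs} hold for $\Ff$-centric subgroups. For axiom (I), let $P$ be $\Ff$-centric and fully $\Ff$-normalized. The requirement that $P$ be fully $\Ff$-centralized is automatic here: by Definition \ref{fsubgroups} we have $C_S(\vp(P)) = Z(\vp(P))$ for every $\vp \in \Hom_{\Ff}(P,S)$, and since $\vp$ restricts to an isomorphism $Z(P) \to Z(\vp(P))$ it follows that $|C_S(\vp(P))| = |Z(P)| = |C_S(P)|$ is constant across the $\Ff$-conjugacy class of $P$. The remaining assertion of axiom (I), that $\Aut_S(P)$ is a Sylow $p$-subgroup of $\Aut_{\Ff}(P)$, is exactly the conclusion of Proposition \ref{step3}, whose hypotheses (i) and (ii) are supplied verbatim by the present hypotheses (a) and (b). For axiom (II), given $\Ff$-centric $P$ and $\vp \in \Hom_{\Ff}(P,S)$, Proposition \ref{step45} --- whose single hypothesis is the connectedness statement (b) --- produces the required extension $\overline{\vp} \in \Hom_{\Ff}(N_\vp, S)$ with $\overline{\vp}_{|P} = \vp$.

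Finally, with the two axioms verified on $\Ff$-centric subgroups and the factorization property of Lemma \ref{step2} in hand, I would conclude saturation of $\Ff$ by the reduction argument of Broto--Levi--Oliver \cite{blo4}: a fusion system whose morphisms are all composites of restrictions of morphisms between $\Ff$-centric subgroups is saturated as soon as the two axioms hold on those subgroups. I expect this last step --- the justification that verification on the centric subcategory propagates to every subgroup of $S$ --- to be the only genuinely delicate point, and it is exactly where the generalization of \cite[Theorem 4.2]{blo4} is completed; everything upstream is already contained in Lemmas \ref{step1}, \ref{step02} and \ref{step2} and in Propositions \ref{step3} and \ref{step45}.
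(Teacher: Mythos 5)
Your proposal is correct and follows essentially the same route as the paper: Lemma \ref{step1} for the Sylow property, Lemmas \ref{step02} and \ref{step2} (via hypothesis (c)) for the factorization through $\Ff$-centric subgroups, Propositions \ref{step3} and \ref{step45} for the two saturation axioms on centric subgroups, and the reduction theorem to propagate saturation to all subgroups --- the paper cites \cite[Theorem 2.2]{bcglo1} for that last step rather than \cite{blo4}, but the content is the same. Your added observation that fully $\Ff$-centralized is automatic for $\Ff$-centric subgroups is correct and slightly more explicit than the paper.
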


\begin{proof}
Our proof is a compilation of the last two Propositions and three Lemmas. After we obtain that $S$ is a Sylow $p$-subgroup of $G$, see Lemma \ref{step1}, we show that every morphism in $\Ff$ can be written as a composition of restrictions of morphisms between $\Ff$-centric subgroups, see Lemmas \ref{step02} and \ref{step2}. According to a result of \cite[Theorem 2.2]{bcglo1}, it then suffices to verify the two saturation axioms in \ref{dsfs} for the collection of $\Ff$-centric subgroups only. The first saturation condition is proved in Proposition \ref{step3}, while the second one is proved in Proposition \ref{step45}.
\end{proof}

\section{Parabolic families for fusion systems}

In this section we discuss fusion systems $\Ff$ which contain families of subsystems $\{ \Ff_i\;; i \in I \}$ with certain properties, denoted below (F1 - F4). To such a fusion system we associate a discrete group $G$ and a chamber system $\Cc$, on which the group acts. We give some sufficient conditions $\Cc$ has to fulfill in order to ensure saturation of $\Ff$.

\begin{defn}\label{parffus}
Let $\Ff$ be a fusion system over a finite $p$-group $S$ and set $\Bb = N_{\Ff}(S)$. We say that {\it $\Ff$ has a family of parabolic subsystems} if $\Ff$ contains a collection $\{ \Ff_i; i \in I \}$ of saturated, constrained fusion subsystems, each of essential rank one\footnote{This means that there is one $\Ff_i$-conjugacy class of $\Ff_i$-essential subgroups.} with the following properties:
\vspace*{-.2cm}
\begin{list}{\upshape\bfseries}
{\setlength{\leftmargin}{1.4cm}
\setlength{\labelwidth}{1cm}
\setlength{\labelsep}{0.2cm}
\setlength{\parsep}{0cm}
\setlength{\itemsep}{0cm}}
\item[${\rm(F1).}$] $\Bb$ is a proper subsystem of $\Ff_i$ for all $i \in I$;
\item[${\rm(F2).}$] $\Ff = \langle \Ff_i; i \in I \rangle$ and no proper subset $\{\Ff_j ; j \in J \subset I \}$ generates $\Ff$;
\item[${\rm(F3).}$] $\Ff_i \cap \Ff_j = \Bb$ for any pair of distinct elements $\Ff_i$ and $\Ff_j$;
\item[${\rm(F4).}$] $\Ff_{ij} := \langle \Ff_i, \Ff_j \rangle$ is saturated constrained subsystem of $\Ff$ for all $i, j \in I$.
\end{list}
\end{defn}

\begin{prop}\label{diagfus}
Let $\Ff$ be a fusion system over a finite $p$-group $S$. If $\Ff$ contains a family of parabolic subsystems then there are $p'$-reduced $p$-constrained finite groups $B, G_i, G_{ij}$ that realize $\Bb, \Ff_i, \Ff_{ij}$ respectively (for $i,j \in I$), and injective group homomorphisms $\psi_i:B \rightarrow G_i$, $\psi_{ij}: G_i \rightarrow G_{ij}$ such that $\Aa= \{ (B, G_i, G_{ij}), (\psi_i, \psi_{ij});\; i, j \in I \}$ is a diagram of groups.
\end{prop}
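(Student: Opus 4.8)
The plan is to realize each of the constrained subsystems $\Bb$, $\Ff_i$, $\Ff_{ij}$ by a canonical $p'$-reduced $p$-constrained finite group via Theorem \ref{thmconstraint}, to produce the structure maps of the diagram from Proposition \ref{subconstr} together with the strong uniqueness of such realizations, and then to force the squares to commute. The principal difficulty is this last commutativity.

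First I would record the structural fact that $\Bb = N_{\Ff}(S) = N_{\Ff_i}(S) = N_{\Ff_{ij}}(S)$ for all $i,j$. Indeed, the morphisms of $N_\Ff(S)$ are exactly the restrictions of the elements of $\Aut_\Ff(S)$, so $\Aut_\Bb(S)=\Aut_\Ff(S)$; since $\Bb \subseteq \Ff_i \subseteq \Ff_{ij}\subseteq\Ff$ by (F1)--(F4), the inclusions $\Aut_\Bb(S)\subseteq\Aut_{\Ff_i}(S)\subseteq\Aut_{\Ff_{ij}}(S)\subseteq\Aut_\Ff(S)$ are forced to be equalities, and restricting these automorphisms to the subgroups of $S$ identifies the three normalizer systems. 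In particular $S$ is normal in $\Bb$, whence $O_p(\Bb)=S$. Applying Theorem \ref{thmconstraint} to $\Bb$, $\Ff_i$ and $\Ff_{ij}$ yields finite groups $B$, $G_i$, $G_{ij}$ with Sylow $p$-subgroup $S$ realizing them; each is $p'$-reduced $p$-constrained (its $O_{p'}$ centralizes $O_p$, hence is trivial). Moreover, since $\Bb=N_{\Ff_i}(S)=N_{\Ff_{ij}}(S)$, the standard identity $N_{\Ff_S(K)}(S)=\Ff_S(N_K(S))$ shows that $N_{G_i}(S)$ and $N_{G_{ij}}(S)$ are $p'$-reduced $p$-constrained realizations of $\Bb$ (using Lemma \ref{pprimep}).

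Next I would build the maps. For $\psi_{ij}:G_i\to G_{ij}$, the inclusion $\Ff_i\subseteq\Ff_{ij}$ and Proposition \ref{subconstr} furnish an overgroup $\widehat{G}_i$ of $S$ in $G_{ij}$ with $\Ff_S(\widehat{G}_i)=\Ff_i$; it is $p'$-reduced $p$-constrained by Lemma \ref{pprimep}, so the strong uniqueness of \cite[2.5]{afs1} gives an isomorphism $G_i\to\widehat{G}_i$ restricting to the identity on $S$, which composed with the inclusion produces an injection $\psi_{ij}$ with $\psi_{ij}|_S=\mathrm{incl}$ and image $\widehat{G}_i$. For $\psi_i:B\to G_i$ I use the canonical realization: $N_{G_i}(S)$ realizes $\Bb$, so strong uniqueness gives an isomorphism $B\to N_{G_i}(S)$, the identity on $S$, yielding $\psi_i$ with $\psi_i|_S=\mathrm{incl}$ and $\psi_i(B)=N_{G_i}(S)$.

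The remaining, and main, task is the commutativity $\psi_{ij}\fc\psi_i=\psi_{ji}\fc\psi_j$ as maps $B\to G_{ij}$. Writing $\theta=\psi_{ij}\fc\psi_i$, the isomorphism $\psi_{ij}$ fixes $S$, so it carries $\psi_i(B)=N_{G_i}(S)$ onto $N_{\widehat{G}_i}(S)=\widehat{G}_i\cap N_{G_{ij}}(S)$; since this last group is a $p'$-reduced $p$-constrained realization of $N_{\Ff_i}(S)=\Bb$ contained in $N_{G_{ij}}(S)$, which likewise realizes $\Bb$, the two have equal order and hence coincide. Thus both $\theta$ and $\theta'=\psi_{ji}\fc\psi_j$ are isomorphisms $B\to N:=N_{G_{ij}}(S)$ restricting to the identity on $S$, so $\mu=\theta'\fc\theta^{-1}$ is an automorphism of $N$ fixing $S$ pointwise. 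Here the structure is decisive: $O_p(N)=S$ is normal in $N$, so $N/S$ is a $p'$-group and $C_N(S)=Z(S)$; hence $\mu(g)g^{-1}\in Z(S)$ for all $g$, and $\mu$ determines a class in $H^1(N/S,Z(S))$, which vanishes by coprimality. Therefore $\mu=c_{z_0}|_N$ for some $z_0\in Z(S)=C_{G_{ij}}(S)$, and replacing $\psi_{ij}$ by $c_{z_0}\fc\psi_{ij}$ (which preserves both $\psi_{ij}|_S=\mathrm{incl}$ and the image $\widehat{G}_i$, as $z_0\in Z(S)\le S$) turns $\theta$ into $c_{z_0}\fc\theta=\mu\fc\theta=\theta'$. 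Since this correction for the pair $\{i,j\}$ alters only $\psi_{ij}$, which appears in no other commutativity relation, the adjustments can be made independently pair by pair, and $\Aa$ becomes a diagram of groups. I expect this cohomological matching of the two $\Bb$-realizations inside $G_{ij}$ to be the main obstacle; everything else is a direct application of Theorem \ref{thmconstraint}, Proposition \ref{subconstr}, Lemma \ref{pprimep} and the strong uniqueness from \cite{afs1}.
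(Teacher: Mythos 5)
Your proof is correct, and its skeleton matches the paper's: identify $\Bb$ with $N_{\Ff_i}(S)$ and $N_{\Ff_{ij}}(S)$, realize $\Bb,\Ff_i,\Ff_{ij}$ by the models of Theorem \ref{thmconstraint}, embed a copy of $G_i$ into $G_{ij}$ via Proposition \ref{subconstr}, Lemma \ref{pprimep} and the strong uniqueness of \cite[2.5]{afs1}, and identify $B$ with $N_{G_i}(S)$ and $N_{G_{ij}}(S)$. Where you genuinely diverge is at the crux, the commutativity of the squares. The paper arranges it at the moment the isomorphisms $\vp_i^{(j)}:G_i\rightarrow G_i^{(j)}$ are chosen: appealing to \cite[21.7]{a3t} and the displayed exact sequences, it selects $\vp_i^{(j)}$ so as to \emph{extend} a prescribed isomorphism $B_i\rightarrow B_{ij}$ of the normalizers of $S$ (the asymmetric prescription for $i<j$ versus $i>j$ is exactly what makes the two composites $B\rightarrow G_{ij}$ agree). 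You instead take arbitrary isomorphisms restricting to the identity on $S$ and correct a posteriori: the defect $\mu=\theta'\fc\theta^{-1}$ is an automorphism of $N=N_{G_{ij}}(S)$ fixing $S$ pointwise; since $O_p(N)=S$, $C_N(S)=Z(S)$ and $N/S$ is a $p'$-group, the associated derivation gives a class in $H^1(N/S,Z(S))$ which vanishes by coprimality, so $\mu=c_{z_0}$ with $z_0\in Z(S)$ and can be absorbed into $\psi_{ij}$ without disturbing $\psi_{ij}|_S$, the image $\widehat{G}_i$, or any other relation (each $\psi_{ij}$ occurs in exactly one square, as you note). I checked the cocycle computation and it is sound. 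Your route is a self-contained replacement for the citation to \cite[21.7]{a3t}: the paper's version packages the needed flexibility into an extension theorem stated at the level of the exact sequences, while yours makes explicit the underlying fact that an automorphism of a $p'$-reduced $p$-constrained group fixing a normal Sylow $p$-subgroup pointwise is conjugation by an element of its center. The only cosmetic omission is an explicit remark that $\Bb$ is saturated (immediate from $\Bb=N_{\Ff_i}(S)$ with $\Ff_i$ saturated), which is needed before invoking Theorem \ref{thmconstraint}.
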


\begin{proof}
First notice that $\Bb$ is a saturated, constrained fusion system. Next, recall that, according to \cite[Theorem 4.3]{bcglo1}, (also see \cite[Theorem I.4.9]{ako}), for every saturated constrained fusion system over a finite $p$-group $S$, there exists a $p'$-reduced $p$-constrained finite group, unique up to isomorphism, which realizes the fusion system. Thus we can find such finite groups $B, G_i, G_{ij}, i,j \in I$ with the property that $\Bb = \Ff_S(B)$, $\Ff_i =\Ff_S(G_i)$ and $\Ff_{ij}=\Ff_S(G_{ij})$. Set $U_i = O_p(\Ff_i)$ and let $U_{ij}=O_p(\Ff_{ij})$ for all $i, j \in I$.

The fusion system $\Bb$ is also realized by $N_{G_i}(S)$ and by $N_{G_{ij}}(S)$, see \cite[Proposition 3.8]{lib1}. Then Lemma \ref{pprimep} gives that the groups $N_{G_i}(S)$ and $N_{G_{ij}}(S)$ are $p'$-reduced $p$-constrained. Thus \cite[Theorem I.4.9(b)]{ako} tells us that there exist isomorphisms $B \simeq N_{G_i}(S) \simeq N_{G_{ij}}(S)$ which are the identity on the Sylow $p$-subgroup $S$. Set $B_i = N_{G_i}(S)$ and $B_{ij} = N_{G_{ij}}(S)$. Denote these isomorphisms by $\alpha_i : B \rightarrow B_i$ and by $\alpha_{ij}: B_i \rightarrow B_{ij}$ with $\alpha_{i|S}={\rm Id}_S = \alpha_{ij|S}$. Let $\tau_i: B_i \rightarrow G_i$ be the inclusion map, and set $\psi_i = \tau_i \fc \alpha_i$.

Let $q_{ij}: N_{G_{ij}}(U_i) \twoheadrightarrow \Aut_{\Ff_{ij}}(U_i)$ denote the canonical quotient map. We use the argument in the proof of \cite[1.1]{afs3} to construct a subgroup of $G_{ij}$ that is isomorphic to $G_i$. Observe that $\Aut_{\Ff_i}(U_i) \subseteq \Aut_{\Ff_{ij}}(U_i)$ and introduce the notation $G_i^{(j)}=q^{-1}_{ij}(\Aut_{\Ff_i}(U_i))$. Since $\Ff_i \subseteq \Ff_{ij}$, the group $G_i^{(j)}$ is an overgroup of $S$ in $G_{ij}$ with the property that $\Ff_i = \Ff_S (G_i^{(j)})$. Let $\tau_i^{(j)}: G_i^{(j)} \rightarrow G_{ij}$ be the inclusion map. The group $G_i^{(j)}$ is $p'$-reduced $p$-constrained as follows from Lemma \ref{pprimep}. Using \cite[21.7]{a3t}, we construct isomorphisms $\vp_i^{(j)}: G_i \rightarrow G_i^{(j)}$.

We need the bottom two rows of the following diagram:
$$\begin{CD}
1 @>>> Z(U_{ij}) @ >>> G_{ij} @ > >> \Aut_{\Ff_{ij}}(U_{ij}) @ >>> 1\\
  @.   @.       @AAA                      @.                @.\\
1 @>>> Z(U_i) @ >>> N_{G_{ij}}(U_i) @ > q_{ij} >> \Aut_{\Ff_{ij}}(U_i) @ >>> 1\\
  @.   @|       @AAA                      @AAA                   @.\\
1 @>>> Z(U_i) @ >>> G_i^{(j)} @ >>> \Aut_{\Ff_i}(U_i)  @ >>> 1\\
  @.   @|       @.                    @|                   @.\\
1 @>>> Z(U_i) @ >>> G_i @ >>> \Aut_{\Ff_i}(U_i)  @ >>> 1
\end{CD}$$
where the vertical arrows correspond to inclusions. We also need the restrictions of these bottom two rows to the subgroups $B_{ij}$ and $B_i$:
$$\begin{CD}
1 @>>> Z(U_i) @ >>> B_{ij} @ >>> {\rm Res}^S_{U_i}( \Aut_{\Ff_i}(S)) @ >>> 1\\
  @.   @|       @AA\alpha_{ij}A             @|                   @.\\
1 @>>> Z(U_i) @ >>> B_i @ >>> {\rm Res}^S_{U_i}(\Aut_{\Ff_i}(S)) @ >>> 1
\end{CD}$$

For every pair $i < j$ in $I$ there exist isomorphisms $\vp_i^{(j)}: G_i \rightarrow G_i^{(j)}$ which extend the isomorphisms $\alpha_{ij}:B_i \rightarrow B_{ij}$.

For pairs $i>j$, let $\vp_i^{(j)}: G_i  \rightarrow G_i^{(j)}$ be isomorphisms extending $\alpha_{ji} \fc \alpha_j \fc \alpha^{-1}_i: B_i \rightarrow B_{ij}$, yielding the commutativity of the following diagram:
$$\begin{CD}
B @>\alpha_j>> B_j @ >\tau_j>> G_j @ >\vp_j^{(i)}>> G_j^{(i)} @ >\tau_j^{(i)}>> G_{ij}\\
@| @. @. @.@|\\
B @>\alpha_i>> B_i @ >\tau_i>> G_i @ >\vp_i^{(j)}>> G_i^{(j)} @ >\tau_i^{(j)}>> G_{ij}
\end{CD}$$

Let $\psi_{ij}: G_i \rightarrow G_{ij}$ be $\psi_{ij} = \tau_i^{(j)} \fc \vp_i^{(j)}$. Consequently, we obtain the following diagram of groups $\Aa= \{ (B, G_i, G_{ij}), (\psi_i, \psi_{ij}); i, j \in I \}$.
\end{proof}

We record the following useful fact for further reference:

\begin{num}\label{gijgenerated}
Let $\Ff$ be a saturated constrained fusion system over a finite $p$-group $S$ with the property that $\Ff = \langle \Ff_1, \Ff_2 \rangle$, where $\Ff_1$ and $\Ff_2$ are saturated constrained subsystems over $S$. Let $G, G_1$ and $G_2$ be $p'$-reduced $p$-constrained finite groups that realize $\Ff, \Ff_1$ and $\Ff_2$ respectively, chosen so that $G_1, G_2 \le G$. We claim that $G = \langle G_1, G_2 \rangle$. Let $H: = \langle G_1, G_2 \rangle$ and observe that $\langle \Ff_1, \Ff_2 \rangle \subseteq \Ff_S(H) \subseteq \Ff$. Hence $\Ff_S(H) = \Ff_S(G)$. But $\Ff_S(H)$ is saturated and constrained, also $H$ is $p'$-reduced $p$-constrained (see Lemma \ref{pprimep}). Finally, combine the fact that $H \le G$ with Theorem \ref{thmconstraint} to conclude that $H  = G$. In particular, we remark that $G_{ij} = \langle G_i, G_j \rangle$.
\end{num}

\begin{lem}\label{fusch}
Let $\Ff$ be a fusion system over a finite $p$-group $S$. Assume $O_p(\Ff) = 1$ and that $\Ff$ contains a family of parabolic subsystems. Let $G$ be a faithful completion of the diagram of groups $\Aa$ from \ref{diagfus}. Then the collection of groups $\{ B, G_i\;; i \in I \}$ is a parabolic system in $G$.
\end{lem}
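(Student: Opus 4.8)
The plan is to verify the four axioms $(P_1)$--$(P_4)$ of \ref{parsys} for the family $(B,G_i\;;i\in I)$ inside the faithful completion $G$, three of which are short. For the equality $G=\langle G_i\;;i\in I\rangle$ in $(P_1)$, I would invoke \ref{amalgam}: every completion is a quotient of the colimit $\widetilde G$, and $\widetilde G$ is generated by the images of the $G_i$, so the same holds for $G$. For $(P_3)$, if $B=G_i$ then $\Bb=\Ff_S(B)=\Ff_S(G_i)=\Ff_i$, contradicting (F1); hence $B\subsetneq G_i$ for every $i$.

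Next I would prove $(P_2)$. Fix distinct $i,j$ and set $H=G_i\cap G_j$, an intersection that may be computed inside $G_{ij}=\langle G_i,G_j\rangle$ (see \ref{gijgenerated}). Since $S\le B\le H\le G_{ij}$ and $G_{ij}$ is finite with $S$ Sylow, $S\in\Syl_p(H)$, and $H$ is $p'$-reduced $p$-constrained by Lemma \ref{pprimep}. Restricting conjugation maps gives $\Ff_S(H)\subseteq\Ff_i\cap\Ff_j=\Bb$ by (F3), while $B\le H$ gives $\Bb=\Ff_S(B)\subseteq\Ff_S(H)$; hence $\Ff_S(H)=\Bb=\Ff_S(B)$. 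As $B$ and $H$ are both $p'$-reduced $p$-constrained groups realizing $\Bb$, Theorem \ref{thmconstraint} forces $|H|=|B|$, and since $B\le H$ I conclude $H=B$.

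For $(P_4)$ I would argue as follows. Put $N=\bigcap_{g\in G}B^g$, a normal subgroup of $G$ that is finite because $N\le B$. Its characteristic subgroup $O_p(N)$ is then a normal $p$-subgroup of $G$, so $O_p(N)\trianglelefteq G_i$ for every $i$; thus $O_p(N)\le O_p(G_i)=U_i\le S$, and since conjugation by elements of $G_i$ fixes $O_p(N)$ setwise it follows that $O_p(N)\trianglelefteq\Ff_i=\Ff_S(G_i)$ for all $i$. Using (F2), $O_p(N)\trianglelefteq\langle\Ff_i\;;i\in I\rangle=\Ff$, whence $O_p(N)\le O_p(\Ff)=1$. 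Consequently $N\cap U_i$, being a normal $p$-subgroup of $N$, is trivial; as $N$ and $U_i$ are normal in $G_i$ with trivial intersection they centralize each other, so $N\le C_{G_i}(U_i)\le U_i$ by $p$-constraint, and therefore $N=N\cap U_i=1$.

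The remaining, and \emph{main}, difficulty is the minimality half of $(P_1)$: that $\langle G_j\;;j\in J\rangle\ne G$ whenever $J\subsetneq I$. The plan is to argue by contradiction: assuming $\langle G_j\;;j\in J\rangle=G$ and choosing $i_0\in I\setminus J$, I would try to show $\Ff=\langle\Ff_j\;;j\in J\rangle$, contradicting (F2). Using Alperin--Goldschmidt (\ref{alperin}) in the saturated system $\Ff_{i_0}$, the part of $\Ff_{i_0}$ coming from $\Aut_{\Ff_{i_0}}(S)$ already lies in $\Bb\subseteq\Ff_j$, so the issue is to place the essential automorphisms of the (unique, by essential rank one) $\Ff_{i_0}$-essential subgroup into $\langle\Ff_j\;;j\in J\rangle$. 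The hard part is precisely bridging group generation and fusion generation: such a morphism is conjugation by an element of $G_{i_0}\le\langle G_j\rangle$, but factoring that conjugation through the $G_j$ while keeping all intermediate $p$-subgroups inside $S$ is exactly the content of Lemma \ref{step02}, whose proof relied on the connectivity in Hypothesis \ref{chamber} that is not available here. I therefore expect the real work to be an argument replacing that connectivity input, exploiting instead (F2) together with the constrained structure of the $\Ff_i$ and the intersection data just established in $(P_2)$.
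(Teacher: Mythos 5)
Your verifications of $(P_2)$, $(P_3)$ and $(P_4)$ are correct, and $(P_2)$ is essentially word for word the argument the paper gives: pass to $H=G_i\cap G_j$, note it is a $p'$-reduced $p$-constrained overgroup of $S$ realizing a saturated constrained system squeezed between $\Bb$ and $\Ff_i\cap\Ff_j=\Bb$, and invoke the uniqueness attached to Theorem \ref{thmconstraint}. Your $(P_4)$ takes a genuinely different (and arguably cleaner) route: the paper first observes that $O_p(B_G)$ is a \emph{Sylow} $p$-subgroup of $B_G=\bigcap_{g\in G}B^g$ (this rests on $S=O_p(B)\trianglelefteq B$), then splits into the cases $O_p(B_G)=1$, where $B_G$ would be a nontrivial normal $p'$-subgroup of each $G_i$ contradicting $p'$-reducedness, and $O_p(B_G)\ne 1$, where $O_p(B_G)\trianglelefteq\Ff_i$ for all $i$ forces $O_p(B_G)\le O_p(\Ff)=1$. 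Your version reaches $O_p(N)=1$ by that same fusion-theoretic step and then finishes with $N\cap U_i=1$, $[N,U_i]=1$ and $N\le C_{G_i}(U_i)\le U_i$; both arguments are sound and rely on the same inputs, namely $p'$-reducedness and $p$-constraint of the $G_i$ together with $O_p(\Ff)=1$.

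The one place you stop short --- the minimality half of $(P_1)$, i.e.\ that no proper subfamily $\{G_j\;;j\in J\}$ generates $G$ --- is a genuine gap in your write-up: you only sketch a strategy, and you correctly observe that the obvious route (deduce $\Ff=\langle\Ff_j\;;j\in J\rangle$ from $G=\langle G_j\;;j\in J\rangle$ and contradict (F2)) founders because at this stage one does not yet know $\Ff=\Ff_S(G)$, and the factorization device of Lemma \ref{step02} needs the connectivity of $\Cc^P$, which is only imposed later. Be aware, however, that the paper's own proof supplies nothing here either: it disposes of $(P_1)$ and $(P_3)$ in one sentence as ``easy consequences of the properties of $\Ff$ and the way $G$ was constructed,'' and then proves only $(P_2)$ and $(P_4)$ in detail. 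The generation half of $(P_1)$ is indeed immediate from \ref{amalgam} and \ref{gijgenerated}, as you say, but the minimality half is not; for an arbitrary faithful completion $G$ it is not clear that the hypotheses on $\Ff$ alone exclude a collapse with $G_{i_0}\le\langle G_j\;;j\in J\rangle$ for some $i_0\notin J$. So treat this as an unresolved point shared with the paper rather than a defect peculiar to your argument; everything you actually wrote down is correct.
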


\begin{proof}
If $G$ is a faithful completion of $\Aa$, then we can identify the groups $B, G_i, i \in I$ with subgroups of $G$. We need to check that conditions (P1)-(P4) from \ref{parsys} are fulfilled. Properties (P1) and (P3) are easy consequences of the properties of $\Ff$ and the way $G$ was constructed.

Next we show that given any distinct $i, j \in I$, $G_i \cap G_j = B$. It is clear that $B \subseteq G_i \cap G_j$. It remains to show the opposite inclusion. Observe that $H:=G_i \cap G_j$ is an overgroup of $S$ in $G_i$ and also in $G_j$; and by \ref{pprimep} the group $H$ is $p'$-reduced $p$-constrained. Thus $\Ff_S(H)$ is a saturated constrained fusion system on $S$ and $\Bb \subseteq \Ff_S(H) \subseteq \Ff_i \cap \Ff_j$. Then, using $(F3)$ we obtain that $\Ff_S(H) = \Bb$. But since $B \le H$ and both $H$ and $B$ are $p'$-reduced $p$-constrained finite groups, realizing the same saturated constrained fusion system it follows that $B \simeq H = G_i \cap G_j$, proving (P2).

Set $B_G = \cap_{g \in G}B^g$ and observe that $O_p(B_G)$ is a Sylow $p$-subgroup of $B_G$. If $O_p(B_G) = 1$ then $B_G$ is a $p'$-group. Since $B_G$ is a subgroup of $B$ that is normal in $G$, it follows that $B_G \trianglelefteq G_i$, for every $i\in I$. But this implies that $O_{p'}(G_i) \ne 1$, a contradiction with the fact that $G_i$ is assumed to be $p'$-reduced. So we must have $O_p(B_G)\ne 1$. In this case $O_p(B_G)$ is a $p$-subgroup of $S$ which is normal in every $G_i, \; i\in I$. It follows that $O_p(B_G) \trianglelefteq \Ff_i$, for all $i \in I$, and since $\Ff = \langle \Ff_i \;; i \in I \rangle$, the $p$-subgroup $O_p(B_G)$ is normal in $\Ff$. Hence $O_p(B_G) \le O_p(\Ff) = 1$ and property (P4) holds.
\end{proof}

\begin{defn}\label{cf}
A {\it fusion-chamber system pair}, denoted by $(\Ff, \Cc)$, consists of:
\vspace*{-.2cm}
\begin{list}{\upshape\bfseries}
{\setlength{\leftmargin}{1cm}
\setlength{\labelwidth}{1cm}
\setlength{\labelsep}{0.2cm}
\setlength{\parsep}{0cm}
\setlength{\itemsep}{0cm}}
\item[${\rm(i).}$] a fusion system $\Ff$, with $O_p(\Ff)=1$, which contains a family of parabolic subsystems (as in \ref{parffus});
\item[${\rm(ii).}$]  a chamber system $\Cc = \Cc(G; B, G_i, i \in I)$, with $G$ a faithful completion of the diagram of groups $\Aa$ from \ref{diagfus}.
\end{list}
\end{defn}

\begin{prop}\label{ffsg}
Let $(\Ff, \Cc)$ be a fusion-chamber system pair, and let $S$ denote a Sylow $p$-subgroup of $B$. Suppose that, for any finite $p$-subgroup $P$ of $G$, $\Cc^P$ is connected. Then:
\begin{list}{\upshape\bfseries}
{\setlength{\leftmargin}{1cm}
\setlength{\labelwidth}{1cm}
\setlength{\labelsep}{0.2cm}
\setlength{\parsep}{0.5ex plus 0.2ex minus 0.1ex}
\setlength{\itemsep}{0cm}}
\item[${\rm(i).}$] $S$ is a Sylow $p$-subgroup of $G$;
\item[${\rm(ii).}$] $\Ff$ is the fusion system of $G$ over $S$, denoted $\Ff_S(G)$;
\item[${\rm(iii).}$] Every morphism in $\Ff$ is a composition of restrictions of morphisms between $\Ff$-centric subgroups.
\end{list}
\end{prop}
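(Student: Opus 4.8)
The plan is to place ourselves in the situation of Hypothesis \ref{chamber} and then assemble the Lemmas of Section~4. Since $O_p(\Ff)=1$ and $\Ff$ carries a family of parabolic subsystems, Lemma \ref{fusch} shows that $(B,G_i\;;i\in I)$ is a parabolic system in the faithful completion $G$; together with the standing hypothesis that $\Cc^P$ is connected for every finite $p$-subgroup $P\le G$, this is exactly the setting of \ref{chamber}, with $S\in\Syl_p(B)$. Part (i) is then immediate from Lemma \ref{step1}.

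For (ii) I would prove the two inclusions separately. By Proposition \ref{diagfus} we have $\Ff_i=\Ff_S(G_i)$ for each $i$, and since each $G_i$ is a subgroup of $G$ we get $\Ff_i=\Ff_S(G_i)\subseteq\Ff_S(G)$; as $\Ff=\langle\Ff_i\;;i\in I\rangle$ by (F2), this yields $\Ff\subseteq\Ff_S(G)$. For the reverse inclusion I would invoke Lemma \ref{step02}: every morphism of $\Ff_S(G)$ is a composite of morphisms lying in the panel systems $\Ff_S(G_j)=\Ff_j$, hence a composite of $\Ff$-morphisms, so $\Ff_S(G)\subseteq\langle\Ff_j\;;j\in I\rangle=\Ff$. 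Thus $\Ff=\Ff_S(G)$.

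Statement (iii) is precisely the conclusion of Lemma \ref{step2}, so it suffices to verify that Lemma's hypothesis: every subgroup $P\le S$ that is essential in $\Ff_i=\Ff_S(G_i)$ is $\Ff$-centric. This is the heart of the matter. First, an $\Ff_i$-essential $P$ is $\Ff_i$-centric and, since a strongly $p$-embedded subgroup of $\Out_{\Ff_i}(P)$ forces $O_p(\Out_{\Ff_i}(P))=1$, also $\Ff_i$-radical; hence by \ref{op} it contains $U_i:=O_p(\Ff_i)$. Because $\Ff_i$ is constrained, $U_i$ is $\Ff_i$-centric, whence $C_S(U_i)=Z(U_i)\le U_i$. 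The same mechanism already yields $\Ff_{ij}$-centricity for every $j$: since $U_{ij}:=O_p(\Ff_{ij})$ is normal in $\Ff_{ij}\supseteq\Ff_i$ it lies in $U_i\le P$, and for any $\psi\in\Hom_{\Ff_{ij}}(P,S)$ one has $C_S(\psi(P))\le C_S(\psi(U_{ij}))=Z(\psi(U_{ij}))\le\psi(P)$.

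The remaining and most delicate point is to upgrade $\Ff_{ij}$-centricity to full $\Ff$-centricity, and this is where I expect the real difficulty. Having identified $\Ff$ with $\Ff_S(G)$ in (ii), it is enough to show that every finite $p$-subgroup $X$ of $C_G(P)$ is contained in $P$ (equivalently in $Z(P)$): this reformulation is exactly $\Ff_S(G)$-centricity of $P$. Given such an $X$, the group $PX$ is a finite $p$-group, so by the standing connectivity hypothesis $\Cc^{PX}=(\Cc^P)^X$ is connected and nonempty; hence $X$ fixes some chamber $gB$ of $\Cc^P$, placing both $P$ and $X$ inside the conjugate Borel ${}^gB$. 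Since $\Bb=N_{\Ff}(S)$ has $S$ normal, the realizing group $B$ has $S$ as a normal Sylow $p$-subgroup, so all $p$-elements of ${}^gB$ lie in ${}^gS$ and thus $P,X\le{}^gS$. The obstacle is that $P$ need not be recognizably centric inside this particular ${}^gB$, because the relevant conjugate of $P$ is only an $\Ff$-conjugate, not an $\Ff_i$- or $\Ff_{ij}$-conjugate. My plan to overcome this is to connect $gB$ back to $B$ by a gallery in $\Cc^P$ and to propagate control of $C_G(P)$ one panel at a time through the rank-one residues, using at each step the essential rank-one structure of the $\Ff_j$ together with the $\Ff_{ij}$-centricity already in hand; carrying this propagation through, rather than the formal assembly in (i) and (ii), is the step I expect to require the most care.
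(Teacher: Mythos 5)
Parts (i) and (ii) of your argument coincide with the paper's: (i) is Lemma \ref{step1}, and (ii) is exactly the two inclusions the paper uses, namely $\Ff=\langle\Ff_i\rangle\subseteq\Ff_S(G)$ because each $\Ff_i=\Ff_S(G_i)$ with $G_i\le G$, and the reverse inclusion from Lemma \ref{step02}, with Lemma \ref{fusch} supplying the parabolic-system setting of Hypothesis \ref{chamber}. Your reduction of (iii) to the hypothesis of Lemma \ref{step2} is also the right one, and your observations that an $\Ff_i$-essential subgroup must contain $U_i=O_p(\Ff_i)$ and is therefore $\Ff_{ij}$-centric for every $j$ are correct and agree with the paper.

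The gap is the final step, which you flag but do not carry out: upgrading $\Ff_{ij}$-centricity to $\Ff$-centricity. The panel-by-panel propagation you propose stalls after one move. Writing an $\Ff$-conjugation as $c_{h_n}\circ\cdots\circ c_{h_1}$ with $h_k\in G_{j_k}$ as in Lemma \ref{step02}, the image $P^{h_1}$ is an $\Ff_{j_1}$-conjugate of $P$ and hence still $\Ff_{j_1j}$-centric for all $j$ (centricity being a class invariant), so the second step is fine; but $P^{h_1h_2}$ is only an $\Ff$-conjugate of $P$, it need not be an $\Ff_{j_2j_3}$-conjugate of anything you control, and nothing in the rank-one or rank-two data bounds $C_{G_{j_2j_3}}(P^{h_1h_2})$. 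Your induction therefore has no invariant to maintain --- this is precisely the obstruction you name, and the sentence ``propagate control of $C_G(P)$ one panel at a time'' does not resolve it. The paper closes this step by a different, global device prepared in advance: Proposition \ref{constrcen}, which asserts that for a constrained normalizer subsystem the subgroup $O_p(\cdot)$ is centric in the ambient fusion system. Its proof composes a given $\vp\in\Hom_{\Ff}(U_i,S)$ with a single morphism defined on the full normalizer of the image so as to pull $\vp(U_i)$ back onto $U_i$, rather than tracking the image along a gallery. Applied here it gives at once that $U_i$ is $\Ff$-centric, hence so is every overgroup of $U_i$, in particular each $\Ff_i$-essential subgroup, and Lemma \ref{step2} finishes the proof. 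Without Proposition \ref{constrcen} or an equivalent substitute, your proof of (iii) is incomplete.
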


\begin{proof}
(i). The first part follows from Lemma \ref{step1}.

(ii). It is clear that $\Ff \subseteq \Ff_S(G)$. The opposite inclusion follows from Lemmas \ref{step02} and \ref{fusch}.

(iii). Recall that $\Ff_i$ has essential rank one and the only $\Ff_i$-essential subgroup $E_i$ of $\Ff_i$ must contain $U_i = O_p(\Ff_i)$. But $\Ff_i$ is saturated and constrained, and according to Proposition \ref{constrcen}, the group $U_i$ is $\Ff$-centric, for all $i \in I$. It then follows that $E_i$ is $\Ff$-centric, and the result is obtained by an application of Lemma \ref{step2}.
\end{proof}

The next Proposition is a slight variation of a technical result due to Linckelmann \cite[Proposition 1.6]{link06} and Stancu \cite[Proposition 4.3]{stancu04}, which applies to any fusion system.

\begin{prop}\label{firstsat}
Let $\Ff$ be a fusion system over a finite $p$-group $S$. Assume that:
\begin{list}{\upshape\bfseries}
{\setlength{\leftmargin}{1cm}
\setlength{\labelwidth}{1cm}
\setlength{\labelsep}{0.2cm}
\setlength{\parsep}{0.5ex plus 0.2ex minus 0.1ex}
\setlength{\itemsep}{0cm}}
\item[${\rm(i).}$] $\Aut_S(S)$ is a Sylow $p$-subgroup of $\Aut_{\Ff}(S)$;
\item[${\rm(ii).}$] given an $\Ff$-centric subgroup $P$ of $S$ and $\vp \in \Hom_{\Ff}(P, S)$, there is a morphism $\overline{\vp} \in \Hom_{\Ff}(N_{\vp}, S)$ with the property that $\overline{\vp}_{|P} = \vp$.
\end{list}
If $P$ is $\Ff$-centric and fully $\Ff$-normalized then $\Aut_S(P)$ is a Sylow $p$-subgroup of $\Aut_{\Ff}(P)$.
\end{prop}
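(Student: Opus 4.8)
The plan is to argue by downward induction on the index $[S:P]$ (equivalently, on $|S|-|P|$). The base case $P=S$ is exactly hypothesis (i), since $S$ is $\Ff$-centric and fully $\Ff$-normalized. So I assume $P<S$, whence $P<N_S(P)$ in the $p$-group $S$, and that the assertion holds for every $\Ff$-centric, fully $\Ff$-normalized subgroup of strictly larger order.

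First I would reduce to a single offending automorphism. Choose a Sylow $p$-subgroup $R$ of $\Aut_{\Ff}(P)$ with $\Aut_S(P)\le R$ (possible, as $\Aut_S(P)$ is a $p$-group), and suppose for contradiction that $\Aut_S(P)\ne R$. Since a proper subgroup of a finite $p$-group is properly contained in its normalizer, there is a $p$-element $\varphi\in N_R(\Aut_S(P))\setminus\Aut_S(P)$ normalizing $\Aut_S(P)$, and a direct computation from \ref{nphi} then gives $N_{\varphi}=N_S(P)$. Because $P$ is $\Ff$-centric it is fully $\Ff$-centralized, so hypothesis (ii) applies to $\varphi\colon P\to P$ and yields $\overline{\varphi}\in\Hom_{\Ff}(N_S(P),S)$ with $\overline{\varphi}|_P=\varphi$; as $\overline{\varphi}(N_S(P))$ normalizes $\overline{\varphi}(P)=P$ and has order $|N_S(P)|$, in fact $\overline{\varphi}\in\Aut_{\Ff}(N_S(P))$. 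Replacing $\overline{\varphi}$ by its $p$-part, I may assume $\overline{\varphi}$ is a $p$-element whose restriction to $P$ still generates $\langle\varphi\rangle$ and hence still lies outside $\Aut_S(P)$.

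The heart of the argument is to show that $\overline{\varphi}$ is actually an inner automorphism of $N_S(P)$, and here the inductive hypothesis enters. Any overgroup of an $\Ff$-centric subgroup is $\Ff$-centric, so $N_S(P)$ is $\Ff$-centric, and a fully $\Ff$-normalized $\Ff$-conjugate $T$ of $N_S(P)$ is again $\Ff$-centric with $[S:T]=[S:N_S(P)]<[S:P]$; induction gives $\Aut_S(T)\in\Syl_p(\Aut_{\Ff}(T))$. Picking an $\Ff$-isomorphism $N_S(P)\to T$ and conjugating the $p$-element it produces from $\overline{\varphi}$ into $\Aut_S(T)$ by Sylow's theorem, I obtain an $\Ff$-isomorphism $\sigma\colon N_S(P)\to T$ with $\sigma\overline{\varphi}\sigma^{-1}\in\Aut_S(T)$. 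Now full $\Ff$-normalization of $P$ is used: since $\sigma(P)$ is $\Ff$-conjugate to $P$ and $T=\sigma(N_S(P))\le N_S(\sigma(P))$, comparing orders forces $T=N_S(\sigma(P))$. Because $\sigma\overline{\varphi}\sigma^{-1}$ preserves $\sigma(P)$, the element of $N_S(T)$ realizing it must lie in $N_S(\sigma(P))=T$, so $\sigma\overline{\varphi}\sigma^{-1}\in\Inn(T)$; transporting back along $\sigma$ shows $\overline{\varphi}=c_u|_{N_S(P)}$ for some $u\in N_S(P)$.

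Restricting to $P$ then gives $\varphi=c_u|_P$ with $u\in N_S(P)$, so $\varphi\in\Aut_S(P)$, contradicting the choice of $\varphi$; hence $R=\Aut_S(P)$ and the induction is complete. I expect the main obstacle to be precisely the middle step: $N_S(P)$ need not be fully $\Ff$-normalized, so the inductive hypothesis cannot be applied to it directly. The remedy is to pass to a fully normalized conjugate $T$ and then use the full normalization of $P$ to pin the conjugating element inside $T$ — this is what upgrades the Sylow conclusion ``$\overline{\varphi}$ is $\Ff$-conjugate into $\Aut_S(T)$'' to ``$\overline{\varphi}$ is inner,'' which is the only form strong enough to make the restriction to $P$ land back in $\Aut_S(P)$.
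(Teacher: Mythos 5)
Your proposal is correct and follows essentially the same route as the paper's proof: a maximal-counterexample/induction on the order of $P$, choosing a $p$-element normalizing $\Aut_S(P)$ so that $N_\varphi = N_S(P)$, extending it via hypothesis (ii), passing to the $p$-part, moving to a fully $\Ff$-normalized conjugate of $N_S(P)$ where induction applies, and finally using that $P$ is fully $\Ff$-normalized to force the conjugating element into the image of $N_S(P)$, whence $\varphi$ is inner. The paper's argument is the same in all essentials, including the key observation you single out at the end.
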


\begin{proof}
Let $Q$ be an $\Ff$-centric fully $\Ff$-normalized subgroup of maximal order such that $\Aut_S(Q)$ is not a Sylow $p$-subgroup of $\Aut_{\Ff}(Q)$. Then $Q$ is a proper subgroup of $S$, as it follows from part (i) of the hypothesis. Choose a $p$-subgroup $R$ of $\Aut_{\Ff}(Q)$ such that $\Aut_S(Q)$ is a proper normal subgroup of $R$. Let $\phi \in R \setminus \Aut_S(Q)$. Since $\phi$ normalizes $\Aut_S(Q)$, for every $y \in N_S(Q)$ there is an element $z \in N_S(Q)$ such that $\phi (y u y^{-1}) = z \phi (u) z^{-1}$, for all $u \in Q$. Thus $N_{\phi} = N_S(Q)$. Since $Q$ is $\Ff$-centric, it follows from part (ii) of the hypothesis that $\phi$ extends to $\overline{\phi}: N_{\phi} \rightarrow N_S(Q)$, so that $\overline{\phi} \in \Aut_{\Ff}(N_S(Q))$. Since $\phi$ has $p$-power order, by decomposing $\overline{\phi}$ into its $p$-part and its $p'$-part we may assume that $\overline{\phi}$ has $p$-power order.

Let $\psi: N_S(Q) \rightarrow S$ be a morphism in $\Ff$ such that $\psi (N_S(Q)) = N'$ is fully $\Ff$-normalized. As the order of $N'$ is greater that the order of $Q$ (also observe that $N'$ is $\Ff$-centric), we have that $\Aut_S(N') \in \Syl_p(\Aut_{\Ff}(N'))$. Now $\psi \fc \overline{\phi} \fc \psi^{-1}$ is a $p$-element of $\Aut_{\Ff}(N')$, thus conjugated to an element in $\Aut_S(N')$. Therefore we may choose $\psi$ in such a way that there is $y \in N_S(N')$ satisfying $\psi \fc \overline{\phi} \fc \psi^{-1} (v) = c_y(v)$ for all $v \in N'$. Since $\overline{\phi}_{|Q}=\phi$, it follows that $\psi \fc \overline{\phi} \fc \psi^{-1} (\psi (Q)) = \psi (Q)$ and $y \in N_S(\psi(Q))$. But $Q$ is fully $\Ff$-normalized and since $\psi (N_S(Q)) \subseteq N_S(\psi (Q))$ we have that $\psi(N_S(Q)) = N_S(\psi(Q))$. Hence $\overline{\phi}(u) = \psi^{-1} \fc c_y \fc \psi(u)$ for all $u \in N_S(Q)$. In particular $\phi \in \Aut_S(Q)$ contradicting our choice of $\phi$.
\end{proof}

To this end we can combine the results of this section in the following:
\begin{thm}\label{mythm}
Let $(\Ff, \Cc)$ be a fusion-chamber system pair. Assume the following hold.
\begin{list}{\upshape\bfseries}
{\setlength{\leftmargin}{1cm}
\setlength{\labelwidth}{1cm}
\setlength{\labelsep}{0.2cm}
\setlength{\parsep}{0.5ex plus 0.2ex minus 0.1ex}
\setlength{\itemsep}{0cm}}
\item[${\rm(a).}$] $\Cc^P$ is connected for all $p$-subgroups $P$ of $G$.
\item[${\rm(b).}$] If $P$ is an $\Ff$-centric subgroup of $S$ and if $R$ is a $p$-subgroup of $\Aut_G(P)$, then $(\Cc^P/C_G(P))^R$ is connected.
\end{list}
Then $\Ff = \Ff_S(G)$ is a saturated fusion system over $S$.
\end{thm}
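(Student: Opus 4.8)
The plan is to bypass the chamber-transitivity hypothesis (a) of Theorem \ref{satfs} and instead feed the intrinsic saturation criterion of Proposition \ref{firstsat}, which becomes available here precisely because the parabolic family structure forces $\Bb = N_{\Ff}(S)$ to be saturated. First I would observe that, since $(\Ff, \Cc)$ is a fusion-chamber system pair, the data $(G, \Cc)$ satisfy Hypothesis \ref{chamber}: the diagram $\Aa$ of Proposition \ref{diagfus} consists of finite groups, $G$ is a faithful completion, and assumption (a) supplies the connectivity of $\Cc^P$ for every finite $p$-subgroup $P$ of $G$. Moreover $S$ is a Sylow $p$-subgroup of $B$, since $B$ realizes $\Bb$ over $S$. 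Proposition \ref{ffsg} then yields three facts at once: $S$ is a Sylow $p$-subgroup of $G$, $\Ff = \Ff_S(G)$, and every morphism of $\Ff$ is a composite of restrictions of morphisms between $\Ff$-centric subgroups. Because the $\Ff$-centric subgroups are closed under $\Ff$-conjugacy and under passage to overgroups, this last fact lets me invoke \cite[Theorem 2.2]{bcglo1}: it suffices to verify the two saturation axioms of Definition \ref{dsfs} for $\Ff$-centric subgroups only.

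Next I would dispatch the extension axiom (II). For an $\Ff$-centric subgroup $P$ and any $\vp \in \Hom_{\Ff}(P, S)$, assumption (b) is exactly the connectivity hypothesis of Proposition \ref{step45}, which produces an extension $\overline{\vp} \in \Hom_{\Ff}(N_{\vp}, S)$ with $\overline{\vp}_{|P} = \vp$. This settles axiom (II) on $\Ff$-centric subgroups and, simultaneously, verifies hypothesis (ii) of Proposition \ref{firstsat}.

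For axiom (I) I would then apply Proposition \ref{firstsat}. Its hypothesis (ii) is the extension property just established. For its hypothesis (i), note that $\Bb = N_{\Ff}(S) = \Ff_S(B)$ is saturated, being the fusion system of the finite group $B$, and that $\Aut_{\Ff}(S) = \Aut_{\Bb}(S)$; since $S$ is trivially fully $\Bb$-normalized, the first saturation axiom for $\Bb$ gives $\Aut_S(S) \in \Syl_p(\Aut_{\Ff}(S))$. Proposition \ref{firstsat} then shows $\Aut_S(P) \in \Syl_p(\Aut_{\Ff}(P))$ for every $\Ff$-centric, fully $\Ff$-normalized $P$. The remaining clause of axiom (I) is automatic: if $P$ is $\Ff$-centric then $C_S(\vp(P)) = Z(\vp(P)) \cong Z(P)$ for every $\vp \in \Hom_{\Ff}(P, S)$, so $|C_S(\vp(P))| = |C_S(P)|$ and $P$ is fully $\Ff$-centralized. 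Thus axiom (I) holds on $\Ff$-centric subgroups, and together with axiom (II) and \cite[Theorem 2.2]{bcglo1} the saturation of $\Ff = \Ff_S(G)$ follows.

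The main conceptual obstacle is not any single computation but the realization that the chamber-transitivity hypothesis needed in Proposition \ref{step3} (and hence in Theorem \ref{satfs}) can be dropped here and replaced by the intrinsic criterion of Proposition \ref{firstsat}; the reason this substitution is legitimate is the extra input of the parabolic family, namely that $\Bb = N_{\Ff}(S)$ is a saturated constrained system realized by a finite group. The delicate bookkeeping lies in confirming that hypothesis (i) of Proposition \ref{firstsat} is genuinely available through the identification $\Aut_{\Ff}(S) = \Aut_{\Bb}(S)$, and that the ``fully normalized implies fully centralized'' half of axiom (I) is free on $\Ff$-centric subgroups, so that the reduction to centric subgroups via \cite[Theorem 2.2]{bcglo1} really does complete the argument.
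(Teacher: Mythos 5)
Your proposal is correct and follows essentially the same route as the paper's own proof: Proposition \ref{ffsg} to get $S \in \Syl_p(G)$, $\Ff = \Ff_S(G)$ and the generation by morphisms between $\Ff$-centric subgroups, then the reduction to centric subgroups via \cite[Theorem 2.2]{bcglo1}, with axiom (II) from Proposition \ref{step45} and axiom (I) from Proposition \ref{firstsat} using $\Aut_{\Ff}(S) = \Aut_{\Bb}(S)$ and $S \in \Syl_p(B)$. The observation that this substitutes the intrinsic criterion of Proposition \ref{firstsat} for the chamber-transitivity hypothesis of Theorem \ref{satfs} is exactly the point of the paper's argument as well.
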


\begin{proof}
Assume that $\Ff$ is a fusion system over a finite $p$-group $S$, which contains a family of parabolic subsystems which fulfill the properties (F1)-(F4) from \ref{parffus}. Let $B, G_i, G_{ij}$, with $i, j \in I$, be $p'$-reduced $p$-constrained finite groups that realize $\Bb, \Ff_i, \Ff_{ij}, i, j \in I$. Under our assumption that $G$ is a faithful completion, the groups $B, G_i, G_{ij}, i, j \in I$ can be regarded as subgroups of $G$, and according to Lemma \ref{fusch}, they form a parabolic system in $G$, in the sense of \ref{parsys}. Let $\Cc = \Cc(G; B, G_i, i \in I)$ be the associated chamber system described in \ref{associatedchamber}. Next, assuming that $\Cc^P$ is connected, for every $P \le G$ and using Proposition \ref{ffsg}, it is obtained that $\Ff$ is realized by $G$, in other words $\Ff = \Ff_S(G)$.

It remains to show that the fusion system $\Ff$ is saturated. First, it is shown that every morphism in $\Ff$ can be written as a composition of restrictions of morphisms between $\Ff$-centric subgroups; this is the result of Proposition \ref{ffsg}(iii). It follows from \cite[Theorem 2.2]{bcglo1}, that it suffices to verify the two saturation axioms in \ref{dsfs} for the collection of $\Ff$-centric subgroups only. The second saturation condition (II) is obtained from Proposition \ref{step45}. Further $\Aut_{\Ff}(S) = \Aut_{\Bb}(S)$ and $S \in \Syl_p(B)$, hence $S/Z(S) = \Aut_S(S) \in \Syl_p(\Aut_{\Bb}(S))$. Therefore the conditions from the hypothesis of Proposition \ref{firstsat} are in place and the first saturation axiom (I) follows. This concludes the proof of the fact that $\Ff$ is a saturated fusion system over $S$.
\end{proof}

\section{A subsystem with a parabolic family}

We show that a fusion system $\Ff$ over $S$ with a family of parabolic subsystems contains a certain saturated subsystem $\wFf$ over $S$ which also has an associated parabolic family. Before proceeding with our construction we shall review some standard facts and properties of certain normal subsystems.
\subsection*{Subsystems of index prime to p}

The material included in this overview appeared elsewhere in the literature, we refer the reader to \cite[Chapter 12]{puigbook} and to \cite[Section 5]{bcglo2} for earlier sources. We will follow the more recent approach from \cite[Section I.7]{ako}.

\begin{prop}\label{puigprop}
Let $\Ff$ and $\Gg$ be fusion systems over $S$ with $\Gg \subseteq \Ff$ and $\Ff$ saturated. Assume that $O^{p'}(\Aut_{\Ff}(Q)) \le \Aut_{\Gg}(Q)$ for every subgroup $Q$ of $S$. Then:
\begin{list}{\upshape\bfseries}
{\setlength{\leftmargin}{1cm}
\setlength{\labelwidth}{1cm}
\setlength{\labelsep}{0.2cm}
\setlength{\parsep}{0.5ex plus 0.2ex minus 0.1ex}
\setlength{\itemsep}{0cm}}
\item[${\rm(i).}$] $\Hom_{\Ff}(Q, S) = \Aut_{\Ff}(S) \fc \Hom_{\Gg}(Q, S)$;
\item[$(ii).$] $Q$ is fully $\Ff$-normalized ($\Ff$-centralized) iff it is fully $\Gg$-normalized ($\Gg$-centralized);
\item[$(iii).$] $Q$ is $\Ff$-centric if and only if it is $\Gg$-centric;
\item[$(iv).$] $Q$ is $\Ff$-essential if and only if it is $\Gg$-essential.
\end{list}
\end{prop}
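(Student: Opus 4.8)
These four assertions belong to the theory of subsystems of index prime to $p$ (\cite[Section I.7]{ako}); the crux is (i), and (ii)--(iv) follow from it formally. Throughout I use that $O^{p'}(\Aut_\Ff(Q))\le\Aut_\Gg(Q)\le\Aut_\Ff(Q)$ forces $[\Aut_\Ff(Q):\Aut_\Gg(Q)]$ to be prime to $p$ and, since $O^{p'}$ is generated by the $p$-elements, forces $\Aut_\Gg(Q)$ to contain every Sylow $p$-subgroup of $\Aut_\Ff(Q)$. In particular $\Inn(Q)\le\Aut_\Gg(Q)$, and when $Q$ is fully $\Ff$-normalized $\Aut_S(Q)\in\Syl_p(\Aut_\Ff(Q))$ already lies in $\Aut_\Gg(Q)$.

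For (i), the inclusion $\supseteq$ is immediate since $\Hom_\Gg(Q,S)\subseteq\Hom_\Ff(Q,S)$ and $\Aut_\Ff(S)\subseteq\Mor(\Ff)$. The inclusion $\subseteq$ is modelled on an elementary group computation: if $K$ is finite with Sylow $p$-subgroup $S$ and $K_0\le K$ contains every Sylow $p$-subgroup of $K$, then $K=N_K(S)K_0$ (for $g\in K$ the Sylow subgroup $^{g}S$ is $K_0$-conjugate to $S$), so writing $g=nh$ with $n\in N_K(S)$ and $h\in K_0$ one gets $^{h}Q={}^{n^{-1}}({}^{g}Q)\le S$ whenever $^{g}Q\le S$, whence $c_g=c_n\fc c_h$ and $\Hom_K(Q,S)=\Aut_K(S)\fc\Hom_{K_0}(Q,S)$. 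To transport this to $\Ff$, I would invoke Alperin's fusion theorem (\ref{alperin}): every $\varphi\in\Hom_\Ff(Q,S)$ is a composite of restrictions of automorphisms $\beta\in\Aut_\Ff(R)$ with $R$ either $S$ or a fully $\Ff$-normalized $\Ff$-essential subgroup. For $R=S$ there is nothing to do; for essential $R$ the hypothesis gives $\Aut_S(R)\in\Syl_p(\Aut_\Ff(R))\cap\Aut_\Gg(R)$ and $O^{p'}(\Aut_\Ff(R))\le\Aut_\Gg(R)$, so the only obstruction to rewriting $\beta$ as a $\Gg$-automorphism is its class in the $p'$-group $\Aut_\Ff(R)/\Aut_\Gg(R)$. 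The core of the proof, and the main obstacle, is to absorb all of these local $p'$-discrepancies simultaneously into a single $\alpha\in\Aut_\Ff(S)$: an individual essential $R$ need not carry enough automorphisms that extend to $S$, so the cancellation is genuinely global and cannot be done one essential at a time. This is exactly the coherence supplied by the structure theory of index-$p'$ subsystems in \cite[Section I.7]{ako} --- the local $p'$-ambiguities all factor through the single finite $p'$-group $\Out_\Ff(S)/O^{p'}(\Out_\Ff(S))$ --- and it is here that the saturation of $\Ff$ is used.

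Granting (i), statement (ii) is formal: the $\Ff$-conjugacy class of $Q$ is $\{\alpha(\psi(Q)):\alpha\in\Aut_\Ff(S),\ \psi\in\Hom_\Gg(Q,S)\}$, i.e. the $\Aut_\Ff(S)$-saturation of the $\Gg$-class. Since each $\alpha\in\Aut_\Ff(S)$ is an automorphism of $S$, $N_S(\alpha P)=\alpha(N_S(P))$ and $C_S(\alpha P)=\alpha(C_S(P))$, so $|N_S(\varphi(Q))|$ and $|C_S(\varphi(Q))|$ run through exactly the same sets of values whether $\varphi$ ranges over $\Hom_\Ff(Q,S)$ or over $\Hom_\Gg(Q,S)$; as $Q$ is its own conjugate under the identity, it attains the maximum in $\Ff$ iff it does in $\Gg$, giving (ii) in both the normalized and centralized forms. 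Statement (iii) is the same observation applied to the centricity condition: $C_S(\alpha P)=\alpha(C_S(P))$ and $Z(\alpha P)=\alpha(Z(P))$ show that $C_S(-)=Z(-)$ is invariant under $\Aut_\Ff(S)$, so it holds on the full $\Ff$-class of $Q$ iff it holds on the $\Gg$-class; hence $Q$ is $\Ff$-centric iff $\Gg$-centric.

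Finally (iv). By (iii) the centricity clause of essentiality matches, so it remains to compare the existence of strongly $p$-embedded subgroups in $\Out_\Ff(Q)$ and in $\Out_\Gg(Q)$. Passing to $\Out$ (note $\Inn(Q)\le O^{p'}(\Aut_\Ff(Q))$) the hypothesis yields $O^{p'}(\Out_\Ff(Q))\le\Out_\Gg(Q)\le\Out_\Ff(Q)$. I would then apply the group-theoretic fact that, for finite groups $K_0\le K$ with $O^{p'}(K)\le K_0$, the group $K_0$ contains every nontrivial $p$-subgroup of $K$ (each lies in a Sylow $p$-subgroup, and those lie in $O^{p'}(K)\le K_0$), so the posets $\Ss_p(K)$ and $\Ss_p(K_0)$ of nontrivial $p$-subgroups coincide; since a finite group admits a strongly $p$-embedded subgroup precisely when $p\mid|K|$ and this poset is disconnected, $K$ has one iff $K_0$ does. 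Taking $K=\Out_\Ff(Q)$ and $K_0=\Out_\Gg(Q)$ shows that $Q$ is $\Ff$-essential iff it is $\Gg$-essential, which completes (iv).
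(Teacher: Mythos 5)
Parts (ii) and (iii) of your argument are correct and essentially the paper's own (the observation that precomposing with $\alpha\in\Aut_{\Ff}(S)$ preserves $|N_S(-)|$, $|C_S(-)|$ and the centricity condition, so the relevant quantities range over the same sets for $\Ff$- and $\Gg$-conjugates). Part (iv) is also sound, though you take a slightly different route: the paper notes $O^{p'}(\Out_{\Ff}(Q))=O^{p'}(\Out_{\Gg}(Q))$ and uses the fact that a finite group has a strongly $p$-embedded subgroup iff its $O^{p'}$ does, whereas you use the disconnectedness of the poset of nontrivial $p$-subgroups; both work.

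The genuine gap is in (i), and you have flagged it yourself: you reduce, via Alperin's theorem, to rewriting each essential-local automorphism modulo $\Aut_{\Gg}$, correctly observe that the residual $p'$-discrepancies must be absorbed into a \emph{single} $\alpha\in\Aut_{\Ff}(S)$, declare this ``genuinely global'' obstruction to be the core of the proof, and then defer its resolution entirely to \cite[Section I.7]{ako}. That is a citation of the statement's difficulty, not a proof of it. The paper closes exactly this gap with a self-contained induction on $|S:Q|$ that avoids Alperin's theorem altogether: one reduces (by the standard argument from the proof of \cite[Theorem A.10]{blo2}) to $\vp\in\Aut_{\Ff}(Q)$ with $Q$ fully $\Ff$-normalized, applies the Frattini factorization $\Aut_{\Ff}(Q)=N_{\Aut_{\Ff}(Q)}(\Aut_S(Q))\cdot O^{p'}(\Aut_{\Ff}(Q))$ to write $\vp=\phi\fc\eta$ with $\eta\in O^{p'}(\Aut_{\Ff}(Q))\le\Aut_{\Gg}(Q)$, notes that $N_{\phi}=N_S(Q)$ so that saturation axiom (II) extends $\phi$ to $\widehat{\phi}\colon N_S(Q)\rightarrow S$ with $\widehat{\phi}(Q)=Q$, and then invokes the induction hypothesis on the strictly larger subgroup $N_S(Q)$ to factor $\widehat{\phi}=\alpha\fc\widehat{\chi}$ with $\alpha\in\Aut_{\Ff}(S)$ and $\widehat{\chi}\in\Hom_{\Gg}(N_S(Q),S)$. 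Restricting back to $Q$ gives $\vp=\alpha\fc(\widehat{\chi}_{|Q}\fc\eta)$ of the required form. The ``global coherence'' you could not supply is thus produced by pushing the problem up one step to $N_S(Q)$ at each stage until it terminates at $S$; without this (or an equivalent mechanism), your proof of (i) --- and hence of the statements (ii)--(iv) that you derive from it --- is incomplete.
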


\begin{proof}
$(i)$. Let $Q$ be a subgroup of $S$. Clearly we have $\Aut_{\Ff}(S) \fc \Hom_{\Gg}(Q, S) \subseteq \Hom_{\Ff}(Q,S)$. It remains to prove that for any morphism $\vp \in \Hom_{\Ff}(Q, S)$ there exist $\alpha \in \Aut_{\Ff}(S)$ and $\zeta \in \Hom_{\Gg}(Q, S)$ with $\vp = \alpha \fc \zeta$. We argue by induction on the index $|S:Q|$. If $S=Q$ then $\vp \in \Aut_{\Ff}(S)$ and we are done. So we may suppose that $|S :Q| >1$, which means that $Q < S$. By a standard argument (see the proof of Theorem A.10 in \cite{blo2}) we can show that it suffices to find the sought decomposition for an automorphism $\vp \in \Aut_{\Ff}(Q)$ of a fully $\Ff$-normalized subgroup $Q$ of $S$. By a general Frattini argument, using the fact that $\Ff$ is saturated and hence $\Aut_S(Q) \in \Syl_p(\Aut_{\Ff}(Q))$ we obtain:
$$\Aut_{\Ff}(Q) = N_{\Aut_{\Ff}(Q)} (\Aut_S(Q)) \cdot O^{p'}(\Aut_{\Ff}(Q)).$$
Thus $\vp = \phi \fc \eta$ with $\phi \in N_{\Aut_{\Ff}(Q)}(\Aut_S(Q))$ and $\eta \in O^{p'}(\Aut_{\Ff}(Q))$. Next observe that
$$N_{\Aut_{\Ff}(Q)} (\Aut_S(Q)) = \{ \rho \in \Aut_{\Ff}(Q)\; |\; N_{\rho}=N_S(Q) \}.$$

Since $Q$ is fully $\Ff$-normalized, it is fully $\Ff$-centralized, and the saturation axiom (II) implies that $\phi$ extends to a map $\widehat{\phi}: N_S(Q) \rightarrow S$ which has the property that $\widehat{\phi}(Q)=\phi(Q)=Q$. Since $Q < N_S(Q) \le S$, the induction hypothesis gives that $\widehat{\phi} = \alpha_{|\widehat{\chi}(N_S(Q))} \fc \widehat{\chi}$ where $\alpha \in \Aut_{\Ff}(S)$ and $\widehat{\chi} \in \Hom_{\Gg}(N_S(Q), S)$. Consequently $\phi = \alpha_{|\widehat{\chi}(Q)} \fc \widehat{\chi}_{|Q}$ has the desired form. Therefore $\vp = \phi \fc \eta = \alpha_{|\widehat{\chi}(Q)} \fc \widehat{\chi}_{|Q} \fc \eta$ with $\widehat{\chi}_{|Q} \fc \eta \in \Hom_{\Gg}(Q,S)$ and $\vp$ has the required form also.

$(ii)$. If $Q$ is fully $\Ff$-normalized then $Q$ is also fully $\Gg$-normalized, given $\Gg \subseteq \Ff$. Conversely, if $Q$ is fully $\Gg$-normalized and $\psi: N_S(Q) \rightarrow S$ is an $\Ff$-morphism, we obtain that $\psi = \alpha \fc \zeta$ with $\alpha \in \Aut_{\Ff}(S)$ and $\zeta \in \Hom_{\Gg}(N_S(Q), S)$. Also $|N_S(\zeta(Q))| \le |N_S(Q)|$. On the other side, we always have $\zeta(N_S(Q)) \le N_S(\zeta(Q))$ and since $\zeta$ is injective, $N_S(\zeta(Q)) = \zeta(N_S(Q))$. Therefore
$$\psi(N_S(Q)) = (\alpha \fc \zeta)(N_S(Q)) = \alpha(N_S(\zeta(Q))) = N_S(\alpha \fc \zeta (Q)) = N_S(\psi(Q))$$
which shows that $Q$ is fully $\Ff$-normalized. The other statement from $(ii)$ can be proved in a similar way.

$(iii)$. Clearly, each $\Ff$-centric subgroup $P$ is also $\Gg$-centric. Conversely, if $P$ is $\Gg$-centric then, by part $(i)$, each $\Ff$-conjugate of $P$ has the form $\alpha (Q)$ with $\alpha \in \Aut_{\Ff}(S)$ and $Q$ a $\Gg$-conjugate of $P$. Then $C_S(\alpha (Q)) \le \alpha (Q)$ since $C_S(Q) \le Q$.

$(iv)$. Observe $O^{p'}(\Aut_{\Ff}(Q)) = O^{p'} ( \Aut_{\Gg}(Q))$ and thus $O^{p'} (\Out_{\Ff}(Q)) = O^{p'}(\Out_{\Gg}(Q))$. By the Frattini argument, a finite group $G$ has a strongly $p$-embedded subgroup if and only if $O^{p'}(G)$ has one. Hence $\Out_{\Ff}(Q)$ has a strongly $p$-embedded subgroup if and only if $\Out_{\Gg}(Q)$ has one. Now the assertion follows from part $(iii)$.
\end{proof}

We use the result of \cite[Theorem I.7.7]{ako} to formulate the following:

\begin{defn}\label{oprime}
Given a saturated fusion system $\Ff$ on a finite $p$-group $S$, we let $O^{p'}(\Ff)$ denote the smallest saturated fusion subsystem of $\Ff$ which has the property that $\Aut_{O^{p'}(\Ff)}(P) \ge O^{p'}(\Aut_{\Ff}(P))$ for every subgroup $P$ of $S$.
\end{defn}

It follows from Proposition \ref{puigprop} that the Frattini argument holds for saturated fusion systems.

\begin{cor}\label{frattini}
If $\Ff$ is a saturated fusion system over a finite $p$-group $S$ then there is a decomposition $\Ff = \langle O^{p'}(\Ff), N_{\Ff}(S) \rangle$.
\end{cor}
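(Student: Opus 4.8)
The plan is to set $\Gg = \langle O^{p'}(\Ff), N_{\Ff}(S) \rangle$ and prove $\Gg = \Ff$; the inclusion $\Gg \subseteq \Ff$ is immediate since both generators are subsystems of $\Ff$, so the entire content lies in the reverse inclusion $\Ff \subseteq \Gg$. My main tool will be Proposition \ref{puigprop}(i) applied to the pair $\Gg \subseteq \Ff$, so the first thing to verify is its hypothesis: that $O^{p'}(\Aut_{\Ff}(Q)) \le \Aut_{\Gg}(Q)$ for every $Q \le S$. This holds because $O^{p'}(\Ff) \subseteq \Gg$ gives $\Aut_{\Gg}(Q) \ge \Aut_{O^{p'}(\Ff)}(Q)$, while the defining property of $O^{p'}(\Ff)$ in Definition \ref{oprime} gives $\Aut_{O^{p'}(\Ff)}(Q) \ge O^{p'}(\Aut_{\Ff}(Q))$.

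Granting the hypothesis, Proposition \ref{puigprop}(i) yields $\Hom_{\Ff}(Q, S) = \Aut_{\Ff}(S) \fc \Hom_{\Gg}(Q, S)$ for every $Q \le S$. The key observation is then that $\Aut_{\Ff}(S)$ already sits inside $\Gg$: since $N_S(S) = S$, unwinding Definition \ref{norcenfs} shows that the morphisms of $N_{\Ff}(S)$ are precisely the restrictions of elements of $\Aut_{\Ff}(S)$, so that $\Aut_{N_{\Ff}(S)}(S) = \Aut_{\Ff}(S)$ and hence $\Aut_{\Ff}(S) \le \Aut_{\Gg}(S)$. Because $\Gg$ is closed under composition, $\Aut_{\Ff}(S) \fc \Hom_{\Gg}(Q, S) \subseteq \Hom_{\Gg}(Q, S)$, and therefore $\Hom_{\Ff}(Q, S) \subseteq \Hom_{\Gg}(Q, S)$ for all $Q \le S$.

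Finally I would upgrade this from hom-sets into $S$ to arbitrary hom-sets. Given $\vp \in \Hom_{\Ff}(P, Q)$, post-composition with the inclusion $Q \hookrightarrow S$ produces an element of $\Hom_{\Ff}(P, S) \subseteq \Hom_{\Gg}(P, S)$; factoring it inside $\Gg$ as an isomorphism onto $\vp(P)$ followed by the inclusion $\vp(P) \hookrightarrow Q$, the latter lying in $\Hom_S(\vp(P), Q) \subseteq \Gg$, recovers $\vp$ as a composite of $\Gg$-morphisms, so $\vp \in \Hom_{\Gg}(P, Q)$. This establishes $\Ff \subseteq \Gg$ and completes the argument. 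I do not anticipate a genuine obstacle: the corollary is essentially a direct reading of Proposition \ref{puigprop}, and the only points requiring care are the verification of that proposition's hypothesis and the identification $\Aut_{N_{\Ff}(S)}(S) = \Aut_{\Ff}(S)$, both of which are routine unwindings of the definitions.
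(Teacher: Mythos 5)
Your proposal is correct and follows essentially the same route as the paper, which derives the corollary directly from Proposition \ref{puigprop}(i) combined with the defining property of $O^{p'}(\Ff)$ in Definition \ref{oprime} and the identification $\Aut_{N_{\Ff}(S)}(S)=\Aut_{\Ff}(S)$. The only cosmetic difference is that you apply the proposition with $\Gg=\langle O^{p'}(\Ff),N_{\Ff}(S)\rangle$ rather than with $\Gg=O^{p'}(\Ff)$ itself; both verifications are immediate and the argument is sound.
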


If $\Ff=\Ff_S(G)$ then $O^{p'}(\Ff)$ does not necessarily correspond to $O^{p'}(G)$. However, there are particular cases in which the correspondence is attained.

\begin{prop}\label{oprimeconstrained}
Let $\Ff = \Ff_S(G)$ be a saturated constrained fusion system over a finite $p$-group $S$ with $G$ a $p'$-reduced $p$-constrained finite group. Then $O^{p'}(\Ff)=\Ff_S(O^{p'}(G))$.
\end{prop}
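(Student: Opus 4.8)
The plan is to set $H = O^{p'}(G)$ and $\Ee = \Ff_S(H)$ and to prove the two inclusions $O^{p'}(\Ff)\subseteq\Ee$ and $\Ee\subseteq O^{p'}(\Ff)$ separately. Since $|G:H|$ is prime to $p$, the group $S$ is a Sylow $p$-subgroup of $H$ as well, so by \ref{fgsatfs} the system $\Ee$ is saturated, and clearly $\Ee\subseteq\Ff$. For the first inclusion, by the minimality in Definition \ref{oprime} it suffices to verify that $O^{p'}(\Aut_\Ff(P))\le\Aut_\Ee(P)$ for every $P\le S$. Here $\Aut_\Ff(P)=N_G(P)/C_G(P)$ and $\Aut_\Ee(P)$ is its subgroup $N_H(P)C_G(P)/C_G(P)$, so, as $O^{p'}(\Aut_\Ff(P))$ is generated by its $p$-elements, I only need to see that every $p$-element of $\Aut_G(P)$ is realized by a $p$-element of $N_G(P)$. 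Given $\overline g\in\Aut_G(P)$ of $p$-power order with $g\in N_G(P)$, a Sylow $p$-subgroup of $\langle g\rangle C_G(P)$ surjects onto the cyclic $p$-group $\langle\overline g\rangle$, so $\overline g=\overline y$ for some $p$-element $y\in N_G(P)$; since $O^{p'}(G)$ contains every $p$-element of $G$, we get $y\in N_H(P)$ and hence $\overline g\in\Aut_H(P)=\Aut_\Ee(P)$. This gives $O^{p'}(\Ff)\subseteq\Ee$.

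For the reverse inclusion I would first check that $O^{p'}(\Ff)$ is itself saturated and constrained. Put $Q=O_p(\Ff)=O_p(G)$; it is $\Ff$-centric, hence $O^{p'}(\Ff)$-centric by Proposition \ref{puigprop}(iii). Moreover $Q\trianglelefteq\Ff$, and by Alperin's theorem (\ref{alperin}) together with Proposition \ref{puigprop}(iv) the system $O^{p'}(\Ff)$ is generated by automorphisms of $\Ff$-essential subgroups and of $S$; each such subgroup contains $Q$ (Notation \ref{op}) and each such automorphism fixes $Q$ because $Q\trianglelefteq\Ff$, whence $Q\trianglelefteq O^{p'}(\Ff)$ and $O^{p'}(\Ff)$ is constrained. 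Aschbacher's Proposition \ref{subconstr}, applied to the subsystem $O^{p'}(\Ff)\subseteq\Ff=\Ff_S(G)$, then produces an overgroup $H_0$ of $S$ in $G$ with $O^{p'}(\Ff)=\Ff_S(H_0)$, and $H_0$ is $p'$-reduced $p$-constrained by Lemma \ref{pprimep}.

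To finish I would compare automorphism groups at $Q$. Because $Q\trianglelefteq G$ and $C_G(Q)=Z(Q)\le S\le H_0$, one has $\Aut_\Ff(Q)=G/Z(Q)$ and $\Aut_{O^{p'}(\Ff)}(Q)=\Aut_{H_0}(Q)=H_0/Z(Q)$. The defining inequality $\Aut_{O^{p'}(\Ff)}(Q)\ge O^{p'}(\Aut_\Ff(Q))$ then reads $H_0/Z(Q)\ge O^{p'}(G/Z(Q))=O^{p'}(G)Z(Q)/Z(Q)=H/Z(Q)$ inside $G/Z(Q)$, so $H\le H_0$ and therefore $\Ee=\Ff_S(H)\subseteq\Ff_S(H_0)=O^{p'}(\Ff)$. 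Combined with the first inclusion this gives $O^{p'}(\Ff)=\Ff_S(O^{p'}(G))$, as desired.

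I expect the real obstacle to be the opening step of the reverse inclusion, namely establishing that $O^{p'}(\Ff)$ is again constrained (equivalently, that $O_p(\Ff)$ remains normal and centric in $O^{p'}(\Ff)$): this is precisely what lets me invoke Aschbacher's realization result and pass to the finite group $H_0$. Once a $p'$-reduced $p$-constrained overgroup realizing $O^{p'}(\Ff)$ is available, the decisive comparison at $Q$ reduces to the elementary identity $O^{p'}(G/Z(Q))=O^{p'}(G)Z(Q)/Z(Q)$ and a routine index count.
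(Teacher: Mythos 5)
Your proof is correct, and it splits the two inclusions differently from the paper. The inclusion $O^{p'}(\Ff)\subseteq\Ff_S(O^{p'}(G))$ is handled exactly as in the paper: every $p$-element of $\Aut_{\Ff}(P)$ is induced by a $p$-element of $N_G(P)$, which necessarily lies in $O^{p'}(G)$, so $O^{p'}(\Aut_{\Ff}(P))\le\Aut_{\Ff_S(O^{p'}(G))}(P)$ for all $P\le S$ and the minimality in Definition \ref{oprime} does the rest. For the reverse inclusion the paper takes a shortcut: it invokes Aschbacher's theorem \cite[Theorem 1]{afs1}, which realizes $O^{p'}(\Ff)$ directly as $\Ff_S(O_G)$ for a normal subgroup $O_G$ of $G$ containing $S$; such a subgroup automatically contains every $p$-element of $G$, hence $O^{p'}(G)\le O_G$, and the inclusion follows in one line. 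You instead stay within the toolkit already quoted in the paper: you verify that $O^{p'}(\Ff)$ is constrained --- the one genuinely new step, and your Alperin-plus-Proposition~\ref{puigprop}(iv) argument that $O_p(\Ff)$ stays normal in $O^{p'}(\Ff)$ is valid --- then realize $O^{p'}(\Ff)$ by an overgroup $H_0$ of $S$ via Proposition \ref{subconstr}, and finally extract $O^{p'}(G)\le H_0$ from the defining inequality at $Q=O_p(G)$ using the identity $O^{p'}(G/Z(Q))=O^{p'}(G)Z(Q)/Z(Q)$, which is standard. Your route is longer but self-contained relative to the results stated in the paper, whereas the paper's is shorter but leans on a stronger external realization theorem; both are sound.
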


\begin{proof}
Let $\Ff$ and $G$ be as in the hypothesis. Since $O^{p'}(G)$ is a normal subgroup of $G$, it follows that $\Ff_S(O^{p'}(G))$ is a normal saturated fusion subsystem of $\Ff$. On the other side, $\Ff$ is constrained and Aschbacher's Theorem \cite[Theorem 1]{afs1} asserts the existence of a unique normal subgroup $O_G$ of $G$ with $O^{p'}(\Ff)=\Ff_S(O_G)$.

Set $\Ff'=\Ff_S(O^{p'}(G))$ and $\wFf=O^{p'}(\Ff)$. The inclusion $\Ff' \subseteq \wFf$ follows from $O^{p'}(G) \le O_G$. To prove that $\wFf \subseteq \Ff'$, let $\vp \in \Aut_{\Ff}(Q)$, $Q \le S$ be a $p$-element. Then $\vp = c_g$ for some $g \in N_G(Q)$. Then we may choose $g$ to be a $p$-element as well and so $g \in O^{p'}(G)$. Hence $\vp = c_g \in \Aut_{\Ff'}(Q)$. This proves $O^{p'}(\Aut_{\Ff}(Q)) \le \Aut_{\Ff'}(Q)$ and thus $\wFf \subseteq \Ff'$.
\end{proof}

\subsection*{A reduction result}

For the rest of this section we shall assume that $(\Ff, \Cc)$ is a fusion-chamber system pair as defined in \ref{cf}. According to Lemma \ref{fusch}, the family $(B, G_i\;; i \in I)$ is a parabolic system in $G$, a faithful completion of the diagram of groups $\Aa$ from Proposition \ref{diagfus}. Thus $B \le G_i$ for all $i \in I$, and by \ref{gijgenerated} we also have $G_{ij} = \langle G_i, G_j \rangle$.

\begin{ntn}\label{fhat}
The fusion systems $\Ff_i = \Ff_S(G_i)$ and $\Ff_{ij} = \Ff_S(G_{ij})$, $i,j \in I$, are saturated and constrained with $G_i$, $G_{ij}$ respectively, $p'$-reduced $p$-constrained finite groups. Denote $\wG_i = O^{p'}(G_i)$ and set $\wFf_i = O^{p'}(\Ff_i)$, for $i \in I$. According to Proposition \ref{oprimeconstrained}, $\wFf_i = \Ff_S(\wG_i)$ and the fusion system $\wFf_i$ is saturated and constrained. Define $\wFf = \langle \wFf_i; i \in I \rangle$ and let $\wBb = \langle N_{\wFf_i}(S); i \in I \rangle$.
\end{ntn}

\begin{lem}\label{sathatb}
The fusion subsystem $\wBb$ is saturated and constrained, and $\wBb = N_{\wFf}(S)$. Further $\wBb = \Ff_S(\widehat{B})$ with $\widehat{B} = \langle N_{\wG_i}(S); i \in I \rangle$.
\end{lem}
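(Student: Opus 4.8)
The plan is to realize $\wBb$ as the fusion system of the explicit finite group $\widehat{B}=\langle N_{\wG_i}(S);\,i\in I\rangle$, and then to recognize $\Ff_S(\widehat{B})$ as $N_{\wFf}(S)$. Two structural facts will carry the argument: that $\widehat{B}$ is contained in $B$, so that Lemma~\ref{pprimep} makes it $p'$-reduced $p$-constrained for free; and that $S$ is the maximal object of every fusion system in play, which forces automorphisms of $S$ to factor entirely through automorphisms of $S$.

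First I would locate $\widehat{B}$ inside $B$. Since $\Bb=N_{\Ff}(S)$ has $O_p(\Bb)=S$, its $p'$-reduced $p$-constrained realization satisfies $O_p(B)=S$; hence $S\trianglelefteq B$ and $B\le N_{G_i}(S)$, and as $B\cong N_{G_i}(S)$ by Proposition~\ref{diagfus} a comparison of orders gives $B=N_{G_i}(S)$ inside $G$. Because $\wG_i=O^{p'}(G_i)$ contains every $p$-element, $S\le\wG_i$, so $S\in\Syl_p(\wG_i)$ and $N_{\wG_i}(S)=\wG_i\cap N_{G_i}(S)=\wG_i\cap B$. Therefore $\widehat{B}=\langle\wG_i\cap B;\,i\in I\rangle\le B$ with $S\le\widehat{B}$, whence $S\in\Syl_p(\widehat{B})$; since every generator normalizes $S$, also $S=O_p(\widehat{B})\trianglelefteq\widehat{B}$. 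As $\widehat{B}$ is an overgroup of $S$ in $B$, Lemma~\ref{pprimep} shows it is $p'$-reduced $p$-constrained, so $\Ff_S(\widehat{B})$ is saturated (by~\ref{fgsatfs}) and constrained, with $O_p(\Ff_S(\widehat{B}))=S$.

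Next I would prove $\wBb=\Ff_S(\widehat{B})$, using the standard fact (proved just as \cite[Proposition~3.8]{lib1}) that $N_{\wFf_i}(S)=\Ff_S(N_{\wG_i}(S))$. The inclusion $\wBb\subseteq\Ff_S(\widehat{B})$ is then immediate because each $N_{\wG_i}(S)\le\widehat{B}$. For the reverse inclusion, take a morphism $c_g\colon P\to Q$ of $\Ff_S(\widehat{B})$ with $g\in\widehat{B}\le B=N_{G_i}(S)$, so that $g$ normalizes $S$; writing $g=g_1\cdots g_n$ with $g_k\in N_{\wG_{i_k}}(S)$, each $c_{g_k}$ is an automorphism of $S$ lying in $\Aut_{N_{\wG_{i_k}}(S)}(S)=\Aut_{\wFf_{i_k}}(S)\subseteq N_{\wFf_{i_k}}(S)\subseteq\wBb$, so the composite $c_g$ is an automorphism of $S$ in $\wBb$ and its restriction $c_g\colon P\to Q$ lies in $\wBb$. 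Thus $\wBb=\Ff_S(\widehat{B})$ is saturated and constrained and is realized by $\widehat{B}$.

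Finally I would establish $\wBb=N_{\wFf}(S)$. Since each $N_{\wFf_i}(S)\subseteq\wFf_i\subseteq\wFf$ we have $\wBb\subseteq\wFf$, and as $O_p(\wBb)=S$ every $\wBb$-morphism extends to an element of $\Aut_{\wBb}(S)\subseteq\Aut_{\wFf}(S)$, giving $\wBb\subseteq N_{\wFf}(S)$. For the reverse inclusion it suffices, by Definition~\ref{norcenfs}, to show $\Aut_{\wFf}(S)\subseteq\Aut_{\wBb}(S)$, and this is where the maximality of $S$ is decisive: writing $\alpha\in\Aut_{\wFf}(S)$ as a composite of restrictions of $\wFf_i$-morphisms, each restriction with domain $S$ is injective with image of order $|S|$, hence is already an automorphism of $S$ in some $\wFf_i$; inductively every factor lies in $\Aut_{\wFf_i}(S)\subseteq\wBb$, so $\alpha\in\Aut_{\wBb}(S)$, and restricting $\alpha$ places every morphism of $N_{\wFf}(S)$ in $\wBb$. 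I expect the load-bearing step to be the identification $N_{G_i}(S)=B$, which is what confines $\widehat{B}$ to $B$ and lets Lemma~\ref{pprimep} deliver saturation and constrainedness at no cost; without an ambient $p'$-reduced $p$-constrained group containing $\widehat{B}$, a direct proof that $\wBb$ is saturated would be delicate. The only other point demanding care, the inclusion $\Aut_{\wFf}(S)\subseteq\Aut_{\wBb}(S)$, could a priori fail for a generated fusion system through factorizations passing via proper subgroups, but cannot here because $S$ is the top object and no factorization of an $S$-automorphism can leave $\Aut(S)$.
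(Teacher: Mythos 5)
Your proof is correct, and it reaches the same three conclusions by the same underlying mechanisms as the paper: the decomposition of elements of $\Aut_{\wFf}(S)$ into automorphisms of $S$ lying in the $\wFf_i$ (justified, as you note, by the maximality of $S$ as an object), the identification $B = N_{G_i}(S)$, the model-theoretic fact $N_{\wFf_i}(S)=\Ff_S(N_{\wG_i}(S))$, and Lemma \ref{pprimep}. The organizational difference is real, though: the paper first proves saturation of $\wBb$ abstractly (every $\wBb$-morphism extends to an automorphism of $S$, and $\Inn(S)$ is Sylow in $\Aut_{\wBb}(S)\le\Aut_{\Bb}(S)$), then invokes Aschbacher's Proposition \ref{subconstr} to produce an abstract $p'$-reduced $p$-constrained subgroup $\widehat{B}\le B$ realizing $\wBb$, and only afterwards identifies it with $T=\langle N_{\wG_i}(S); i\in I\rangle$ by a two-sided inclusion plus Theorem \ref{thmconstraint}. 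You instead build $\widehat{B}=\langle N_{\wG_i}(S); i\in I\rangle$ concretely at the outset, observe it is an overgroup of $S$ in $B$ so that Lemma \ref{pprimep} gives $p'$-reduced $p$-constrainedness immediately, and read off saturation and constrainedness of $\wBb$ from the finite-group realization. This bypasses Proposition \ref{subconstr} and the uniqueness argument entirely and is, if anything, more self-contained; what you give up is only that the paper's route makes the saturation of $\wBb$ visibly independent of the explicit group model. Your closing caution about the load-bearing steps ($B=N_{G_i}(S)$ and the fact that no factorization of an $S$-automorphism can leave $\Aut(S)$) correctly identifies exactly the points the paper also leans on.
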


\begin{proof}
Because $N_{\wFf_i}(S) \subseteq N_{\Ff}(S)$ it follows that $\wBb \subseteq \Bb$. Thus every morphism in $\wBb$ extends to an $\Ff$-automorphism of $S$, and this implies that $\wBb$ is a saturated fusion system. It is obviously constrained since $S$ is a normal $\wBb$-centric subgroup. Next observe that since $N_{\wFf_i}(S) \subseteq N_{\wFf}(S)$ for all $i \in I$, it follows that $\wBb \subseteq N_{\wFf}(S)$. Let now $\vp \in \Hom_{N_{\wFf}(S)}(P,Q)$ for $P, Q \le S$. There exists a morphism $\widehat{\vp} \in \Aut_{\wFf}(S)$ which extends $\vp$. Since $\wFf=\langle \wFf_i; i \in I \rangle$, there are morphisms $\widehat{\psi}_j \in \Aut_{\wFf_i}(S)$, $i \in I$ and $j=1, \ldots n$, such that $\widehat{\vp}=\widehat{\psi}_n \fc \ldots \fc \widehat{\psi}_1$. It follows now that there are subgroups $P=P_0, P_1, \ldots P_n=Q$ of $S$ and morphisms $\psi_j:P_{j-1} \rightarrow P_j$ with $\psi_j={\widehat{\psi}_j}|_{P_{j-1}}$ and $\vp = \psi_n \fc \ldots \fc \psi_1$. Because $\psi_j$ is the restriction of an $\wFf_i$-automorphism of $S$, it follows that $\psi_j$ is a morphism in $N_{\wFf_i}(S)$; this proves that $\vp \in \Hom_{\wBb}(P,Q)$.

Since $\wBb \subseteq \Bb$, according to Proposition \ref{subconstr} and Lemma \ref{pprimep}, there exists a $p'$-reduced $p$-constrained subgroup $\widehat{B}$ of $B$ which realizes $\wBb$. Using \cite[Proposition 3.8]{lib1}, we can identify $B$ with $N_{G_i}(S)$, for all $i \in I$. Set $T: = \langle N_{\wG_i}(S); i \in I \rangle \le B$ and notice that $\Ff_S(T)$ is a saturated constrained fusion system. We will show that $\widehat{B} = T$. Since each $\wFf_i = \Ff_S(\wG_i)$ is saturated, another application of \cite[Proposition 3.8]{lib1} gives that $N_{\wFf_i}(S) = \Ff_S(N_{\wG_i}(S))$. Thus $\wBb \subseteq \Ff_S(T)$ and, using Proposition \ref{subconstr}, we can choose $\widehat{B} \le T$. Conversely, let $g \in T$ and $P \le S$ be such that $^gP \le S$. But $g = g_m \cdots g_1$ for $g_i \in N_{\wG_{j_i}}(S)$ and $j_i \in I$. Thus $c_g : P \rightarrow \;^g P$ can be decomposed as $P \rightarrow \; ^{g_1}P \rightarrow \ldots \rightarrow \; ^{g_m \cdots g_1}P= \;^g P$ with $^{g_i \cdots g_1} P \le S$ since each $g_i$ normalizes $S$. Therefore $c_{g_i} : \;^{g_{i-1} \cdots g_1} P \rightarrow \; ^{g_i \cdots g_1} P$ is in fact a morphism in $N_{\wFf_{j_i}}(S) \subseteq \wBb$. Thus $\wBb = \Ff_S(T)$ and Theorem \ref{thmconstraint} together with the fact that $\widehat{B} \le T$ give that $\widehat{B}=T$.
\end{proof}

\begin{lem}\label{sathatg}
The fusion systems $\Gg_i := \langle \wFf_i, \wBb \rangle$ and $\Gg_{ij}: = \langle \Gg_i, \Gg_j \rangle$ are saturated and constrained for all $i, j \in I$.
\end{lem}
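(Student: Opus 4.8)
The plan is to realize each of $\Gg_i$ and $\Gg_{ij}$ as the fusion system of an explicit finite subgroup of $G_i$, respectively $G_{ij}$. Since any overgroup of the Sylow $p$-subgroup $S$ inside a $p'$-reduced $p$-constrained group is again $p'$-reduced $p$-constrained (Lemma \ref{pprimep}), its fusion system is automatically saturated and constrained (see \ref{fgsatfs}). Thus it suffices, for each subsystem, to exhibit the right overgroup of $S$ and to check that the fusion system it realizes coincides with the generated subsystem. Throughout I use that, by Lemma \ref{sathatb} and the identification $B=N_{G_i}(S)$, the group $\widehat{B}=\langle N_{\wG_k}(S); k\in I\rangle$ realizes $\wBb$, lies in $B\le G_i\cap G_j\le G_{ij}$, and normalizes $S$.

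For $\Gg_i$ I would argue as follows. Since $\wG_i=O^{p'}(G_i)\trianglelefteq G_i$ and $\widehat{B}\le N_{G_i}(S)\le G_i$, the product $H_i:=\wG_i\widehat{B}$ is a subgroup of $G_i$ containing $S$. The inclusion $\Gg_i=\langle\wFf_i,\wBb\rangle\subseteq\Ff_S(H_i)$ is immediate. For the reverse inclusion, write any $g\in H_i$ as $g=ab$ with $a\in\wG_i$ and $b\in\widehat{B}$. If $P\le S$ and ${}^gP\le S$, then because $b$ normalizes $S$ we have ${}^bP\le S$, so $c_g=c_a\fc c_b$ with $c_b\in\Ff_S(\widehat{B})=\wBb$ and $c_a\in\Ff_S(\wG_i)=\wFf_i$. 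Hence $\Ff_S(H_i)=\Gg_i$, and $\Gg_i$ is saturated and constrained.

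For $\Gg_{ij}$ I would set $H_{ij}:=\langle H_i,H_j\rangle\le G_{ij}$, again an overgroup of $S$, so that $\Ff_S(H_{ij})$ is saturated and constrained and $\Gg_{ij}=\langle\wFf_i,\wFf_j,\wBb\rangle\subseteq\Ff_S(H_{ij})$. Since $\widehat{B}\le G_i\cap G_j$ normalizes the normal subgroups $\wG_i\trianglelefteq G_i$ and $\wG_j\trianglelefteq G_j$, the subgroup $K:=\langle\wG_i,\wG_j\rangle$ is normal in $H_{ij}$ and $H_{ij}=K\widehat{B}$; moreover $K=O^{p'}(H_{ij})$, because $S\le K$ and $\widehat{B}/S$ is a $p'$-group, so $H_{ij}/K$ is a $p'$-group. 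Factoring $g\in H_{ij}$ as $g=ab$ with $a\in K$, $b\in\widehat{B}$ and arguing as in the previous paragraph, the $\widehat{B}$-part contributes a morphism in $\wBb$; thus the reverse inclusion $\Ff_S(H_{ij})\subseteq\Gg_{ij}$ reduces to showing $\Ff_S(K)\subseteq\langle\wFf_i,\wFf_j,\wBb\rangle$.

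The hard part is precisely this last inclusion. In contrast with the $\Gg_i$ case, neither $\wG_i$ nor $\wG_j$ is normal in $K=\langle\wG_i,\wG_j\rangle$, so a conjugating element of $K$ cannot be split into an $\wFf_i$-factor and an $\wFf_j$-factor while keeping all intermediate conjugates inside $S$, and the naive factorization breaks down. I would attack it through Alperin's fusion theorem (\ref{alperin}) in the saturated constrained system $\Ff_S(K)$: it then suffices to show that $\Aut_{\Ff_S(K)}(S)$ lies in $\wBb$ and that for every $\Ff_S(K)$-essential subgroup $E$ the generators of $\Aut_{\Ff_S(K)}(E)$ already occur in $\wFf_i$ or $\wFf_j$. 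Here I would exploit that $\wFf_i=O^{p'}(\Ff_i)$ has essential rank one with the same essential subgroups as $\Ff_i$ (Proposition \ref{puigprop}(iv)), and that a strongly $p$-embedded subgroup of $\Out_{\Ff_S(K)}(E)$ forces the relevant $p'$-automorphisms to be visible already in $\Aut_{\wG_i}(E)$ or $\Aut_{\wG_j}(E)$. The genuine obstacle is to rule out ``new'' essential subgroups of $K$, that is, ones carrying fusion not seen inside $\wG_i$ or $\wG_j$; this is exactly the delicate point in generating a saturated system from two saturated pieces. Once the essentials and $\Aut_{\Ff_S(K)}(S)$ are controlled, $\Ff_S(H_{ij})=\Gg_{ij}$ follows and $\Gg_{ij}$ is saturated and constrained.
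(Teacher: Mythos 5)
Your construction is exactly the paper's: you realize $\Gg_i$ as $\Ff_S(H_i)$ with $H_i=\wG_i\widehat{B}$ and $\Gg_{ij}$ as $\Ff_S(H_{ij})$ with $H_{ij}=\langle\wG_i,\wG_j\rangle\widehat{B}=\langle H_i,H_j\rangle$, and your treatment of $\Gg_i$ (factor $h=ab$, note that $b$ normalizes $S$ so the intermediate conjugate ${}^bP$ stays inside $S$, and conclude $c_h=c_a\fc c_b$ with $c_b\in\wBb$ and $c_a\in\Ff_S(\wG_i)=\wFf_i$) is the paper's argument verbatim. The paper reaches constrainedness slightly differently --- it observes directly that $O_p(\Ff_i)$ is normal in $\Gg_i\subseteq\Ff_i$ and is $\Gg_i$-centric because it is $\Ff_i$-centric, without first identifying $\Gg_i$ with $\Ff_S(H_i)$ --- but your route through Lemma \ref{pprimep} and \ref{fgsatfs} is equally valid.

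The divergence is at the last step. For the reverse inclusion $\Ff_S(H_{ij})\subseteq\Gg_{ij}$ the paper writes $h=gb$ with $g\in\tG_{ij}:=\langle\wG_i,\wG_j\rangle$ and $b\in\widehat{B}$ and then says ``argue as before''; read literally, this treats $c_g\colon{}^bP\rightarrow{}^hP$ as a morphism of $\langle\wFf_i,\wFf_j\rangle$ merely because $g\in\tG_{ij}$ and both groups lie in $S$. You are right that this is not parallel to the rank-one case: there $g$ lay in a single $\wG_i$, so $c_g\in\Ff_S(\wG_i)=\wFf_i$ holds by definition, whereas here one needs $\Ff_S(\tG_{ij})\subseteq\langle\wFf_i,\wFf_j,\wBb\rangle$, i.e. that the fusion system of the generated group equals the generated fusion system, and a factorization $g=g_1\cdots g_n$ with $g_k\in\wG_i\cup\wG_j$ gives no control over the intermediate conjugates of $P$. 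So your proposal is incomplete exactly where you say it is --- but the missing inclusion is the step the paper compresses into ``argue as before,'' not something the paper supplies and you overlooked. Your suggested repair via Alperin's fusion theorem applied to the saturated constrained system $\Ff_S(H_{ij})$ (note $\Aut_{\Ff_S(H_{ij})}(S)=\Aut_{\widehat{B}}(S)\subseteq\Aut_{\wBb}(S)$, then control the automorphism groups of the essential subgroups) is the natural way to close the gap, but as written neither your proposal nor the paper's proof carries it out.
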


\begin{proof}
Since $\Gg_i \subseteq \Ff_i$, it follows that $O_p(\Ff_i)$ is normal in $\Gg_i$, it is $\Gg_i$-centric because it is $\Ff_i$-centric. Thus $\Gg_i$ is a constrained fusion system on $S$; similarly $\Gg_{ij}$ is also constrained.

Let $\wBb \subseteq \Bb \subseteq \Ff_i$ and the corresponding $p'$-reduced $p$-constrained finite groups $\widehat{B} \le B \le G_i$, where $\widehat{B}$ is as in Lemma \ref{sathatb}. We will show that $\Gg_i = \Ff_S(H_i)$ with $H_i : = \wG_i \widehat{B}$, a $p'$-reduced $p$-constrained subgroup of $G_i$. This will suffice to prove the saturation of $\Gg_i$ (according to \ref{fgsatfs}). Clearly $\wFf_i, \wBb \subseteq \Ff_S(H_i)$. Conversely, let $h \in H_i$ and $P \le S$ be such that $^h P \le S$ so $c_h \in \Hom_{\Ff_S(H_i)} (P, S)$. But $h = gb \in \wG_i$ with $g \in \widehat{G}_i$ and $b \in \widehat{B}$. Hence $c_h : P \rightarrow \; ^b P \rightarrow \; ^{gb} P  =\; ^h P$, where $P,\;^b P,\;^h P$ are subgroups of $S$, showing that $c_h$ is indeed a morphism in $\Gg_i$.

To show that $\Gg_{ij}$ is saturated, we will prove that $\Gg_{ij} = \Ff_S(H_{ij})$ where $H_{ij} : = \langle \wG_i, \wG_j \rangle \widehat{B}$. Set $\tG_{ij} : = \langle \wG_i, \wG_j \rangle$ and observe that since $B \le G_i \cap G_j$, the subgroup $\tG_{ij}$ is normal in $G_{ij}$. Thus $H_{ij}$ is indeed a subgroup of $G_{ij}$. Next, notice that $H_{ij} = \langle \wG_i, \wG_j \rangle \widehat{B} = \langle \wG_i \widehat{B}, \wG_j \widehat{B} \rangle = \langle H_i, H_j \rangle$. Hence $\Gg_{ij} \subseteq \Ff_S(H_{ij})$. To prove the opposite inclusion, let $h \in H_{ij}$ and $P \le S$ with $^hP \le S$. Write $h = gb$ with $g \in \tG_{ij}$ and $b \in \widehat{B}$ and argue as before.
\end{proof}

\begin{lem}\label{parhat}
Maintain the notations from above. The fusion systems $(\Gg_i; i \in I)$ form a parabolic family in $\wFf$.
\end{lem}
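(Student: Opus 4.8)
The plan is to verify, for the family $\{\Gg_i; i\in I\}$ inside $\wFf$, the requirements of Definition \ref{parffus}, taking $\wBb=N_{\wFf}(S)$ (Lemma \ref{sathatb}) as the analogue of $\Bb$. Each $\Gg_i$ is already saturated and constrained, and the amalgamation property (F4), namely that $\Gg_{ij}=\langle\Gg_i,\Gg_j\rangle$ is saturated and constrained, is exactly Lemma \ref{sathatg}; so the genuinely new points are essential rank one and (F1), (F2), (F3). Before addressing these I would record two group-level identities about the realizing groups $\widehat{B}\le H_i=\wG_i\widehat{B}$. Since $\wG_i\cap B=N_{\wG_i}(S)\le\widehat{B}$ (by the definition of $\widehat{B}$ in Lemma \ref{sathatb}) and $\widehat{B}\le B$, the Dedekind identity gives $H_i\cap B=\widehat{B}(\wG_i\cap B)=\widehat{B}$, and likewise $N_{H_i}(S)=N_{\wG_i}(S)\widehat{B}=\widehat{B}$; the latter translates into $N_{\Gg_i}(S)=\wBb$ and hence $\Aut_{\Gg_i}(S)=\Aut_{\wBb}(S)$ for every $i$. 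Combining $H_j\cap B=\widehat{B}$ with $H_i\cap H_j\le G_i\cap G_j=B$ (property (P2) from Lemma \ref{fusch}) then yields $H_i\cap H_j=H_i\cap(H_j\cap B)=H_i\cap\widehat{B}=\widehat{B}$.

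With these in hand, (F1) and (F2) follow formally. For (F1): by Proposition \ref{puigprop}(iv) applied to $\wFf_i=O^{p'}(\Ff_i)\subseteq\Ff_i$, the system $\wFf_i$ has the same (proper) essential subgroups as $\Ff_i$, so $S$ is not normal in $\wG_i$, whence $\wG_i\not\le B\supseteq\widehat{B}$ and $H_i=\wG_i\widehat{B}\supsetneq\widehat{B}$, i.e. $\wBb\subsetneq\Gg_i$. For (F2), the containments $\wFf=\langle\wFf_i\rangle\subseteq\langle\Gg_i\rangle\subseteq\wFf$ give $\wFf=\langle\Gg_i;i\in I\rangle$. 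For minimality I would invoke the Frattini decomposition $\Ff_i=\langle O^{p'}(\Ff_i),N_{\Ff_i}(S)\rangle=\langle\wFf_i,\Bb\rangle$ (Corollary \ref{frattini}, using $N_{\Ff_i}(S)=\Bb$), so that $\Ff=\langle\wFf,\Bb\rangle$; then if some proper subset $\{\Gg_j;j\in J\}$ generated $\wFf$ we would have $\wFf=\langle\wFf_j,\wBb;j\in J\rangle$, and since $\wBb\subseteq\Bb$ this forces $\Ff=\langle\wFf,\Bb\rangle=\langle\wFf_j,\Bb;j\in J\rangle=\langle\Ff_j;j\in J\rangle$, contradicting (F2) for $\Ff$.

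For essential rank one I would argue that the essential-subgroup structure of $\Gg_i$ is governed by $\wFf_i=O^{p'}(\Ff_i)$: Proposition \ref{puigprop}(iv) applied to $\wFf_i\subseteq\Gg_i\subseteq\Ff_i$ (the hypothesis holds because $O^{p'}(\Aut_{\Ff_i}(Q))\le\Aut_{\wFf_i}(Q)\le\Aut_{\Gg_i}(Q)$ by Definition \ref{oprime}) identifies the $\Gg_i$-essential subgroups with the $\Ff_i$-essential ones as a set, and since the fusion among the members of the single $\Ff_i$-class already lies in $\wFf_i\subseteq\Gg_i$, that class remains a single $\Gg_i$-class. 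Finally, for (F3) one has $\wBb\subseteq\Gg_i\cap\Gg_j\subseteq\Ff_i\cap\Ff_j=\Bb$ by (F3) for $\Ff$, while the group identity $H_i\cap H_j=\widehat{B}$ says that at the level of realizing groups the intersection is exactly $\widehat{B}$; the remaining task is to promote this to the equality $\Gg_i\cap\Gg_j=\wBb$ of fusion systems.

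The hard part will be precisely this last promotion, because in general the fusion system of a group intersection is only \emph{contained} in the intersection of the fusion systems, so $H_i\cap H_j=\widehat{B}$ gives only $\wBb=\Ff_S(H_i\cap H_j)\subseteq\Gg_i\cap\Gg_j$. To force the reverse inclusion I would exploit the structural facts assembled above: every morphism of $\Gg_i\cap\Gg_j$ lies in $\Bb=N_{\Ff}(S)$ and is therefore a restriction of an element of $\Aut_{\Bb}(S)$, whereas $\Aut_{\Gg_i\cap\Gg_j}(S)=\Aut_{\wBb}(S)$ by the identity $N_{\Gg_i}(S)=\wBb$. Noting that $\wBb$ has index prime to $p$ in $\Bb$ (every $p$-element of $B=N_{G_i}(S)$ is a $p$-element of $G_i$, hence lies in $\wG_i=O^{p'}(G_i)$ and normalizes $S$, so lies in $N_{\wG_i}(S)\le\widehat{B}$), I would apply Proposition \ref{puigprop}(i) to write a morphism $\vp\in\Hom_{\Gg_i\cap\Gg_j}(P,S)$ as $\vp=\alpha\fc\zeta$ with $\alpha\in\Aut_{\Bb}(S)$ and $\zeta\in\Hom_{\wBb}(P,S)$, and then induct on $|S:P|$ in the style of the proof of Proposition \ref{puigprop}(i), the base case $P=S$ being immediate from $\Aut_{\Gg_i\cap\Gg_j}(S)=\Aut_{\wBb}(S)$. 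This reconciliation of the group-theoretic and fusion-theoretic intersections is the one delicate step; everything else reduces formally to the identities $N_{H_i}(S)=\widehat{B}$ and $H_i\cap H_j=\widehat{B}$ together with Proposition \ref{puigprop} and the Frattini decomposition.
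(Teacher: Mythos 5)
Your verification of (F1), (F2), (F4) and of essential rank one tracks the paper's proof closely: the paper likewise deduces essential rank one from Proposition \ref{puigprop}(iv) applied along $\wFf_i\subseteq\Gg_i\subseteq\Ff_i$, and handles (F1) and the minimality in (F2) by exactly the Frattini decomposition $\Ff_i=\langle\wFf_i,\Bb\rangle$ that you invoke (for (F1) the paper argues that $\wFf_i\subseteq\wBb$ would force $\Ff_i=\langle\wFf_i,\Bb\rangle=\Bb$, which is a little more direct than your detour through $S\not\trianglelefteq\wG_i$, since $H_i\supsetneq\widehat{B}$ only yields $\Gg_i\supsetneq\wBb$ after an appeal to the uniqueness in Theorem \ref{thmconstraint}). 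Your auxiliary group identities $N_{H_i}(S)=\widehat{B}$ and $H_i\cap H_j=\widehat{B}$ are correct but turn out not to be needed.

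The genuine gap is exactly where you flag it: (F3). You reduce to promoting $H_i\cap H_j=\widehat{B}$ to $\Gg_i\cap\Gg_j=\wBb$, and the sketched repair does not close. Factoring $\vp=\alpha\fc\zeta$ with $\alpha\in\Aut_{\Bb}(S)$ and $\zeta\in\Hom_{\wBb}(P,S)$ via Proposition \ref{puigprop}(i) merely shifts the problem to showing that $\alpha|_{\zeta(P)}=\vp\fc\zeta^{-1}\in\Hom_{\Gg_i\cap\Gg_j}(\zeta(P),S)$ lies in $\wBb$; since $|S:\zeta(P)|=|S:P|$ there is no descent in the index, and the Alperin-type reduction used inside the proof of Proposition \ref{puigprop}(i) is unavailable because $\Gg_i\cap\Gg_j$ is not known to be saturated. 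The paper closes (F3) by a different route that never leaves the level of fusion systems: since $\wBb=N_{\wFf}(S)\subseteq\wFf$ by Lemma \ref{sathatb}, each $\Gg_i=\langle\wFf_i,\wBb\rangle$ is contained in $\wFf$, and therefore
$$\wBb\subseteq\Gg_i\cap\Gg_j\subseteq(\Ff_i\cap\Ff_j)\cap\wFf=\Bb\cap\wFf\subseteq N_{\wFf}(S)=\wBb .$$
The containment you never exploit is $\Gg_i\cap\Gg_j\subseteq\wFf$; it lets axiom (F3) for $\Ff$ do all the work and replaces your ``promotion'' step by the single inclusion $\Bb\cap\wFf\subseteq N_{\wFf}(S)$. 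You should replace the final paragraph of your argument with this chain.
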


\begin{proof}
It follows from Lemma \ref{sathatg} that $\Gg_i$ and $\Gg_{ij}$, for all $i, j \in I$, are saturated, constrained subsystems of $\wFf$, and contain the subsystem $\wBb = N_{\wFf}(S)$.

{\it Step 1}: Each $\Gg_i$ has essential rank one.\\
According to Proposition \ref{puigprop}(iv), $Q$ is $\Gg_i$-essential if and only if $Q$ is $\wFf_i$-essential if and only if $Q$ is $\Ff_i$-essential. Since $\Ff_i$ has essential rank one, $\Gg_i$ also has essential rank one.

{\it Step 2}: $\wBb$ is a proper subsystem of $\Gg_i$.\\
Assume by contradiction that $\wFf_i \subseteq \wBb$, for some $i \in I$ and recall that $\Bb \subsetneq \Ff_i$. Then by Corollary \ref{frattini}, $\Ff_i = \langle \wFf_i, \Bb \rangle = \Bb$, a contradiction.

{\it Step 3}: The collection $\Gg_i, i \in I$ is a minimal generating set for $\wFf$.\\
Assume by contradiction that $\wFf$ is generated by $\Gg_j, j \in J$ with $J$ a proper subset of $I$. It follows from Corollary \ref{frattini} that $\Ff_i = \langle \wFf_i, \Bb \rangle$. Thus we have the following equalities:
\begin{align*}
\Ff =& \langle \Ff_i, i \in I \rangle= \langle \langle \wFf_i, \Bb \rangle, i \in I \rangle = \langle \wFf_i, i \in I; \Bb\rangle = \langle \wFf, \Bb \rangle\\
 =& \langle \wFf_j, j \in J; \Bb\rangle = \langle \langle \wFf_j, \Bb \rangle, j \in J \rangle = \langle \Ff_j, j \in J\rangle
 \end{align*}
which contradicts the fact that $\Ff_i, i \in I$ is a minimal generating set for $\Ff$.

{\it Step 4}: The following inclusions hold:
$$\wBb \subseteq \Gg_i \cap \Gg_j \subseteq (\Ff_i \cap \Ff_j ) \cap \wFf = \Bb \cap \wFf \subseteq N_{\wFf}(S) = \wBb$$
which verify (F3).

Hence the set $(\Gg_i, i \in I)$ forms a parabolic family in the fusion system $\wFf$.
\end{proof}

\begin{num}\label{thehat}
Let $\widehat{B}, H_i$ and $H_{ij}$, $i, j \in I$ be the groups constructed in \ref{sathatg} and \ref{parhat}. Set $\wG : = \langle H_i; i \in I \rangle \le G$ and observe that
\begin{align*}
\wG = &\langle H_i; i \in I \rangle = \langle \wG_i \widehat{B}; i \in I \rangle = \langle \wG_i, \widehat{B}; i \in I \rangle =\\
&\langle \wG_i, \langle N_{\wG_j}(S); j \in I \rangle; i \in I \rangle = \langle \wG_i, N_{\wG_i}(S); i \in I \rangle = \langle \wG_i; i \in I  \rangle
\end{align*}
Recall that $\Cc = \Cc(G; B, G_i, i \in I)$ denotes the chamber system associated to $\Ff$ and let $\wCc = \Cc(\wG, \widehat{B}, H_i, i \in I)$ be the chamber system associated to $\wFf$.
\end{num}

We can now formulate the main result of this section:

\begin{thm}\label{cchat}
Let $(\Ff, \Cc)$ be a fusion-chamber system pair. Let $\wFf$ be defined as in \ref{fhat} and let $\wG$ and $\wCc$ be as in \ref{thehat}. If $O_p(\wFf)=1$ then $(\wFf, \wCc)$ is a fusion-chamber system pair. Further, assume that the following two conditions hold:
\begin{list}{\upshape\bfseries}
{\setlength{\leftmargin}{1cm}
\setlength{\labelwidth}{1cm}
\setlength{\labelsep}{0.2cm}
\setlength{\parsep}{0.5ex plus 0.2ex minus 0.1ex}
\setlength{\itemsep}{0cm}}
\item[(a).] $\wCc^P$ is connected for all subgroups $P$ of $\wG$;
\item[(b).] If $P$ is $\wFf$-centric and if $R$ is a $p$-subgroup of $\Aut_{\wG}(P)$ then $(\wCc^P/C_{\wG}(P))^R$ is connected.
\end{list}
Then:
\begin{list}{\upshape\bfseries}
{\setlength{\leftmargin}{1cm}
\setlength{\labelwidth}{1cm}
\setlength{\labelsep}{0.2cm}
\setlength{\parsep}{0.5ex plus 0.2ex minus 0.1ex}
\setlength{\itemsep}{0cm}}
\item[${\rm(i).}$] $\wFf = \Ff_S(\wG)$ is a saturated fusion system that is normal in $\Ff$;
\item[${\rm(ii).}$] The map $\vp : \wCc \rightarrow \Cc$ given by $\vp(g\widehat{B}) = gB$, for $g \in \wG$, is a $2$-covering of chamber systems.
\end{list}
\end{thm}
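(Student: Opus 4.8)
The plan is to settle the three assertions in turn, assembling the structural lemmas already in place. For the claim that $(\wFf,\wCc)$ is a fusion-chamber system pair, I would verify the two clauses of Definition \ref{cf}. The hypothesis $O_p(\wFf)=1$ gives the first half of clause (i), and Lemma \ref{parhat} supplies the parabolic family $(\Gg_i;i\in I)$ in $\wFf$. For clause (ii), Lemmas \ref{sathatb} and \ref{sathatg} exhibit $\widehat{B},H_i,H_{ij}$ as $p'$-reduced $p$-constrained finite groups realizing $\wBb,\Gg_i,\Gg_{ij}$; since these are concrete subgroups of $G$ with $\wG=\langle H_i;i\in I\rangle$ by the computation in \ref{thehat}, the inclusions $\widehat{B}\hookrightarrow H_i\hookrightarrow H_{ij}$ realize the diagram of \ref{diagfus} for $\wFf$ and make $\wG$ a faithful completion of it. Thus $\wCc=\Cc(\wG;\widehat{B},H_i)$ is the chamber system demanded by \ref{cf}, and Lemma \ref{fusch} applied to $\wFf$ shows $(\widehat{B},H_i;i\in I)$ is indeed a parabolic system in $\wG$.

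The saturation half of (i) is then immediate: $(\wFf,\wCc)$ is a fusion-chamber pair and the present hypotheses (a),(b) are exactly those of Theorem \ref{mythm}, so that theorem yields $\wFf=\Ff_S(\wG)$ saturated. For normality I would first prove the group-theoretic facts $\wG\trianglelefteq G$ and $G=\wG B$. Since $B\le G_i$ for every $i$ and $\wG_i=O^{p'}(G_i)\trianglelefteq G_i$, the group $B$ normalizes each $\wG_i$, hence normalizes $\wG=\langle\wG_i;i\in I\rangle$, so $\wG B$ is a subgroup; a Frattini argument using $S\in\Syl_p(\wG_i)$ and $B=N_{G_i}(S)$ gives $G_i=\wG_i B$, whence $\wG B$ contains every $G_i$ and equals $G$. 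As both factors of $G=\wG B$ normalize $\wG$, we obtain $\wG\trianglelefteq G$. Normality of $\wFf$ in $\Ff$ then follows formally from \ref{nmlfs}: because $\Ff\subseteq\Ff_S(G)$ every $\Ff$-isomorphism has the form $c_g$ with $g\in G$, while every morphism of $\wFf=\Ff_S(\wG)$ is $c_h$ with $h\in\wG$, and the conjugate $c_g\fc c_h\fc c_g^{-1}=c_{ghg^{-1}}$ is again an $\wFf$-morphism because $ghg^{-1}\in\wG$.

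Turning to (ii), the map $\vp(g\widehat{B})=gB$ is well defined and type-preserving since $\widehat{B}\le B$ and $H_i\le G_i$, and it is surjective on chambers because $G=\wG B$. The substance is that $\vp$ carries each rank-$2$ residue isomorphically onto its image. The $\{i,j\}$-residue through $g\widehat{B}$ is $gH_{ij}/\widehat{B}$ and maps to $gG_{ij}/B$, and this restriction is a chamber-system isomorphism precisely when $H_{ij}B=G_{ij}$ and $H_{ij}\cap B=\widehat{B}$ (the rank-$1$ analogues $H_iB=G_i$ and $H_i\cap B=\widehat{B}$ being the special case that makes $\vp$ bijective on panels). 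The first equality is the Frattini/normalization argument above run inside $G_{ij}=\langle G_i,G_j\rangle$: as $B$ normalizes $\langle\wG_i,\wG_j\rangle$ one gets $G_{ij}=\langle\wG_i,\wG_j\rangle B=H_{ij}B$.

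The intersection identity $H_{ij}\cap B=\widehat{B}$ is the crux, and I expect it to be the main obstacle. Identifying $B$ with $N_{G_{ij}}(S)$ recasts it as $N_{H_{ij}}(S)=\widehat{B}$. I would first establish $N_{\Gg_{ij}}(S)=\wBb$ by sandwiching: $N_{\wBb}(S)=\wBb$ at the bottom, $N_{\wFf}(S)=\wBb$ (Lemma \ref{sathatb}) at the top, and $\wBb\subseteq\Gg_{ij}\subseteq\wFf$ in between. As in the proof of Lemma \ref{sathatb}, $N_{\Gg_{ij}}(S)$ is realized by $N_{H_{ij}}(S)$, so $\Ff_S(N_{H_{ij}}(S))=\wBb=\Ff_S(\widehat{B})$. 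Now $\widehat{B}\le N_{H_{ij}}(S)$, both groups are $p'$-reduced $p$-constrained (Lemma \ref{pprimep}) and realize the same constrained system $\wBb$; hence the order formula of Theorem \ref{thmconstraint} forces them to have equal order, so $N_{H_{ij}}(S)=\widehat{B}$. With both residue conditions in hand, $\vp$ maps every rank-$2$ residue isomorphically and is therefore a $2$-covering, completing the proof.
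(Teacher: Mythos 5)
Your proposal is correct and follows essentially the same route as the paper: build the parabolic family $(\Gg_i;i\in I)$ and the parabolic system $(\widehat{B},H_i;i\in I)$ in $\wG$ via Lemmas \ref{parhat} and \ref{fusch}, obtain saturation of $\wFf=\Ff_S(\wG)$ from Theorem \ref{mythm}, and reduce the $2$-covering claim to the two identities $H_{ij}B=G_{ij}$ (Frattini) and $H_{ij}\cap B=\widehat{B}$ (identifying both sides with normalizers of $S$ and invoking the uniqueness of the $p'$-reduced $p$-constrained model of $\wBb$), your direct bijectivity argument being equivalent to the paper's product-formula count. The only genuine divergence is the normality argument: you work at the group level, showing $G=\wG B$ and $\wG\trianglelefteq G$ so that $c_g\fc c_h\fc c_g^{-1}=c_{ghg^{-1}}$ stays in $\Ff_S(\wG)$, whereas the paper argues at the fusion-system level that $\Bb$ normalizes each $\wFf_i\trianglelefteq\Ff_i$ and hence normalizes $\wFf=\langle\wFf_i;i\in I\rangle$; your version is, if anything, more explicit about verifying the definition in \ref{nmlfs}.
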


\begin{proof}
By Lemma \ref{parhat} the set $\{ \Gg_i\;; i \in I \}$ forms a family of parabolic subsystems in $\Ff$; this gives rise to the collection of groups $\{ \widehat{B}, H_i\;; i \in I \}$ which is a parabolic system in $\wG$, according to Lemma \ref{fusch}. Recall that, by construction, the groups $B$ and $H_i$, for $i \in I$, are subgroups of $\wG$. Hence $(\Ff, \wCc)$ is a fusion-chamber system pair. 

(i). The saturation of $\wFf$ as well as the fact that $\wFf$ is realized by $\wG$ follow from an application of Theorem \ref{mythm}. To prove that $\wFf \trianglelefteq \Ff$, recall that $\Ff = \langle \Ff_i, i \in I \rangle = \langle \wFf_i, \Bb, i \in I \rangle = \langle \wFf, \Bb \rangle$. We have to show that $\Bb$ normalizes $\wFf$. But $\wFf_i \trianglelefteq \Ff_i \supseteq \Bb$ and since $\Bb$ normalizes each $\wFf_i$, it follows that $\Bb$ indeed normalizes $\wFf$.

(ii). The second part of the Theorem follows from \cite[Lemma 2.5]{timm87}, for completeness we provide the proof\footnote{See Section 3 for chamber systems and their covers.}. We have that (recall \ref{the hat}):
$$G = \langle G_i; i \in I \rangle = \langle \wG_iB; i \in I \rangle = \langle \wG_i; i \in I \rangle B = \wG B$$
thus the map $\vp: g \widehat{B} \mapsto gB$, for $g \in \wG$, is well-defined and surjective. Let $g \widehat{B}$ and $h \widehat{B}$ be two $i$-adjacent chambers in $\wCc$. Then $h^{-1}g \in H_i$. Since $H_i \le G_i$, it follows that $gG_i = hG_i$ and the chambers $gB$ and $hB$ are $i$-adjacent in $\Cc$. Thus $\vp$ is a morphism of chamber systems.

To show that $\vp$ is a $2$-covering, we must prove that $\vp_{ij}$, the restriction of $\vp$ to a rank two $\{i, j\}$-residue is bijective, for each $i, j \in I$. Clearly $\vp_{ij}$ is surjective. As $\Bb= N_{\Ff}(S) = N_{\Ff_{ij}}(S)$, and similarly $\wBb = N_{\wFf}(S) = N_{\Gg_{ij}}(S)$, we may use \cite[Proposition 3.8]{lib1}, Lemma \ref{pprimep} and Theorem \ref{thmconstraint}, to write $B = N_{G_{ij}}(S)$ and $\widehat{B} = N_{H_{ij}}(S)$. Thus $\widehat{B} = B \cap H_{ij}$. An application of the Frattini argument, together with the fact that $H_{ij} = \tG_{ij} \widehat{B}$ give that $G_{ij} = \tG_{ij} B = H_{ij} B$. The product formula gives:
$$|H_{ij}B| \cdot |H_{ij} \cap B| = |H_{ij}B| \cdot |\widehat{B}|  = |G_{ij}| \cdot |\widehat{B}| = |H_{ij}| \cdot |B|$$
which shows that the $(ij)$-residues in $\wCc$ and $\Cc$ have the same number of chambers. Hence $\vp_{ij}$ is injective. This concludes our proof that $\vp$ is a $2$-covering.
\end{proof}

\section{An application: classical parabolic families in fusion systems}

In this section we investigate fusion systems which contain parabolic families with specific properties. In finite group theory, the parabolic systems of type $\Ms$, defined below, are at the core of so called amalgam method and emphasize the deep connections between local analysis, on one side, and group geometries on the other side. For a comprehensive overview of the early results on parabolic systems of type $\Ms$, see Meixner \cite{meixnerrev}. For an up to date succinct overview on higher rank amalgams see Parker and Rowley \cite[Chapter 24]{sympaml}. The terminology is inspired by the structure of the groups of Lie type, in which a parabolic system $(B, G_i; i \in I)$ consists of the Borel subgroup $B$, together with the minimal parabolic subgroups $G_i$ containing it. We will show that these notions have natural generalizations to the context of fusion systems. We start with reviewing some of the standard terminology.

\subsection*{Chamber systems and classical parabolic systems}

A chamber system $\Cc$ of rank two is a {\it generalized digon} if and only if $\Cc$ is the chamber system associated to a parabolic system of the form $\Pp(G)=(G_i \cap G_j, G_i, G_j)$ with $G = G_iG_j=G_jG_j$ and $G_i \ne G_j$. A rank two chamber system $\Cc$ is a {\it classical generalized $m_{ij}$-gon}, for $m_{ij} \ge 3$, if $\Cc$ is isomorphic to $\Cc(G; B, G_i, G_j)$ where $G$ is an (essentially\footnote{The following groups are also allowed: $A_6$, $S_6$, $G_2(2)' \simeq U_3(3)$, $G_2(2)$, $^2F_4(2)'$ and $^2F_4(2)$.}) simple rank two group of Lie type in characteristic $p$, $B=G_i\cap G_j$ is a Borel subgroup of $G$ and $G_i$ and $G_j$ are the two maximal parabolic subgroups of $G$ containing $B$; here $m_{ij}$ denotes the integer that defines the Weyl group of $G$.

\begin{num}\label{ctcs}
A connected chamber system $\Cc$ over $I$ is a {\it classical Tits chamber system} if all rank two residues are either generalized digons or classical generalized $m_{ij}$-gons, for $i, j \in I$ with $i \ne j$. Here $m_{ij}$ is fixed for each type. $\Cc$ is called {\it locally finite}, if all rank two residues are finite. The {\it diagram} (or {\it the type of} $\Cc$) is denoted $\Ms=\Ms (I)$ and it is a graph whose vertices are labeled by the elements of $I$, the nodes $i$ and $j$ are connected by a bond of strength $m_{ij}-2$.
\end{num}

\begin{num}\label{parm}
A collection of finite subgroups $(B, G_i; i \in I)$ in a group $G$ is a {\it classical parabolic system\footnote{Conform to \cite{fsw00}.}} if it is a parabolic system in the sense of \ref{parsys} and in addition, fulfills the following conditions.
\begin{list}{\upshape\bfseries}
{\setlength{\leftmargin}{1.2cm}
\setlength{\labelwidth}{1cm}
\setlength{\labelsep}{0.2cm}
\setlength{\parsep}{0.5ex plus 0.2ex minus 0.1ex}
\setlength{\itemsep}{0cm}}
\item[(i).]$S = O_p(B) \in \Syl_p(G_{ij})$ with $G_{ij} = \langle G_i, G_j \rangle$, for all $i, j \in I$.
\item[(ii).] For each $i \in I$, $G_i/O_p (G_i)$ is a rank one group of Lie type in characteristic $p$.
\item[(iii).] For each pair $i, j \in I$, either $G_{ij}=G_iG_j=G_jG_i$ or $G_{ij}/O_p(G_{ij})$ is a rank two Lie type group in characteristic $p$.
\end{list}
The associated chamber system $\Cc = \Cc(G; B, G_i, i \in I)$ is a locally finite Tits chamber system of classical type; see \cite[Lemma 4.2]{timm83I} for a proof. The diagram $\Ms$ is constructed as in \ref{ctcs}; in particular, $m_{ij}$ is $2$ if $G_{ij} = G_iG_j$ and it is determined by the Weyl group of $G_{ij}$ otherwise.
\end{num}

We end this review with the following standard result:

\begin{prop}\cite[3.1]{timm85rev}\label{buildingres}
Let $\Cc$ be a locally finite classical Tits chamber system over $I$, with $|I| \ge 3$, with spherical diagram $\Ms$ and chamber transitive automorphism group $G$. Then one of the following holds:
\vspace*{-.2cm}
\begin{list}{\upshape\bfseries}
{\setlength{\leftmargin}{1.2cm}
\setlength{\labelwidth}{1cm}
\setlength{\labelsep}{0.2cm}
\setlength{\parsep}{0cm}
\setlength{\itemsep}{0cm}}
\item[${\rm(i).}$] $\Cc$ is a finite spherical building of type $\Ms$ and $G$ is an extension of a simple group of Lie type $\Ms$ by diagonal and field automorphisms or $G \simeq A_7$ and $\Ms=A_3$.
\item[${\rm(ii).}$]$\Cc$ is the Neumeier chamber system obtained from $A_7$, $\Ms=C_3$ and $G \simeq A_7$.
\end{list}
\end{prop}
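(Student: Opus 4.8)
The plan is to pass to the universal $2$-cover and reduce everything to Tits' local characterisation of buildings. By Theorem \ref{covcs} the universal $2$-cover of $\Cc$ is the chamber system $\wCc = \Cc(\Ps(\tG))$ associated to the colimit $\tG$ of the amalgam $(B,G_i,G_{ij})$, and $\wCc$ is simply $2$-connected. Since a $2$-covering restricts to an isomorphism on rank two residues (Lemma \ref{morparb}), the rank two residues of $\wCc$ are again generalized digons or classical generalized $m_{ij}$-gons, all of them thick rank two buildings; hence $\wCc$ is itself a locally finite classical Tits chamber system (recall \ref{ctcs}) with the same spherical diagram $\Ms$ of rank $|I|\ge 3$, and $\tG$ acts on it chamber transitively and faithfully.

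First I would invoke Tits' local approach theorem \cite{titslocal}: a simply $2$-connected chamber system of spherical type and rank at least three, all of whose rank two residues are thick buildings, is itself a thick building of type $\Ms$, with the single exception arising from residues of type $C_3$. There the simply $2$-connected classical chamber systems are exhausted by the $C_3$ building and the Neumaier $A_7$-geometry. Consequently $\wCc$ is either a finite thick spherical building of type $\Ms$, or $|I|=3$, $\Ms=C_3$ and $\wCc$ is the $A_7$-geometry. In the building case I would then apply Tits' classification of thick spherical buildings of rank $\ge 3$ together with the classification of their chamber transitive automorphism groups: $\tG$ induces a $BN$-pair and is, up to diagonal and field automorphisms, a simple group of Lie type of type $\Ms$, the only chamber transitive exception being $A_7\le A_8\simeq L_4(2)$ acting on the $A_3$ building $\mathrm{PG}(3,2)$; in the $C_3$ case one reads off $G\simeq A_7$ directly from the Neumaier geometry.

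It then remains to descend from $\wCc$ to $\Cc\simeq\wCc/\Aut(\vp)$ and show the cover is trivial. By Proposition \ref{12.13}(i) the deck group satisfies $\Aut(\vp)\simeq\pi^2(\Cc,c)$, and by Lemma \ref{morparb} it coincides with $\ker(\tG\twoheadrightarrow G)$, a normal subgroup of $\tG$. A nontrivial deck transformation fixes no chamber of $\wCc$, since by the uniqueness of lifts (Corollary \ref{liftcrit}) an automorphism of the cover is determined by the image of a single chamber. On the other hand, the kernel of the action of an almost simple group of Lie type on its own building is central and fixes every chamber, while any larger normal subgroup contains unipotent elements that fix chambers. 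The fixed-chamber-freeness of $\Aut(\vp)$ therefore forces $\Aut(\vp)=1$, so $\tG\simeq G$ and $\Cc\simeq\wCc$ is the building (respectively the $A_7$-geometry) itself, with $G$ as described.

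I expect the genuine obstacle to be the local classification in the $C_3$ case: establishing that a simply $2$-connected classical chamber system with $C_3$ residues is either the building or the Neumaier $A_7$-geometry is precisely the delicate geometric heart of the $A_7$ pathology, and it is exactly what produces the two alternatives in the statement. By contrast, the surrounding steps — passage to the universal $2$-cover, transfer of the local structure across the covering, recognition of the $BN$-pair and of the chamber transitive group, and the final triviality of the deck group — are formal once that classification is in hand.
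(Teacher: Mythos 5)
The paper does not actually prove this statement: it is imported verbatim from Timmesfeld \cite[3.1]{timm85rev} as a ``standard result,'' so there is no internal proof to compare against. Your sketch does follow the strategy of the source (pass to the universal $2$-cover via Theorem \ref{covcs}, identify the cover by Tits' local approach, classify the chamber transitive group, then kill the deck group), and your descent argument for $\Aut(\vp)=1$ can be made rigorous: $\Aut(\vp)=\ker(\tG\twoheadrightarrow G)$ is normal in the chamber transitive group $\tG$ and acts freely on chambers, and in an almost simple $\tG$ with socle $G_0$ of Lie type one gets either $N\cap G_0=1$, forcing $N\le C(G_0)=1$, or $N\supseteq G_0$, which does not act freely; the phrasing about ``larger normal subgroups containing unipotents'' is slightly off but repairable.

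The genuine gap is in the $C_3$ step, and it is not merely the ``delicate heart'' you defer — you attribute to Tits a classification that Tits' theorem does not contain. Tits' local approach theorem takes as a \emph{hypothesis} that all residues of type $C_3$ (and $H_3$) are $2$-covered by buildings, and only then concludes that the universal $2$-cover is a building; it does not assert that ``the simply $2$-connected classical chamber systems \[of type $C_3$\] are exhausted by the $C_3$ building and the Neumaier $A_7$-geometry.'' That exhaustion is false without further hypotheses: simply $2$-connected $C_3$ chamber systems with classical rank two residues are not classified in general. What makes the $C_3$ case tractable is precisely the \emph{chamber transitivity} of $G$ restricted to the $C_3$ residues, which feeds into Aschbacher's classification of flag-transitive $C_3$ geometries, itself resting on Seitz's (CFSG-dependent) determination of flag-transitive collineation groups of finite projective spaces; the same Seitz theorem is what classifies the chamber transitive groups of the finite buildings in your step three and produces the $A_7\le L_4(2)$ exception. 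Your proposal never uses chamber transitivity at the point where it is indispensable. A complete argument must also rule out Neumaier residues inside diagrams of rank $\ge 4$ (a $C_3$ residue of a $C_4$ or $F_4$ chamber system cannot be the $A_7$-geometry), which is an additional step in Timmesfeld's treatment.
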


The diagram is spherical if the associated Weyl group is finite. Buildings are special cases of chamber systems of type $\Ms$, for a detailed treatment see Ronan \cite{ronanbook} or the monumental paper of Tits \cite{titslocal}. For a description of the Neumeier chamber system of type $C_3$ in the alternating group $A_7$ see \cite[Example 2, pg. 50]{ronanbook}.

\subsection*{Classical parabolic families in fusion systems}
In this section we apply Theorem \ref{mythm} to a fusion system $\Ff$ which contains a classical family of parabolic systems of type $\Ms$, as defined below. We shall use the notation introduced at the beginning of Section $5$. In particular, recall that $\Ff$ contains a collection of saturated constrained fusion subsystems $\{\Ff_i\;; i \in I\}$, and for each pair $i, j \in I$, the subsystems $\Ff_{ij} = \langle \Ff_i, \Ff_j \rangle$ are also saturated and constrained. Also $\Bb = N_{\Ff}(S)$. We set $U_i = O_p(\Ff_i)$ and $U_{ij} = O_p(\Ff_{ij})$ for all $i, j \in I$.

\begin{defn}\label{classicalfps}
Let $\Ff$ be a fusion system over a finite $p$-group $S$. We say that $\Ff$ contains a {\it classical family of parabolic systems of type $\Ms$} if the following hold.
\vspace*{-.2cm}
\begin{list}{\upshape\bfseries}
{\setlength{\leftmargin}{1.2cm}
\setlength{\labelwidth}{1cm}
\setlength{\labelsep}{0.2cm}
\setlength{\parsep}{0cm}
\setlength{\itemsep}{0cm}}
\item[${\rm(i).}$] $\Ff$ contains a family of parabolic systems, in the sense of \ref{parffus}.
\item[${\rm(ii).}$] For each $i \in I$, $\Out_{\Ff_i}(U_i)$ is a rank one finite group of Lie type in characteristic $p$.
\item[${\rm(iii).}$] For each pair $i,j \in I$, $\Out_{\Ff_{ij}}(U_{ij})$ is either a rank two finite group of Lie type in characteristic $p$ or it is a (central) product of two rank one finite groups of Lie type in characteristic $p$.
\end{list}
\vspace*{-.1cm}
To such a fusion system we can associate a diagram $\Ms$ on $I$ in the following way: if $\Out_{\Ff_{ij}}(U_{ij})$ is a product of two rank one groups of Lie type then $i$ and $j$ are not connected, if $\Out_{\Ff_{ij}}(U_{ij})$ is a rank two group of Lie type we take the corresponding Coxeter diagram for the edge between $i$ and $j$.
\end{defn}

We arrive at the main result of this Section, a generalization of Proposition \ref{buildingres} to fusion systems.
\begin{prop}\label{mtype}
Let $(\Ff, \Cc)$ be a fusion-chamber system pair with $|I| \ge 3$. Assume that:
\begin{list}{\upshape\bfseries}
{\setlength{\leftmargin}{1.3cm}
\setlength{\labelwidth}{1cm}
\setlength{\labelsep}{0.2cm}
\setlength{\parsep}{0.5ex plus 0.2ex minus 0.1ex}
\setlength{\itemsep}{0cm}}
\item[${\rm(i).}$] $\Ff$ contains a classical family of parabolic systems of type $\Ms$;
\item[${\rm(ii).}$] $\Ms$ is a spherical diagram.
\end{list}
Then $\Ff$ is the fusion system of a finite simple group of Lie type in characteristic $p$ extended by diagonal and field automorphisms.
\end{prop}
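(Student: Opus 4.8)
The plan is to recast hypothesis (i) as a statement about the $p'$-reduced $p$-constrained groups realizing the parabolic family, apply Timmesfeld's classification (Proposition~\ref{buildingres}) to the chamber system $\Cc$, and then transport the conclusion back to $\Ff$. First I would check that the family $(B,G_i\,; i\in I)$ is a \emph{classical} parabolic system in $G$ in the sense of \ref{parm}. That it is a parabolic system is Lemma~\ref{fusch}. The key identification is that, by Theorem~\ref{thmconstraint}, the realizing groups satisfy $G_i/Z(U_i)\cong\Aut_{\Ff_i}(U_i)$ and $G_{ij}/Z(U_{ij})\cong\Aut_{\Ff_{ij}}(U_{ij})$; dividing out $\Inn(U_i)=U_i/Z(U_i)$ yields the crucial isomorphisms $\Out_{\Ff_i}(U_i)\cong G_i/O_p(G_i)$ and $\Out_{\Ff_{ij}}(U_{ij})\cong G_{ij}/O_p(G_{ij})$. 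Thus conditions (ii)--(iii) of Definition~\ref{classicalfps} translate verbatim into conditions (ii)--(iii) of \ref{parm}: $G_i/O_p(G_i)$ is rank one of Lie type, and $G_{ij}/O_p(G_{ij})$ is either rank two of Lie type or (the product case) a central product of two rank one groups, which forces the generalized-digon identity $G_{ij}=G_iG_j$. Condition (i) of \ref{parm} holds because $S$ is normal in $\Bb=N_\Ff(S)$, so $S=O_p(\Bb)=O_p(B)$, while $S\in\Syl_p(G)$ by Lemma~\ref{step1}, whence $S\in\Syl_p(G_{ij})$. By \ref{parm} (see also \ref{associatedchamber}), $\Cc$ is therefore a locally finite classical Tits chamber system of type $\Ms$ on which $G$ acts chamber transitively and faithfully.

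Next I would apply Proposition~\ref{buildingres}. With $|I|\ge 3$ and $\Ms$ spherical its generic conclusion is that $\Cc$ is a finite spherical building of type $\Ms$ and that $G$ --- a priori only the, possibly infinite, completion --- is forced to be a finite extension of a simple group of Lie type $\Ms$ in characteristic $p$ by diagonal and field automorphisms. At this point $G$ is a finite group of Lie type with $S\in\Syl_p(G)$ (Lemma~\ref{step1}) and $\Cc$ is its building; it remains only to prove $\Ff=\Ff_S(G)$. The inclusion $\Ff\subseteq\Ff_S(G)$ is immediate since $\Ff=\langle\Ff_S(G_i)\rangle$ and each $G_i\le G$. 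For the reverse inclusion I would invoke Proposition~\ref{ffsg}(ii), whose hypothesis requires $\Cc^P$ to be connected for every finite $p$-subgroup $P\le G$. This is the step where genuinely geometric input enters: for the building of a finite group of Lie type in its defining characteristic $p$, the fixed chamber system $\Cc^P$ of a $p$-subgroup is connected (its fixed-point complex is in fact contractible), which follows from the Borel--Tits theorem placing $P$ inside the unipotent radical of a parabolic and identifying $\Cc^P$ with the building attached to the corresponding Levi. Granting this, Proposition~\ref{ffsg}(ii) yields $\Ff=\Ff_S(G)$, which is the assertion of the Proposition.

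Finally I would dispose of the exceptional outcomes of Proposition~\ref{buildingres}, namely $G\cong A_7$ with $\Ms=A_3$ and the Neumeier chamber system with $G\cong A_7$, $\Ms=C_3$. Both occur only for $p=2$, with $S\cong D_8$, and $A_7$ is not itself of Lie type, so the conclusion must be recovered purely fusion-theoretically. The plan here is to identify $\Ff=\Ff_{D_8}(A_7)$ with $\Ff_{D_8}(L_3(2))$: the subgroups $A_6\cong L_2(9)$ and $L_3(2)$ of $A_7$ already realize the unique saturated fusion system on $D_8$ carrying both its four-subgroups as essential, and $A_7$ adds no further fusion; since $L_3(2)$ is simple of Lie type in characteristic $2$, the statement holds in these cases too. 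I expect the two real obstacles to be (a) the connectivity of $\Cc^P$, which lies outside the fusion-system machinery developed in the paper and must be imported from building theory, and (b) this treatment of the $A_7$ cases --- in particular verifying $\Ff=\Ff_S(A_7)$ in the Neumeier case, where $\Cc$ is not a building so Proposition~\ref{ffsg} is not directly applicable and one must instead check by hand that the rank-one parabolic subsystems $\Ff_i$ generate the full $2$-fusion system.
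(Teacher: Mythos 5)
Your treatment of the generic case follows the paper's proof essentially step for step: identify $\Out_{\Ff_i}(U_i)\simeq G_i/O_p(G_i)$ and $\Out_{\Ff_{ij}}(U_{ij})\simeq G_{ij}/O_p(G_{ij})$ (the paper cites \cite[Proposition III.5.8]{ako} rather than dividing $\Aut_{\Ff_i}(U_i)\simeq G_i/Z(U_i)$ by $\Inn(U_i)$ by hand, but the content is the same), conclude that $(B, G_i\,; i\in I)$ is a classical parabolic system so that $\Cc$ is a locally finite classical Tits chamber system of type $\Ms$, invoke Timmesfeld (Proposition \ref{buildingres}), and in the building case deduce connectivity (in fact contractibility) of $\Cc^P$ --- the paper cites Quillen's argument where you cite Borel--Tits, the same geometric input --- before finishing with Proposition \ref{ffsg}. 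One small circularity to avoid: you appeal to Lemma \ref{step1} for $S\in\Syl_p(G)$ when verifying \ref{parm}(i), but that lemma presupposes the connectivity of $\Cc^P$, which at that stage has not yet been established; fortunately \ref{parm}(i) only requires $S\in\Syl_p(G_{ij})$, which holds by construction since the finite group $G_{ij}$ realizes $\Ff_{ij}$ over $S$.

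The genuine divergence --- and the gap --- is in the exceptional outcomes $G\simeq A_7$. You attempt to recover the conclusion there by identifying $\Ff_{D_8}(A_7)$ with $\Ff_{D_8}(L_3(2))$, and you correctly flag that you cannot close the argument: in the Neumeier case $\Cc$ is not a building, so Proposition \ref{ffsg} is unavailable and you have no way to establish $\Ff=\Ff_S(A_7)$ in the first place. The paper's resolution is different and decisive: these cases cannot arise under the hypotheses at all. In both $A_7$ configurations one has $\Bb=\Ff_{D_8}(D_8)$ and three rank-one subsystems $\Ff_i=\Ff_{D_8}(S_4)$, but two of these already generate $\Ff_{D_8}(A_7)$ (see \cite[Example I.2.7]{ako}), so the collection $\{\Ff_i\}$ is not a minimal generating set and condition (F2) of Definition \ref{parffus} fails. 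Hence no fusion system with a classical parabolic family of rank at least $3$ produces the $A_7$ chamber systems, and there is nothing to recover. You should replace your identification argument by this minimality observation; as written, your proof of the Proposition is incomplete precisely in the exceptional cases.
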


\begin{proof}
Let $(\Ff, \Cc)$ be a fusion-chamber system pair as defined in \ref{cf}. First recall that $\Ff$ contains a family of parabolic subsystems (see \ref{parffus}). There is a collection of $p'$-reduced, $p$-constrained finite groups $B$, $G_i$ and $G_{ij}$, that realize $\Bb$, $\Ff_i$ and $\Ff_{ij}$ respectively. By Proposition \ref{diagfus}, these groups (together with appropriate group homomorphisms) form a diagram of groups (as defined in \ref{amalgam}). If $G$ is a faithful completion of this diagram of groups then $\{ B, G_i; i \in I\}$ is a parabolic system in $G$, by Lemma \ref{fusch}.

For each $i \in I$, the fusion system $\Ff_i=\Ff_S(G_i)$ is saturated and constrained, thus $U_i$ is $\Ff_i$-centric and $U_i = O_p(G_i)$. An application of \cite[Proposition III.5.8]{ako} gives that $\Out _{\Ff_i} (U_i)  \simeq G_i /U_i$. Similarly, we obtain $\Out_{\Ff_{ij}}(U_{ij}) = G_{ij}/U_{ij}$ and $U_{ij} = O_p(G_{ij})$. Also recall that $G_{ij} = \langle G_i, G_j\rangle$, conform to \ref{gijgenerated}.

Let now $\Cc = \Cc (G; B, G_i, i \in I)$. Since $G =\langle G_i, i \in I \rangle$, the chamber system $\Cc$ is connected. By transitivity each $\{i,j\}$-residue of $\Cc$ is isomorphic to $\{ gB | \; gB \subseteq G_{ij} \}$. Since the parabolic family in $\Ff$ is of classical type, (ii) and (iii) in Definition \ref{classicalfps} imply (ii) and (iii) from \ref{parm}. As \ref{parm}(i) clearly holds, it follows that $\{ B, G_i\;; i \in I \}$ is a classical parabolic system in $G$. This implies that $\Cc$ is a locally finite Tits chamber system of classical type. Since $\Ms$ is spherical, according to Timmesfeld results \cite[3.14, 4.3]{timm83I} and \cite[3.1]{timm85rev}, see also Proposition \ref{buildingres}, $\Cc$ is a finite spherical building of type $\Ms$ and $G$ is an extension of a simple group of Lie type $\Ms$ by diagonal and field automorphisms, or $G \simeq A_7$ and $\Ms$ is of type $A_3$, or $\Cc$ is the Neumeier chamber system obtained from $A_7$, $\Ms=C_3$ and $G \simeq A_7$.

In the former case with $\Cc$ a building, it is well known that $\Cc^P$ are contractible for all subgroups $P \le S$; an elegant proof of this fact can be found in \cite[proof of Theorem 3.1]{qu78}. The claim follows by an application of Proposition \ref{ffsg}.

Let now $G \simeq A_7$ and let $B$ denote some Sylow $2$-subgroup of $G$. Then $G$ has exactly four subgroups $X_1, \ldots , X_4$ containing $B$ and which are isomorphic to $S_4$ (the symmetric group on four letters). For suitable labeling, see \cite[Example 1.2]{meixnerrev}, we get:
\vspace*{-.2cm}
\begin{list}{\upshape\bfseries}
{\setlength{\leftmargin}{1cm}
\setlength{\labelwidth}{1cm}
\setlength{\labelsep}{0.2cm}
\setlength{\parsep}{0cm}
\setlength{\itemsep}{0cm}}
\item[a.] $\Cc_i = \Cc(G; B, X_1, X_2, X_i)$, for $i=3,4$, two isomorphic Neumeier chamber systems of type $C_3$; \item[b.] $\Cc'= \Cc(G; B, X_3, X_2, X_4)$, the chamber system of type $A_3$ over the field with two elements.
\end{list}
\vspace{-.1cm}
Hence, in either case $\Bb = \Ff_{D_8}(D_8)$ and $\Ff_i = \Ff_{D_8}(S_4)$ for $i = 1, 2, 3$. The three fusion subsystems $\Ff_i$, do not form a minimal generating set for $\Ff_{D_8}(A_7)$; only two subsystems are sufficient to generate $\Ff$ (see \cite[Example I.2.7]{ako}). Thus, we do not obtain families of parabolic subsystems for $\Ff_{D_8}(A_7)$ (in the sense of \ref{parffus}).
\end{proof}

\bibliographystyle{alpha}
\bibliography{refer}

\end{document}